\documentclass[12pt]{article}
\ifdefined\kanjiskip
  \PassOptionsToPackage{dvipdfmx}{color}
  \PassOptionsToPackage{dvipdfmx}{graphicx}
  \PassOptionsToPackage{dvipdfmx}{hyperref}
\fi
\usepackage{mathrsfs,amssymb,amsmath,
amsthm, color, graphicx }
\usepackage{bm}
\usepackage{url} 
\usepackage{hyperref} 
\IfFileExists{orcidlink.sty}{%
  \usepackage{orcidlink}%
}{%
  \definecolor{orcidlogocol}{RGB}{166,206,57}%
  \DeclareRobustCommand{\orcidlink}[1]{%
    \texorpdfstring{%
      \href{https://orcid.org/##1}{%
        \raisebox{-0.10ex}{%
          \textcolor{orcidlogocol}{%
            \textcircled{\raisebox{0.10ex}{\scriptsize\sffamily\bfseries iD}}%
          }%
        }%
      }%
    }{ORCID~##1}%
  }%
}
\usepackage{appendix}
\newtheorem{tm}{Theorem}[section]
\newtheorem{lm}[tm]{Lemma}
\newtheorem{co}[tm]{Corollary}
\newtheorem{re}[tm]{Remark}
\newtheorem{df}[tm]{Definition}
\newtheorem{exm}[tm]{Example}
\newtheorem{pr}[tm]{Proposition}

\newcommand{\mc}[1]{{\mathcal #1}}
\newcommand{\mf}[1]{{\mathfrak #1}}

\newcommand{\bb}[1]{{\mathbb #1}}

\newcommand{\E}{\mathbb{E}}

\newcommand{\1}{\mathbf{1}}
\newcommand{\Mplus}{\mathcal{M}_+(M)}

\newcommand{\R}{\mathbb{R}}
\newcommand{\supp}{\operatorname{supp}}

\newcommand{\lan}{\langle}
\newcommand{\ran}{\rangle}
\renewcommand{\hat}{\widehat}
\renewcommand{\tilde}{\widetilde}

\allowdisplaybreaks[4]

\makeatletter
\newcommand{\subscripts}[3]{%
  \@mathmeasure\z@\displaystyle{#2}%
  \global\setbox\@ne\vbox to\ht\z@{}\dp\@ne\dp\z@
  \setbox\tw@\box\@ne
  \@mathmeasure4\displaystyle{\copy\tw@_{#1}}%
  \@mathmeasure6\displaystyle{{#2}_{#3}}%
  \dimen@-\wd6 \advance\dimen@\wd4 \advance\dimen@\wd\z@
  \hbox to\dimen@{}\mathop{\kern-\dimen@\box4\box6}%
}
\makeatother

\newcommand{\III}{{\vert \kern-.10em \vert \kern-.10em \vert}}

\makeatletter
 
 \@addtoreset{equation}{section}
\makeatother
 
\begin{document}
\setlength{\baselineskip}
{15.5pt}
%
\allowdisplaybreaks

\title{
Hydrodynamic limit of 
exclusion processes with 
\\
killing
on weighted 
Riemannian 
manifolds\\
via a graph discretization
}
\author{\large
{Satoshi Ishiwata\hspace{1mm}\footnote{Department of Mathematical Sciences, 
Faculty of Science, Yamagata University,
1-4-12, Kojirakawa, Yamagata 990-8560, Japan
(e-mail: {\tt ishiwata@sci.kj.yamagata-u.ac.jp})},\,
Hiroshi Kawabi\,\orcidlink{0009-0006-7128-206X}
\hspace{-1mm}\footnote{Department of Mathematics, Hiyoshi Campus, Keio University,
4-1-1, Hiyoshi, Kohoku-ku, Yokohama, Kanagawa 223-8521, Japan
(e-mail: {\tt{kawabi@keio.jp}})}\,
and Kenkichi Tsunoda\hspace{1mm}\footnote{Faculty of Mathematics, Kyushu University,
744 Motooka, Nishi-ku, Fukuoka 819-0395, Japan (e-mail: {\tt{tsunoda@math.kyushu-u.ac.jp}})}
}}
\maketitle
\begin{abstract}
In the present paper, we consider an exclusion process with killing
on a proximity graph constructed from a partition of a geodesically 
complete weighted Riemannian manifold.
We rigorously derive its hydrodynamic equation, which is governed by 
a heat equation with a killing potential term on the manifold.
Within our graph discretization framework, we define an empirical density field
for the exclusion process and prove its convergence, under an appropriate space-time scaling,
to the unique bounded 
weak solution of the hydrodynamic equation, provided that 
the weighted manifold is stochastically complete.
Combining local regularity for the parabolic equation with
techniques from stochastic analysis on manifolds, we prove 
the uniqueness of the bounded weak solution and give the explicit 
representation of the solution in terms of the minimal Schr{\"{o}}dinger kernel.
Our result requires neither a
global lower Ricci curvature bound nor global boundedness of the weight function. 
\vspace{0mm} \\
{\bf{2020 Mathematics Subject Classification:}}~60K35, 47D08, 58J65, 05C81.
\vspace{1mm} \\
{\bf{Keywords:}}~Hydrodynamic limit, Exclusion process, Weighted Riemannian manifold, Graph discretization.
\end{abstract}
\section{Introduction}
The {\it{hydrodynamic limit}} for interacting particle systems plays a fundamental role 
in understanding
the relation between microscopic and macroscopic phenomena in 
statistical physics and hydrodynamics.
Roughly speaking, deriving the hydrodynamic limit means deducing the macroscopic
behavior of the system from its microscopic interacting-particle dynamics. It can be
regarded as a law of large numbers for the stochastic dynamics of many interacting
particles under an appropriate space-time scaling.
Since the limiting macroscopic behavior is 
governed by a deterministic evolution equation, 
which is 
called the {\it{hydrodynamic equation}}, 
the hydrodynamic limit is of interest
not only in physics but also in many branches of mathematics, such as
probability theory, partial differential equations, and geometric analysis.
Indeed, this theme has been studied extensively by many authors
(see e.g., \cite{Spo91, KL99, Sep08, Fun18} and references therein for further related results).

The \textit{exclusion process} is one of the most fundamental classes of
interacting particle systems. It describes a system of random walks on a
discrete state space subject to the \textit{exclusion rule}, namely, each site
can be occupied by at most one particle. In the standard setting of the lattice
$\mathbb Z^d$ with nearest-neighbor symmetric jumps, the hydrodynamic limit is
given, under diffusive space-time scaling, by the heat equation on Euclidean
space. Much of the classical theory has therefore been developed for particle
systems on $\mathbb Z^d$ or on the discrete torus $(\mathbb Z/N\mathbb Z)^d$,
with limiting partial differential equations posed on $\mathbb R^d$ or on the
flat torus $\mathbb T^d$.
We refer to \cite{KL99} and \cite{Sep08}
for the hydrodynamic limit of exclusion processes
on $\mathbb T^d$ and $\mathbb R^d$, respectively.

It is natural, both from physical motivation and from the viewpoint of geometry,
to investigate the scaling limits of interacting particle systems evolving on more
general spaces, such as Riemannian manifolds or fractals, and to clarify how the
macroscopic behavior of particles reflects the geometry of the underlying
space. Several works have addressed hydrodynamic limits on non-Euclidean or
inhomogeneous structures. 
Jara \cite{Jar09}, Chen and Gon\c{c}alves \cite{CG21} and
van Meurs and the third author \cite{vMT26} studied hydrodynamic limits
on the Sierpi\'{n}ski gasket, while
Tanaka \cite{Tan12} and Guan \cite{Gua23} studied exclusion processes on
crystal lattices using discrete geometric analysis initiated by Kotani and
Sunada \cite{KS00, Sun08, Sun13}. Hydrodynamic limits for symmetric exclusion processes in inhomogeneous
media were investigated by Faggionato \cite{Fag10} and Jara \cite{Jar11}. In
the Riemannian manifold setting, van Ginkel and Redig \cite{GR20, GR22} studied the
hydrodynamic limit and equilibrium fluctuations for the symmetric exclusion
process on closed manifolds. More recently, Junn\'{e}, Redig and Versendaal
\cite{JRV24} extended the hydrodynamic-limit result of \cite{GR20} to complete
non-compact manifolds. 

In the present paper, we consider an exclusion process with {\it{killing}} on a
connected, geodesically complete, possibly non-compact weighted Riemannian manifold $M$.
A fundamental difficulty is that a general manifold need not admit a compatible group action
and therefore has no canonical periodic structure.
Consequently, those $\mathbb Z^d$ results cannot be applied directly, and
even before taking the scaling limit, one has to choose a
suitable discretization of the manifold and formulate an exclusion process on
that discretized object. Following the earlier work \cite{IK24} of the
first two authors, we approximate $M$ by a sequence of connected proximity
graphs. Their vertices are constructed by partitions that cut $M$ into small
pieces, and their edges encode proximity between these pieces. 
Their vertices are the cells of partitions of $M$ and their edges encode proximity between these cells.
We then define a family of random walks with killing on these proximity
graphs and use them to construct the corresponding exclusion processes.
Microscopically, the jump to the cemetery state represents irreversible particle loss
caused by spatially inhomogeneous traps, absorbing defects, or sinks.
Under the diffusive scaling, the geometry and the weight determine transport through the weighted
Laplacian, whereas the local removal mechanics produces the potential term $-V(x)u$.
Therefore, this model describes diffusion with heterogeneous absorption on a curved and
inhomogeneous medium.

Our main result (Theorem \ref{mainthm}) states that, under the additional assumption that the weighted
manifold $M$ is {\it{stochastically complete}} (see Section \ref{manifold setting}), 
a family of empirical density fields of these
exclusion processes converges, under an appropriate space-time scaling, to the unique
bounded weak solution of the heat equation with a killing potential term on $M$.
This means that our result connects microscopic particle systems with annihilation to
macroscopic diffusion with killing on curved spaces.
Note that geodesic completeness alone does not in general imply stochastic completeness.
See e.g., Grigor'yan \cite{Gri09} for details.
The passage from the random walk approximation established in
\cite{IK24} to the interacting particle system considered here is not a
direct consequence of that work. Actually, several additional difficulties arise.
The transition probabilities of the random walk on the proximity graph are
generally non-symmetric. The graph is only locally finite, and its vertex
degrees need not be uniformly bounded. Moreover, the unweighted
particle-counting field does not have the normalization required for the
continuum limit. To overcome these difficulties, we construct an exclusion
process as a solution to a martingale problem. We also introduce a canonical
empirical density field whose site weights are determined by the reversible
measure associated with the random walk in the absence of
killing. This choice gives the correct continuum normalization. Through the
corresponding detailed-balance identity, it also transforms the quadratic
occupation term generated by the exclusion rule into an anti-symmetric sum,
which cancels exactly. The locally almost equi-partition condition controls
the remaining discrepancy between the weights of neighboring cells. Finally,
under the diffusive scaling, the microscopic killing mechanism appears in
the hydrodynamic equation as the potential term $-V(x)u$.

We now compare our result with the recent work  \cite{JRV24}, which also studies
hydrodynamic limits for exclusion processes on the non-compact weighted manifold $M$.
Although the macroscopic equation obtained there is closely related to ours, 
the microscopic construction is quite different. In \cite{JRV24}, the authors use random
neighborhood graphs whose vertices are given by a Poisson point process on $M$, and 
an edge connects two vertices whenever they are close. Equipping
these random graphs with symmetric edge weights, they consider a family of symmetric 
exclusion processes and finally derive the heat equation on $M$ as the hydrodynamic equation 
under a lower Ricci curvature bound, boundedness of the weights, and stochastic completeness. 
By contrast, our graph
approximation is based on the proximity graph discretization associated with
partitions of $M$, and even in the absence of killing,
our exclusion processes do not fall within the class of symmetric exclusion processes 
(see Remark \ref{rem-sym} for details).

There are also three further important differences. 
First, in our setting, there are no restrictions on the Ricci curvature or 
the weight of the manifold.
Second, the killing mechanism, which is
one of the main features of our model, is not treated in \cite{JRV24}.
It should be noted that 
the killing term can be removed by using Doob's $h$-transform (see \cite{Gri06})
in the macroscopic heat equation on the manifold. 
However, it is difficult to obtain the harmonic function $h$ explicitly in general, and hence 
the transform makes the bounded Cauchy problem complicated.
(Instead of the class of bounded functions, the Cauchy problem should be studied in the class
of functions bounded by a constant multiple of $h$.)
Moreover, as far as we know, 
there are no corresponding transforms in the microscopic discrete setting.
Third, in \cite{JRV24}, the authors impose in advance the uniqueness 
of the bounded weak solution of the heat equation to identify all limit
points of empirical density fields under an appropriate space-time scaling.
By contrast, in Section \ref{uni} below, we provide a self-contained proof
of the required uniqueness of bounded weak solutions under the
stochastic completeness assumption. This uniqueness result (Theorem \ref{thm:uniq})
allows us to complete the identification of the hydrodynamic
limit for the heat equation with the potential term $-V(x)u$.

The paper is organized as follows:
In Section 2, we introduce the setting of the paper. We first recall basic facts about weighted Riemannian manifolds and the associated heat and Schr\"{o}dinger kernels. We then review the proximity graph discretization introduced in \cite{IK24}. On the resulting graphs, we define random walks with killing and the transition operator for the exclusion dynamics. 
A key feature of this section is the introduction of three empirical density fields, namely the canonical empirical measure, an empirical measure in which each partition element is weighted by its volume under the square root of the given weight function, and the unweighted empirical measure. We explain the role of each field in the Riemannian setting and how its normalization is related to the corresponding continuum measure. We then formulate the hydrodynamic equation and introduce the notion of a bounded weak solution. At the end of the section, we state the uniqueness theorem 
for bounded weak solutions and present our main results on the hydrodynamic limit.

Section 3 is devoted to the proof of Theorem \ref{mainthm}. In Section 3.1, we establish preliminary estimates for the graph discretization and the discrete generator. In Section 3.2, we construct the exclusion process as a solution to the martingale problem for the transition operator $\mathcal L_{\alpha,\beta}$. This construction is needed because the proximity graph is only locally finite and the number of neighboring pieces is not assumed to be uniformly bounded. In Section 3.3, we derive Dynkin's martingale decomposition for the empirical density field and estimate its predictable quadratic variation. In Section 3.4, we prove tightness in the Skorokhod space $D([0,T],{\mathcal M}_{+}(M))$ and characterize every limit point as a bounded weak solution of the hydrodynamic equation. In Section 3.5, we complete the proof of Theorem \ref{mainthm} by applying the uniqueness result proved in Section 4. We also obtain the hydrodynamic limits for the other empirical density fields introduced in Section 2.

Section 4 provides a self-contained proof of the uniqueness of bounded weak solutions to the heat equation with killing. We first establish local parabolic regularity for bounded weak solutions. We then prove a Feynman--Kac type formula by using stochastic analysis on the weighted manifold. The stochastic completeness assumption allows us to remove the stopping times appearing in this formula. Finally, we prove Theorem \ref{thm:uniq} and obtain the representation of the solution in terms of the minimal Schr\"{o}dinger kernel.
\vspace{2mm} \\
\noindent
{\textbf{Notation.}} We collect here some basic notation which will be used frequently 
throughout the present paper.
\begin{itemize}
\vspace{-2mm}
\item[$\bullet$] We use $c$ and
$C$
to denote positive constants which may change from line to line. 
We also use the Landau symbols 
$O(\cdot)$ and $o(\cdot)$. If the dependence of $C$, $O(\cdot)$, and 
$o(\cdot)$ is significant, we indicate it by writing $C_A$, $O_{A}(\cdot)$, and $o_{A}(\cdot)$, 
respectively. Here $A$ may be a number or a subset of the space under consideration.
\vspace{-2mm}
\item[$\bullet$]
For a set $A$, we denote by $\sharp A$ the cardinality of $A$.
\vspace{-2mm}
\item[$\bullet$]
For $a\in \mathbb R$, we denote by $[a]$
the greatest integer less than or equal to $a$.

\vspace{-2mm}
\item[$\bullet$]
Unless otherwise specified, we use the Einstein summation convention, 
which means that an index variable that appears twice in an expression is 
implicitly summed over all its possible values.
\vspace{-2mm}
\item[$\bullet$]
For a fixed time $T>0$ and a metric space $S$, we denote by 
$C([0,T], S)$ the space of continuous $S$-valued maps defined on $[0,T]$.
Moreover, we denote by $D([0,T], S)$ the space of $S$-valued maps defined on 
$[0,T]$ which are right-continuous with left limits.
\vspace{-2mm}
\item[$\bullet$] For the geodesically complete Riemannian manifold $M$, we denote by
$B_{b}(M)$, $C_{0}(M)$, $C_{c}(M)$ and $C^{\infty}_{c}(M)$ the
spaces of bounded Borel measurable functions, continuous functions vanishing at infinity, 
continuous functions with compact support, and 
smooth functions with compact support on $M$, respectively. 
These spaces will be equipped with the sup norm $\Vert \cdot \Vert_{\infty}$.
\vspace{-2mm}
\item[$\bullet$] For a Borel measure $\mu$ on $M$ and $p\geq 1$, we denote by $L^{p}(\mu)(=L^{p}(M,\mu))$
the usual $L^{p}$-space associated with $\mu$ equipped with the $L^{p}$-norm $\Vert \cdot \Vert_{L^{p}(\mu)}$.
\vspace{-2mm}
\item[$\bullet$] We denote by ${\mathcal M}_{+}(M)$ 
the set of non-negative Radon measures on $M$
equipped with the vague topology. 
Here we say that a sequence $\{\pi_{n} \}_{n=1}^{\infty} \subset
{\cal M}_{+}(M)$ converges to $\pi \in {\cal M}_{+}(M)$ in the vague topology if
$\lim_{n\to \infty} \langle \pi_{n}, \varphi \rangle=
\langle \pi, \varphi \rangle$ for all
$\varphi\in C_{c}(M)$, where $\langle \pi, \varphi \rangle:= \int_M \varphi(x) \pi(dx)$.
\end{itemize}
\section{Model and main results}
In this section, we first fix the notation of differential geometric objects and review a graph discretization introduced 
in \cite{IK24}. After these preparations, we introduce exclusion processes with killing and state our main results.
\subsection{Weighted Riemannian manifold}
\label{manifold setting}
Let $M$ be a smooth $d$-dimensional 
Riemannian manifold equipped with a Riemannian metric $g$.
We assume that $M$ is geodesically complete and connected, but not necessarily compact.
Let $\{\mu_{\alpha} \}_{\alpha \geq 0}$ be a family of Borel measures on $M$ defined
by $\mu_{\alpha}(dx):=w^{\alpha}(x) {\rm vol}_{g}(dx)$,
where $w$ is a smooth positive function on $M$ and 
${\rm vol}_{g}$ is the Riemannian volume measure on $M$. 
We work with the framework of a {\it{weighted}} Riemannian manifold 
$M=(M, g, \mu_{\alpha})$ throughout the present paper.
For notational simplicity, we usually write it as $(M, \mu_{\alpha})$.
We denote by 
$d(x,y)$, $x,y\in M$, the geodesic distance between $x$ and $y$, 
and by $B(x,r)$ the open geodesic
ball of radius $r>0$ centered at $x\in {M}$. 
For any subset $A$ of $ M$, we define 
${\rm{diam}}(A):=\sup_{x,y\in A}d(x,y)$ and 
denote by $U_{r}(A)$ 
the $r$-neighborhood of $A $, i.e., 
\begin{equation*}
U_r(A)=\bigcup_{x \in A} B(x,r).
\end{equation*}
For two subsets $A$ and $B$ of 
${M}$, we define the {\it{Hausdorff distance}} $d_{H}(A,B)$
between $A$ and $B$ by
$$ d_{H}(A,B):=\inf \{ r>0;~A\subset U_{r}(B),~B \subset U_{r}(A) \}.$$

Let $\partial$ be a {\it{cemetery 
point}} added to 
$M$ so that 
$M_{\partial}:=M \cup \{ \partial \}$
is the one-point compactification of $M$. Since we regard $\partial$ as the point at infinity, 
we define $d_{H}(\{\partial \}, A):=\infty$ for any subset $A$ of $M$.

For any smooth function $f$ on $M$, we denote by $\nabla f$ its gradient vector field,
and for any smooth vector field $b$ on $M$, we define its {\it{weighted divergence}} 
${\rm div}_{\mu_{\alpha}} b$ by
$$ {\rm div}_{\mu_{\alpha}} b(x):=\frac{1}{w^{\alpha}(x)} {\rm div}(w^{\alpha}b)(x), \quad x\in M.$$
We then define 
the {\it{weighted Laplacian}} $\Delta_{\alpha}=\Delta_{\mu_{\alpha}}$ by
\begin{align}
\Delta_{\alpha} f(x)&:={\rm div}_{\mu_{\alpha}}(\nabla f)(x)
=
\Delta f(x)
+\frac{\alpha}{w(x)}g_{x}\big( \nabla w, \nabla f \big), \quad x\in M,
\end{align}
where 
$\Delta$ is the usual negative Laplacian (see e.g., \cite[Section 3.6]{Gri09} for details).

Let $p_\alpha (t,x, y)$, $t>0, x,y\in M$, be the {\it{minimal heat kernel}} on $M$, 
namely the minimal fundamental solution of the heat equation:
\begin{equation*}
\frac{\partial}{\partial t} u(t,x)=\Delta_\alpha u(t,x) , \quad u(0,x)=u_0(x), \quad t>0, \, x \in M.
\end{equation*}
It coincides with the transition function of the (weighted) {\it{Brownian motion}}
${\bf X}=(X_{t}, {\mathbb P}_{x}, \zeta)$ on the weighted manifold $(M, \mu_{\alpha})$
generated by $\Delta_{\alpha}$,
where ${\mathbb P}_{x}$ denotes the probability law of the underlying process
$(X_{t})$ starting from $x\in M$ and
$\zeta:=\inf \{t>0; \, X_{t} \notin M \}\in (0, \infty]$ is the {\it{lifetime}}.
We say that $(M, \mu_{\alpha})$ is \textit{stochastically complete} if  
\begin{equation}
\int_M p_\alpha (t,x, y) \mu_\alpha (dy)=1 \quad \mbox{for all }x\in M~\mbox{and }t>0.
\label{SC-kernel}
\end{equation}
The condition (\ref{SC-kernel}) is equivalent to 
\begin{equation}
{\mathbb P}_{x}(\zeta=\infty)=1
\quad \mbox{for all }x\in M.
\label{SC-life}
\end{equation}
See \cite[Lemma 8.5]{Gri06} for the proof.

Let $V$ be a killing potential, that is, a non-negative smooth function on $M$. 
We consider the Schr{\"o}dinger
operator $H_{V}^{(\alpha)}:=-\Delta_{\alpha}+V$ defined on $C^{\infty}_{c}(M)$. 
Since $M$ is geodesically complete, this differential operator
is essentially self-adjoint in $L^{2}(M, \mu_{\alpha })$ and 
the associated Schr{\"o}dinger semigroup $\{ e^{-tH_{V}^{(\alpha)}} \}_{t\geq 0}$
has an integral kernel $p^{V}_\alpha (t,x,y)$ with respect to $\mu_{\alpha}$, where
$p^{V}_\alpha (t,x,y)$ is a positive smooth function of $(t,x,y)\in (0,\infty)\times M \times M$
satisfying $p^{V}_{\alpha}(t,x,y)=p^{V}_{\alpha}(t,y,x)$.
We call $p^{V}_\alpha (t,x,y)$ the {\it{minimal Schr{\"o}dinger kernel function}}.
Clearly, it coincides with the minimal heat kernel $p_\alpha (t,x,y)$ in the case $V=0$.
%
\subsection{Random walk via graph discretization}
A countable collection $\mathbb{X} $ 
of connected measurable subsets of $M$ with finite positive measures
is called 
a \textit{partition} of ${M}$
if
\begin{equation}
M=\bigcup_{X \in {\mathbb X}} X~~\mbox{and }~
X \cap Y =\emptyset \hspace{3mm} \mbox{for all }
X,Y \in \mathbb{X} \mbox{ with }X \neq Y.
\label{bunkatsu-disjoint}
\end{equation}
Throughout the present paper, we assume
\begin{description}
\item[\bf{(A):}]~$\sharp  \hspace{0.5mm} \big \{ X \in \mathbb{X} ; X \subset B(x,r) \big \}<\infty$
for all $x\in M$ and $r>0$;
\item[\bf{(B):}]~$\vert {\mathbb X} \vert:= \sup_{X \in \mathbb{X}} {\rm diam} (X) <\infty.$
\end{description}
For a given $x\in M$,
we denote by $X (x)$ the unique element $X\in {\mathbb X}$ containing $x$.
For each $X \in \mathbb{X}$, we take a reference point $ x \in X$ and denote it by $x(X)$. 
We also set $X(\partial)=\{\partial \}$ and $x(\{\partial \})=\partial$.
We denote the set of reference points by $\mathscr X =\{ x(X) \}_{X \in \mathbb{X}}$.
It follows from (\ref{bunkatsu-disjoint}) that 
$x(X) \neq x(Y)$ for $X\neq Y$.
If the manifold $M$ is compact, condition {\bf{(A)}} implies that such a partition ${\mathbb X}$ is a finite set.
It is always possible to construct a partition 
$\mathbb X$ with {\bf{(A)}} and {\bf{(B)}}. See \cite[page 2464]{IK24} for details.

For a given $\rho>0$, let $\mathbb{X}={\mathbb X}({\rho})$ be a partition of $M$ with ${\bf{(A)}}$ and ${\bf{(B)}}$
satisfying $\vert \mathbb X \vert < \rho/3$. 
We say that $X$ and $Y$ in $\mathbb{X}$ are \textit{adjacent} if $d_H(X,Y)<\rho$ and write $X\sim_\rho Y$. 
Then we define an oriented graph $\mathbb{G}(\mathbb{X}, \rho)=(\mathbb{V}, \mathbb{E})$ called 
a ($\rho$-){\it{proximity graph}} of the manifold $M$ by $\mathbb{V}: ={\mathbb X}$ and 
$$\mathbb{E}:=\{ e=(X, Y)\in \mathbb{X}\times \mathbb{X}; X \sim_\rho Y \}.$$ 
For $X\in {\mathbb X}$, we define its ($\rho$-)neighborhood by 
$N_{\rho}(X):=\{ Y \in \mathbb{X} ; Y \sim_\rho X \}$.
Since $M$ is connected, ${\mathbb G}({\mathbb X}, \rho)$ is also connected, and
{\bf{(A)}} implies that ${\mathbb G}({\mathbb X}, \rho)$ is
locally finite, that is, 
$\sharp N_{\rho}(X)<\infty$. 
For later use, we introduce an enlarged
graph ${\mathbb G}^{\partial}({\mathbb X}, \rho)
=({\mathbb V}^{\partial}, {\mathbb E}^{\partial})$ 
obtained by adding the cemetery point $ \partial $ mentioned above to
the proximity graph $\mathbb G({\mathbb X}, \rho)$. 
To be precise,
${\mathbb V}^{\partial}={\mathbb X}^{\partial}:={\mathbb X} \cup \{X(\partial)\}$
and
$${\mathbb E}^{\partial}:={\mathbb E} \cup \{ (X, X(\partial)),  (X(\partial), X);  X \in \mathbb{X} \}
\cup \{ (X(\partial), X(\partial)) \}.$$ 
For an edge $e\in {\mathbb E}^{\partial}$, we denote by $o(e)$ and $t(e)$ the origin and the terminus of $e\in 
{\mathbb E}^{\partial}$, respectively. 
The inverse edge of $e \in {\mathbb E}^{\partial}$ is defined by the edge, say $\overline{e}$, satisfying
$o({\overline{e}})=t(e)$ and $t({\overline{e}})=o(e)$.
In the present paper, the same symbols $\mathbb X$ and ${\mathbb X}^{\partial}$
also denote the corresponding graphs ${\mathbb G}({\mathbb X}, \rho)$ and
${\mathbb G}^{\partial}({\mathbb X}, \rho)$, respectively.

For $X\in {\mathbb X}$ and $\rho>0$, we set
${\mathcal N}_{\rho}(X):=\bigcup_{Y\in N_{\rho}(X)} Y$. 
Let $V$ be a killing potential. 
For two parameters $\alpha , \beta \geq 0$, 
we define the transition probability $p_{\alpha,\beta }=
p_{\alpha, \beta, {\mathscr X}}$ of a random walk on
the proximity graph $({\mathbb G}^{\partial}({\mathbb X}, \rho), {\mathscr X})$
as follows. For $e=(X,Y)\in {\mathbb E}^{\partial}$ with $X\in {\mathbb X}$ and $Y\in {\mathbb X}^{\partial}$, 
\begin{equation*}
p_{\alpha, \beta }(e)=p_{\alpha, \beta}(X,Y):=\left\{ 
\begin{array}{ll}
{\displaystyle{
\min \{\beta V(x(X)), 1\}
}} & \mbox{if } Y= {X(\partial)}, 
\\
{\displaystyle{
\max \big \{
1-\beta V(x(X)), 0 \big \}
\frac{\mu_\alpha (Y)}{{\mu}_\alpha ({\mathcal N}_{\rho}(X))}
}}
 &  \mbox{if }Y \in N_{\rho}(X), \\
0 & \mbox{otherwise}
\end{array}\right.
\end{equation*}
and for $Y\in {\mathbb X}$,
$$ p_{\alpha, \beta }(X(\partial), X(\partial)):=1, \quad p_{\alpha, \beta }(X(\partial), Y):=0.$$
The transition kernel $p_{\alpha,\beta}$ defines a time-homogeneous Markov chain on
${\mathbb G}^{\partial}({\mathbb X}, \rho)$, which we call the random walk with
{\it{killing rate}} $\beta V$.
We also define a measure $m_\alpha $ on ${\mathbb X}^{\partial}$ by
$m_\alpha (X(\partial))=\mu_\alpha (X(\partial)):=0$ and
$$ m_\alpha (X):=\mu_\alpha (X) \mu_\alpha ({\mathcal N}_{\rho}(X)), \quad X\in {\mathbb X}.$$
When $\beta V=0$, a direct calculation gives
\begin{equation}
 p_{\alpha, \beta }(X,Y)m_\alpha (X)=\mu_\alpha (X)\mu_\alpha (Y)=
p_{\alpha, \beta }(Y,X)m_\alpha (Y), \quad X,Y\in {\mathbb X}^{\partial},
\label{symmetry}
\end{equation}
which means that the corresponding random walk is {\it{$m_\alpha $-symmetric}}.
\subsection{Exclusion process on the proximity graph}
We define the configuration space on the graph ${\mathbb X}^{\partial}$ by 
$${\mathcal X}:=\big \{ \eta=(\eta(X))_{X\in {\mathbb X}^{\partial}};~
\eta(X) \in \{0,1\} \mbox{ for }X\in {\mathbb X} \mbox{ and }\eta(X(\partial))=0 \big\},$$
where $\eta(X)=1$ means the site $X$ is occupied 
by a particle, while $\eta(X)=0$ means the site $X$ is vacant.
For $\eta\in {\mathcal X}$ and $e\in {\mathbb E}$, we denote by $\eta^{e}\in {\mathcal X}$ 
the configuration
defined by exchanging the value of $\eta(o(e))$ and $\eta(t(e))$, i.e., 
\begin{equation*}
\eta^{e}(X)
=\left\{ 
\begin{array}{ll}
\eta({{t(e)}})
&~~X=o(e) \\
\eta({o(e)}) &~~X=t(e)\\
\eta({X}) &~~\mbox{otherwise}.
\end{array}\right. 
\end{equation*} 

Note $\eta^{e}=\eta^{\overline{e}}$ for $e\in {\mathbb E}$.
For $e\in {\mathbb E}^{\partial} \setminus {\mathbb E}$, we also define $\eta^{e}\in {\mathcal X}$ by
\begin{equation*}
\eta^{e}(X)
=\left\{ 
\begin{array}{ll}
0
&~~X=o(e), t(e) \\
\eta({X}) &~~\mbox{otherwise}.
\end{array}\right. 
\end{equation*} 
Throughout the present paper, we usually write $\eta^{Y,Z}$ instead of $\eta^{e}$ for 
$e=(Y,Z)\in  {\mathbb E}^{\partial}$. 

Motivated by the construction of \textit{lamplighter groups} and
\textit{lamplighter graphs} (see, for instance, \cite{Sal01} and \cite{Woe05}),
we regard $\eta^e$ as adjacent to $\eta$. This gives the directed graph
${\mathcal G}({\mathcal X}, \rho)=({\mathcal V}, {\mathcal E})$
on the configuration space ${\mathcal X}$, with ${\mathcal V}:={\mathcal X}$
and
\begin{align*}
{\mathcal E}:= \big \{ {\bm e}=(\eta, \zeta) \in 
{\mathcal X} \times {\mathcal X}; & 
{\mbox{ there exists an }} {e}\in {\mathbb E}^{\partial}
\nonumber \\
&
 {\mbox{ such~that }} \zeta=\eta^{e} \mbox{ with } \eta({o(e)})=1, \eta(t(e))=0
\big \}.
\end{align*}
For $\eta\in {\mathcal X}$, we put
${\mathcal E}_{\eta}=\{ {\bm e}\in {\mathcal E}; o({\bm e})=\eta \}$ and ${\mathcal V}_{\eta}:=\{ t({\bm e}) \in 
{\mathcal X}; {\bm e}\in {\mathcal E}_{\eta} \}$.

We denote by ${\mathcal F}_{c}$ the class of all {\it{local}} functions on $\mathcal X$, that is, the space of all
bounded functions $f=f((\eta(X))_{X\in {\mathbb X}^{\partial}}): {\mathcal X} \to \mathbb R$ 
depending only on finitely many $\eta(X)$'s.
Then, we set ${\bm p}_{\alpha, \beta }({\bm e})
={\bm p}_{\alpha, \beta }(\eta, \eta^{e}):=p_{\alpha, \beta }(e)$ for 
${\bm e}=(\eta, \eta^{e})\in {\mathcal E}$ with $e\in {\mathbb E}^{\partial}$, and define a transition operator
${\mathcal L}_{\alpha, \beta }$ acting on ${\mathcal F}_{c}$ by
\begin{align}
{\mathcal L}_{\alpha, \beta }f(\eta)&:= 
\sum_{{\bm e} \in {\mathcal E}_{\eta}} {\bm p}_{\alpha, \beta }({\bm e}) \big( f(t({\bm e})) -f(\eta) \big), \quad 
\eta \in {\mathcal X}.
\label{transition2}
\end{align}
We may write ${\mathcal L}_{\alpha, \beta }f$ as
\begin{align*}
{\mathcal L}_{\alpha, \beta }f(\eta)&= 
\sum_{\zeta\in {\mathcal V}_{\eta}} {\bm p}_{\alpha, \beta }
(\eta, \zeta) \big( f(\zeta) -f(\eta) \big)
\nonumber \\
&=\sum_{e\in {\mathbb E}^{\partial}} {\bm p}_{\alpha, \beta }(\eta, \eta^{e}) 
\big( f(\eta^{e}) -f(\eta) \big)
{\bf 1}_{\eta(o(e))=1, \eta(t(e))=0} (e) 
\nonumber \\
&= \sum_{e\in {\mathbb E}^{\partial}} {\bm p}_{\alpha, \beta }(\eta, \eta^{e}) 
\eta(o(e))(1-\eta(t(e)))
\big( f(\eta^{e}) -f(\eta) \big)
\nonumber \\
&=
\sum_{X \in \mathbb{X}^\partial }\sum_{Y \sim_\rho X} 
 p_{\alpha, \beta}(X, Y)\eta(X) (1-\eta(Y))\big( f(\eta^{XY}) -f(\eta) \big)
\nonumber \\
&=
\sum_{X \in \mathbb{X}} \eta(X) 
\Big\{
\sum_{Y \sim_\rho X} p_{\alpha, \beta}(X, Y)(1-\eta(Y))\big( f(\eta^{XY}) -f(\eta) \big)
\nonumber \\
& \qquad\qquad\qquad {}+ p_{\alpha, \beta }(X, X(\partial))
\big( f(\eta^{X, X(\partial)}) -f(\eta) \big) \Big\}, \quad \eta \in {\mathcal X},
\end{align*}
In the last line, we used $\eta(X(\partial))=0$. This removes the edges
emanating from the cemetery point $X(\partial)$, while the killing edges
$(X, X(\partial))$, whose origin may be occupied, do survive in the summation.
In Section \ref{exclusion} below, we associate with the transition operator
${\mathcal L}_{\alpha, \beta }$ a continuous-time exclusion dynamics
${\eta^{\alpha, \beta }}=\{{\eta^{\alpha, \beta }}(t)\}_{t\geq 0}
=\{({\eta^{\alpha, \beta }}(t,X))_{X\in {\mathbb X}^{\partial}} \}_{t\geq 0}$
on the configuration space ${\mathcal X}$, constructed for every initial distribution
as a solution of the martingale problem for ${\mathcal L}_{\alpha, \beta }$
acting on ${\mathcal F}_{c}$ (see Proposition \ref{prop existence} and
Remark \ref{rem existence}).
We call it the {\it{exclusion process}} on ${\mathbb G}^{\partial}({\mathbb X}, \rho)$.

\begin{re}\label{rem-sym}
In some literature, the term ``symmetric'' for exclusion processes refers to the condition
${\bm p}_{\alpha, \beta }(\eta, \eta^e) = {\bm p}_{\alpha, \beta }(\eta^e, \eta)$ for any 
$\eta\in\mathcal X$ and $e\in\mathbb E^{\partial}$. See, for instance, \cite[Chapter VIII.2]{Lig85}.
In our case, this condition is not valid in general,
even if the partition $\mathbb X$ is 
an equi-partition with respect to $\mu_\alpha$, that is
$\mu_\alpha(X)=\mu_\alpha(Y)$ for any $X,Y\in \mathbb X$.
However, note that the jump rate $p_{\alpha,\beta}$ is reversible with respect to $m_{\alpha}$ when $\beta V=0$.
To take advantage of this fact in the computations,
we include the weight $m_{1/2}$ in the definition of the empirical measure below.
\end{re}

We introduce a realization map $\pi_{\mathbb X}:\mathcal X \to {\mathcal M}_{+}(M)$ 
by
$$ \pi_{\mathbb X} (\eta):=
\frac{1}{\omega_{d}\rho^{d}} \sum_{X\in {\mathbb X}}\eta({X})
m_{1/2} (X)
\delta_{x(X)}, 
\quad \eta\in {\mathcal X},$$
where 
$\omega_{d}$ is the volume of the unit ball in $\mathbb R^d$, 
$m_{1/2} (X):=\mu_{1/2} (X) \mu_{1/2} ({\mathcal N}_{\rho}(X))$ and
$\delta_{x(X)}$ is the Dirac delta measure at $x(X)\in M$. 
We call $\pi_{\mathbb X}(\eta)$
the {\it{canonical empirical measure}}.
We also introduce analogous 
empirical measures
${\widetilde \pi}_{\mathbb X}(\eta)$ and ${\widehat \pi}_{\mathbb X}(\eta)$ ($\eta\in \mathcal X$) by
\begin{align*}
{\widetilde \pi}_{\mathbb X}(\eta):=\sum_{X\in {\mathbb X}} \eta(X) \mu_{1/2}(X) \delta_{x(X)}
\quad {\mbox{and}} \quad
{\widehat \pi}_{\mathbb X}(\eta):=\sum_{X\in {\mathbb X}} \eta(X)\delta_{x(X)},
\end{align*}
respectively. 
The former approximates the weighted measure $\mu_{1/2}(dx)$ on $M$, whereas the latter preserves the number of particles. 
\begin{re}
At first glance, the definition of the canonical empirical measure $\pi_{\mathbb X}(\eta)$ may appear unusual because
it is weighted by the measure $m_{1/2}$. However, this weighting is natural because it gives the correct
Riemann-sum normalization. Indeed, for a sequence $\{ \mathbb{X}_k\}_{k \in \mathbb{N}}$ such that $| \mathbb{X}_k | \rightarrow 0$ as $k \rightarrow \infty$, there exists a subsequence $k(\rho)$ satisfying $k(\rho) \nearrow \infty$ as $\rho
\searrow 0$ such that for any
$\varphi\in C_c^\infty(M)$, we have
\begin{align*}
\lim_{\rho\searrow0}\frac{1}{\omega_{d}\rho^{d}} \sum_{X\in {\mathbb X}_{k(\rho)}}\varphi(x({X})) m_{1/2} (X)
= \int_M \varphi(x)\mu_{1}(dx),
\end{align*}
under reasonable conditions on $\{ \mathbb{X}_k\}_{k \in \mathbb{N}}$.
In particular, for the fully occupied configuration, omitting the weight $m_{1/2}$
would not produce a Riemann-sum approximation converging to $\mu_1$.
We therefore use $\pi_{\mathbb X}(\eta)$ as the canonical empirical measure and compare it below with the
alternative empirical measures ${\widetilde \pi}_{\mathbb X}(\eta)$ and ${\widehat \pi}_{\mathbb X}(\eta)$.
\end{re}
\subsection{Hydrodynamic equation}
Let us recall that $M=(M,g,\mu_{1})$ is a $d$-dimensional geodesically complete 
weighted Riemannian manifold and $V$ is a non-negative smooth potential function on $M$.
We introduce the notion of a {\it{bounded weak solution}} to
the heat equation with the killing potential term:
\begin{align}\label{hdleq}
\frac{\partial}{\partial t}u(t,x)=\Delta_{1} u(t,x)-V(x)u(t,x),~~u(0,x)=u_{0}(x), \quad t>0, \, x\in M.
\end{align} 

\begin{df}\label{def:weak}
Fix $T>0$ and a Borel measurable function $u_0:M\to[0,1]$. 
We say that $\pi=(\pi_t)_{0\leq t \leq T}\in C([0,T], \mc M_+(M))$
is a bounded weak solution to \eqref{hdleq} 
if the following three conditions hold:
\begin{enumerate}
\item[\rm(i)] $\pi_{0}(dx)=u_{0}(x)\mu_{1}(dx)$.
  \item[\rm(ii)] 
    For every $t\in[0,T]$, the measure $\pi_t$ is absolutely continuous with
    respect to $\mu_1$, and there exists a constant $C>0$ such that its Radon--Nikodym density
    $u(t,x):=(d\pi_{t}/d\mu_1)(x)$ satisfies
    \begin{align}
    0\le u(t,x) \le C \quad \text{for $\mu_1$-a.e. $x\in M$ and every } t\in[0,T].
    \label{RN-unif-bdd}
    \end{align}
  \item[\rm(iii)] 
    For every $\varphi\in C_c^\infty(M)$ and every $t\in[0,T]$,
    \begin{align}
\lan\pi_t, \varphi\ran - \lan \pi_0,\varphi\ran = \int_0^t \lan\pi_s, 
\Delta_{1}\varphi -V\varphi
\ran ds.
\label{weak heat}
\end{align}
\end{enumerate}
\end{df}

\begin{re}
In Definition 2.3, the density $u(t,\cdot)$ is defined for each fixed $t$
separately, and its joint measurability in $(t,x)$ is not a priori clear.
However, a jointly measurable version is obtained canonically as follows.
By the vague continuity of $t \mapsto \pi_t$ and the uniform bound \eqref{RN-unif-bdd},
$\Lambda(F) := \int_0^T \langle \pi_t, F(t,\cdot)\rangle\, dt$, 
$F \in C_c((0,T)\times M)$
defines a positive Radon measure 
on
$(0,T)\times M$ which is bounded by a constant multiple of the product measure
$dt \otimes \mu_{1}$.
Hence the Radon--Nikodym theorem yields a jointly measurable function
$\widetilde{u} \in L^\infty((0,T)\times M)$ with
$0 \le \widetilde{u} \le C$ such that
$\widetilde{u}(t,\cdot) = u(t,\cdot)$ $\mu_1$-a.e.\ for a.e.\ $t \in (0,T)$.
Since the function $u$ enters arguments of Section 4 only through $dt$-integrals,
we may replace $u$ by $\widetilde{u}$ there. Moreover, once the
smoothness of $\widetilde{u}$ is established in Lemma 4.2, the vague
continuity of $t \mapsto \pi_t$ recovers the identity
$\pi_t(dx) = \widetilde{u}(t,x)\mu_1(dx)$ for \emph{every} $t \in (0,T)$.
\end{re}

As in the classical approach, the uniqueness of weak solutions plays
a fundamental role in the proof of the hydrodynamic limit.
In the current non-compact manifold setting, we prove the following result,
which is a natural generalization of the Euclidean setting.

\begin{tm}\label{thm:uniq}
Assume that $(M,g,\mu_1)$ is stochastically complete.  Let $\pi$ and $\tilde{\pi}$ be
bounded weak solutions to \eqref{hdleq} in the sense of Definition \ref{def:weak},
with the same initial measurable function $u_0:M\to[0,1]$. Then
$\pi_{t}={\tilde{\pi}}_{t}$
for all $0\leq t \leq T$.
In particular, the unique bounded solution $\pi$ is given by
$\pi_{t}(dx)=u(t,x)\mu_{1}(dx)$, $0\leq t \leq T$,
where 
$u(0,x)=u_{0}(x)$ and
\begin{align}
u(t,x)
&
=
\int_{M} p_{1}^{V}(t,x,y) u_{0}(y) {\mu}_{1}(dy), \quad 0< t\leq T, \, x\in M.
\label{explicit-representation}
\end{align} 
\end{tm}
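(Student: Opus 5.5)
By linearity it suffices to show that a bounded weak solution with $u_0\equiv 0$ vanishes identically; more directly, we will show that any bounded weak solution coincides with the right-hand side of \eqref{explicit-representation}. This gives uniqueness and the representation at the same time. The plan follows the outline in the introduction: local parabolic regularity first, then a Feynman--Kac formula with stopping times, then removal of the stopping times using stochastic completeness.

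\emph{Step 1: regularity.} Let $\pi$ be a bounded weak solution. We pass to the jointly measurable density $\widetilde u$ from the remark after Definition \ref{def:weak}, so that $0\le \widetilde u\le C$. Condition (iii) then says that $\widetilde u$ is a distributional solution of $\partial_t u=\Delta_1 u-Vu$ on $(0,T)\times M$, since $\Delta_1$ is symmetric with respect to $\mu_1$. This operator is hypoelliptic with smooth coefficients. Working in local charts, interior parabolic regularity (hypoellipticity of $\partial_t-\Delta_1+V$) shows that $\widetilde u$ has a version in $C^\infty((0,T)\times M)$ that solves the equation classically. By vague continuity of $t\mapsto\pi_t$, this version gives $\pi_t=u(t,\cdot)\mu_1$ for every $t\in(0,T]$. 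We also need the initial datum in a usable form: for $\varphi\in C_c^\infty(M)$, $\int u(t,x)\varphi(x)\mu_1(dx)\to\int u_0\varphi\,d\mu_1$ as $t\downarrow0$.

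\emph{Step 2: Feynman--Kac with stopping times.} Fix $t\in(0,T]$ and $x\in M$. Take an exhaustion of $M$ by relatively compact open sets $\Omega_n$, and let $\tau_n$ be the exit time of $\mathbf X$ from $\Omega_n$. Fix $\varepsilon>0$ and apply It\^o's formula to $s\mapsto e^{-\int_0^s V(X_r)dr}u(t-s,X_s)$ on $[0,(t-\varepsilon)\wedge\tau_n]$. Since $u$ is smooth on a neighbourhood of the relevant compact set, the local martingale part is a true martingale, and we obtain
\[
u(t,x)=\mathbb E_x\Big[e^{-\int_0^{(t-\varepsilon)\wedge\tau_n}V(X_r)dr}\,u\big(t-(t-\varepsilon)\wedge\tau_n,X_{(t-\varepsilon)\wedge\tau_n}\big)\Big].
\]

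\emph{Step 3: removing the stopping times.} By stochastic completeness \eqref{SC-life}, $\zeta=\infty$ almost surely, so $\tau_n\uparrow\infty$ and $\tau_n>t$ eventually. Because $u$ is bounded and $V\ge0$, dominated convergence as $n\to\infty$ gives
\[
u(t,x)=\mathbb E_x\big[e^{-\int_0^{t-\varepsilon}V}u(\varepsilon,X_{t-\varepsilon})\big]=\int_M p_1^V(t-\varepsilon,x,y)\,u(\varepsilon,y)\,\mu_1(dy).
\]
Here $p_1^V$ is identified with the Feynman--Kac kernel of the killed Brownian motion; this is the minimal kernel, and it is standard via approximation by Dirichlet problems on the $\Omega_n$.

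\emph{Step 4: letting $\varepsilon\downarrow0$.} This is the main obstacle. We only know that $u(\varepsilon,\cdot)\mu_1\to u_0\mu_1$ vaguely, while the test function $y\mapsto p_1^V(t-\varepsilon,x,y)$ is neither compactly supported nor fixed. The approach we would try first has three parts. First, note that $p_1^V(t-\varepsilon,x,\cdot)\to p_1^V(t,x,\cdot)$ locally uniformly, by smoothness of the kernel. Second, truncate with a cutoff $\chi_R\in C_c^\infty$. On the compact part, vague convergence together with the uniform bound $C$ handles the limit. For the tail, use $0\le u\le C$: the tail is bounded by $C\int(1-\chi_R)p_1(t-\varepsilon,x,y)\mu_1(dy)$. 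Stochastic completeness (conservativeness, $\int p_1=1$), together with continuity in $s$ of $\int\chi_R\,p_1(s,x,\cdot)\,d\mu_1$, makes this small uniformly in $\varepsilon$ small when $R$ is large. This yields \eqref{explicit-representation} for every bounded weak solution, and hence $\pi_t=\tilde\pi_t$ for all $t$.
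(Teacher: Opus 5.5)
Your proposal is correct and follows essentially the same route as the paper: interior regularity, then a stopped Feynman--Kac formula, then removal of the stopping times via stochastic completeness, then $\varepsilon\searrow0$. In the last step, conservativeness $\int_M p_1\,d\mu_1=1$ together with $p_1^V\le p_1$ keeps mass from escaping to infinity. Your cutoff-plus-tail argument is a hands-on version of the paper's $L^1(\mu_1)$-continuity of $t\mapsto p_1^V(t,x,\cdot)$ obtained via Pratt's lemma. Working with a single solution rather than the difference $u-v$ lets you read off \eqref{explicit-representation} directly, instead of invoking the separate lemma that $T^V_t u_0$ is a weak solution. One small slip: Step 2 cannot be run at $t=T$, because interior regularity only gives smoothness on $(0,T)\times M$. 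Do $t\in(0,T)$ and recover $t=T$ from the vague continuity of $t\mapsto\pi_t$, as the paper does.
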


The proof of Theorem \ref{thm:uniq} is provided in Section \ref{uni}.

\begin{re}
When $V=0$, if $M$ is not stochastically complete,
bounded classical solutions need not be unique.
Hence, the assumption that $M$ is stochastically complete
is necessary for the validity of Theorem \ref{thm:uniq}.
\end{re}

\subsection{Main results}\label{main results}

Let $\{\mathbb{X}_{k} \}_{k\in {\mathbb N}}$ 
be a sequence of partitions of $M$ satisfying 
conditions {\bf{(A)}}, {\bf{(B)}} and $ |\mathbb{X}_{k}|  \searrow 0$ as $k\to \infty$.
For each $k$, we take an arbitrary set of reference points 
associated with the partition ${\mathbb X}_{k}$
and denote it by ${\mathscr X}_{k}=\{x(X)\}_{X\in {\mathbb X}_{k}}$.
We also denote by ${\mathcal X}_{k}$ the configuration space on the graph ${\mathbb X}_{k}^{\partial}$.

For a probability measure $\nu^{k}$ on ${\mathcal X}_{k}$,
we denote by ${\mathbb P}_{\nu^{k}}$ the probability measure on
$D([0,T], {\mathcal X}_{k})$ with the initial distribution $\nu^{k}$ constructed in
Proposition \ref{prop existence} below, and by ${\eta}^{\alpha, \beta, k}
=\{{\eta}^{\alpha, \beta, k}(t)\}_{t\geq 0}$ the canonical coordinate process under
${\mathbb P}_{\nu^{k}}$, that is, the exclusion process associated with
${\mathcal L}_{\alpha, \beta}$ with the
initial measure $\nu^{k}$ (see Remark \ref{rem existence}(i)). We also introduce
${\eta}^{\alpha, \beta, k}_{\gamma}=\{{\eta}^{\alpha, \beta, k}_{\gamma}(t)\}_{t\geq 0}$ by
$${\eta}^{\alpha, \beta, k}_{\gamma }(t):={\eta}^{\alpha, \beta, k}(\gamma t), \quad t\geq0.$$
Note that the law of ${ \eta}^{\alpha, \beta,  k}_{\gamma}$ solves the martingale
problem associated with
$\gamma {\mathcal L}_{\alpha, \beta}$ (see Remark \ref{rem existence} (ii)).
For simplicity, we put
\begin{equation*}
\delta :=\frac{\rho^2}{2(d+2)}~\mbox{ and }~~
 {\xi}^{\rho}(t):=
{\eta}^{1/2, \delta, k(\rho)}_{1/\delta}(t), \quad t\geq 0.
\end{equation*}

To state our main results, we need to impose the following 
{\it{locally almost equi-partition condition}}
on the choice of $\{ \mathbb{X}_k \}$:
\begin{description}
\item[\bf{(C):}] 
There exists a subsequence $\{ k(\rho) \}_{\rho>0} \subset \mathbb N$ 
satisfying $k(\rho)\nearrow \infty$ and $|\mathbb{X}_{k(\rho)}|=o(\rho^3)$ as $\rho \searrow 0$
such that for a fixed base point $o\in M$ and for all $r>0$
\begin{align} 
\sup \bigg\{ 
\bigg \vert \frac{{\mu}_{1} (Y) }{{\mu}_{1} (X) }-1 \bigg \vert  \, ; \,  &X, Y 
\in \mathbb{X}_{k(\rho)}, X, Y \subset B(o, r),  X\sim_\rho Y
\bigg\}
\nonumber \\
&=o_{B(o,r)}(\rho^2)\quad \mbox{ as }\rho \searrow 0.
\label{hobo toubunkatsu}
\end{align}
\end{description}

We are in a position to state the main result of this paper.
\begin{tm}\label{mainthm}
Let $(M, g, \mu_{1})$ be a geodesically complete weighted Riemannian manifold.
In addition, we assume that $M$ is stochastically complete.
Let $\{ \mathbb{X}_{k}\}$ be a sequence of partitions of $M$
satisfying {\bf (A)}, {\bf (B)} and {\bf (C)}.
Let $u_{0}: M \to [0,1]$ be a Borel measurable function and assume that 
the initial measures $\{ \nu^{k(\rho)} \}_{\rho>0}$ satisfy
\begin{align}
\lim_{\rho \searrow 0} \nu^{k(\rho)}\bigg( \Big \vert 
\big \langle
\pi_{{\mathbb X}_{k(\rho)}}({\xi}^{\rho}(0)),
\varphi
\big \rangle
-\int_{M} u_{0}(x) {\varphi}(x){\mu}_1 (dx) \Big \vert >\varepsilon \bigg)=0
\label{mainthm ini}
\end{align}
for any $\varepsilon>0$ and $\varphi \in C^{\infty}_{c}(M)$. Then 
\begin{align}
\lim_{\rho \searrow 0} {\mathbb P}_{\nu^{k(\rho)}}\bigg( 
\Big \vert 
\big \langle
\pi_{{\mathbb X}_{k(\rho)}}({\xi}^{\rho}(t)),
\varphi
\big \rangle
-\int_{M} u(t,x) \varphi(x) {\mu}_1(dx) \Big \vert 
>\varepsilon \bigg)=0
\label{mainthm claim}
\end{align}
holds for every $t\in [0, T]$, $\varepsilon>0$ and $\varphi\in C^{\infty}_{c}(M)$, 
where $\pi_{t}(dx)=u(t,x) {\mu}_{1}(dx)$ is the unique bounded weak solution
to \eqref{hdleq}.
\end{tm}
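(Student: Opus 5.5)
The proof will follow the classical entropy-free route for exclusion processes: martingale decomposition, tightness, identification of limit points as bounded weak solutions, and the uniqueness result Theorem \ref{thm:uniq}. Write $\pi^\rho_t:=\pi_{\mathbb X_{k(\rho)}}(\xi^\rho(t))$ and let $Q^\rho$ be its law on $D([0,T],\mathcal M_+(M))$. The first step is to construct the process via the martingale problem for $\delta^{-1}\mathcal L_{1/2,\delta}$ on $\mathcal F_c$. Because the graph is only locally finite with unbounded degrees, I would truncate to finite subgraphs, obtain finite-state chains, and pass to a limit using that each jump only exchanges or deletes occupations. For $\varphi\in C_c^\infty(M)$, Dynkin's formula then gives
\begin{align*}
\langle\pi^\rho_t,\varphi\rangle-\langle\pi^\rho_0,\varphi\rangle-\int_0^t \delta^{-1}\mathcal L_{1/2,\delta}\langle\pi_{\mathbb X}(\cdot),\varphi\rangle(\xi^\rho(s))\,ds=M^{\rho,\varphi}_t,
\end{align*}
and the whole argument reduces to analyzing the drift term and showing that the martingale term vanishes.

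The heart of the proof is the drift computation. Applying $\mathcal L_{1/2,\delta}$ to the linear function $\sum_X \eta(X)m_{1/2}(X)\varphi(x(X))$ produces three contributions: a linear term $\sum_X\eta(X)\sum_{Y\sim X}p(X,Y)(m_{1/2}(Y)\varphi(x(Y))-m_{1/2}(X)\varphi(x(X)))$, a quadratic term coming from the factor $(1-\eta(Y))$, and the killing term $-\sum_X\eta(X)\min\{\delta V,1\}m_{1/2}(X)\varphi(x(X))$. For the quadratic term, detailed balance (\ref{symmetry}) with $\alpha=1/2$ gives $p(X,Y)m_{1/2}(X)=(1-\delta V(x(X)))\mu_{1/2}(X)\mu_{1/2}(Y)$, so the quadratic term becomes the sum of $\eta(X)\eta(Y)\mu_{1/2}(X)\mu_{1/2}(Y)(\varphi(x(Y))-\varphi(x(X)))$ over edges, up to an $O(\delta)$ correction from $V$. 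This sum is antisymmetric in $(X,Y)$, so it cancels exactly.

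What remains must be handled carefully, because the mismatch between $m_{1/2}(X)$ and $m_{1/2}(Y)$ for neighbouring cells is not small enough on its own after division by $\delta\sim\rho^2$. Here condition {\bf (C)} is the tool: the ratio $\mu_1(Y)/\mu_1(X)=1+o(\rho^2)$ on compacts, together with the smoothness of $w$, allows replacement of $m_{1/2}(Y)/m_{1/2}(X)$ by $1$ at cost $o(\rho^2)$. After that replacement, the linear term reduces to $\delta^{-1}$ times the averaging operator $\mu_{1/2}(\mathcal N_\rho(X))^{-1}\sum_Y\mu_{1/2}(Y)(\varphi(x(Y))-\varphi(x(X)))$. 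By the estimates of \cite{IK24}, using $|\mathbb X|=o(\rho^3)$ and $\delta=\rho^2/(2(d+2))$, this converges uniformly on compacts to $\Delta_1\varphi$. Here $\Delta_{1}$ with $\alpha=1$ arises because the $\sqrt w$ weighting of the jump measure, combined with the $m_{1/2}$ weighting of the field, yields the drift $\nabla w/w$. The killing term divided by $\delta$ gives $-V\varphi$. I expect this drift step to be the main obstacle: one must keep the error $o(1)$ uniformly in $\eta$, with only local degree bounds, and track how $\alpha=1/2$ in the walk produces $\Delta_1$ in the limit.

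Next, the predictable quadratic variation of $M^{\rho,\varphi}$ is bounded by $\delta^{-1}\sum_X\sum_Y p(X,Y)(\rho^{-d}\,\cdot\,\mathrm{jump})^2$ on $\supp\varphi$. Each jump has size $O(\rho^{-d}m_{1/2}(X)\rho)$, and with $m_{1/2}(X)\approx\mu_{1/2}(X)\rho^d$ this makes the variation $O(\rho^{d}\cdot\rho^{-2}\rho^2\cdot\max_X\mu(X))\to0$. Doob's inequality then kills the martingale. Tightness follows from Aldous' criterion applied to $\langle\pi^\rho,\varphi\rangle$ over a countable dense family $\varphi_j$, together with the uniform mass bound $\langle\pi^\rho_t,|\varphi|\rangle\le C_\varphi$. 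That mass bound also shows every limit point satisfies $\pi_t\le\mu_1$, hence is absolutely continuous with density in $[0,1]$. Since jump sizes vanish, limits lie in $C([0,T],\mathcal M_+(M))$. Every limit point is thus concentrated on bounded weak solutions with initial condition $u_0$, by (\ref{mainthm ini}) and the drift limit. Theorem \ref{thm:uniq}, which uses stochastic completeness, shows this solution is unique, so $Q^\rho$ converges and (\ref{mainthm claim}) follows because the limit is deterministic.
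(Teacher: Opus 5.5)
Your proposal is correct and follows essentially the paper's proof: martingale-problem construction, the drift split with exact antisymmetric cancellation of the quadratic term via detailed balance, condition {\bf (C)} transferred to $m_{1/2}$ for the neighbour mismatch, Lemma \ref{one particle} for the linear term, vanishing carr\'e du champ, tightness via the uniform mass bound, and Theorem \ref{thm:uniq}. A few small points, none affecting the conclusion:

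The transfer of {\bf (C)} to $m_{1/2}$ is the paper's Lemma \ref{hobo 1/2}, and it does not follow from smoothness of $w$ alone. Each of $\mu_{1/2}(Y)/\mu_{1/2}(X)$ and $\mu_{1/2}(\mathcal N_\rho(Y))/\mu_{1/2}(\mathcal N_\rho(X))$ is only $1+O(\rho)$. One needs the factors $w^{1/2}(x(X))$ to cancel between them, together with \eqref{nbd} and $|\mathbb X_{k(\rho)}|=o(\rho^3)$.

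Killing jumps change $\langle \pi_{\mathbb X}(\eta),\varphi\rangle$ by $O(\rho^{-d}m_{1/2}(X))$, with no extra factor $\rho$. This is harmless because their rate is $\delta V$.

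The drift $\nabla w/w=2\nabla w^{1/2}/w^{1/2}$ comes entirely from the $\mu_{1/2}$-weighted walk. The field weight $m_{1/2}$ contributes nothing to it.
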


\begin{co}\label{Co1}
Under the same assumptions on $M$, $V$ and $\{{\mathbb X}_{k}\}_{k\in \mathbb N}$ as in 
Theorem \ref{mainthm}, assume that
\begin{align}
\lim_{\rho \searrow 0} \nu^{k(\rho)}\bigg( \Big \vert 
\big \langle
\widetilde{\pi}_{{\mathbb X}_{k(\rho)}}({\xi}^{\rho}(0)),
\varphi
\big \rangle
-\int_{M} u_{0}(x) {\varphi}(x){\mu_{1/2}}(dx) \Big \vert >\varepsilon \bigg)=0
\label{Co1 ini}
\end{align}
for any $\varepsilon>0$ and $\varphi \in C^{\infty}_{c}(M)$. Then
\begin{align}
\lim_{\rho \searrow 0} {\mathbb P}_{\nu^{k(\rho)}}\bigg( \Big \vert 
\big \langle
\widetilde{\pi}_{{\mathbb X}_{k(\rho)}}({\xi}^{\rho}(t)),
\varphi
\big \rangle
-\int_{M} u(t,x) {\varphi}(x){\mu_{1/2}}(dx) \Big \vert >\varepsilon \bigg)=0
\label{Co1 claim}
\end{align}
for every $t\in  [0, T]$, $\varepsilon>0$ and $\varphi \in C^{\infty}_{c}(M)$.
\end{co}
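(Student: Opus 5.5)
The plan is to reduce Corollary \ref{Co1} to Theorem \ref{mainthm} by comparing the two empirical measures $\widetilde{\pi}_{\mathbb X}(\eta)$ and $\pi_{\mathbb X}(\eta)$ site by site. For $X\in\mathbb X_{k(\rho)}$ the weights are $\mu_{1/2}(X)$ and $(\omega_d\rho^d)^{-1}\mu_{1/2}(X)\mu_{1/2}(\mathcal N_\rho(X))$. So $\widetilde\pi$ is obtained from $\pi$ by multiplying the mass at $x(X)$ by
\[
h_\rho(X):=\frac{\omega_d\rho^d}{\mu_{1/2}(\mathcal N_\rho(X))}.
\]
Since $w$ is smooth and positive, $\mu_{1/2}(\mathcal N_\rho(X))$ should be approximately $w^{1/2}(x(X))\,\mathrm{vol}_g(B(x(X),\rho))$. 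The reason is that, by {\bf (B)} and $|\mathbb X_{k(\rho)}|=o(\rho^3)$, the set $\mathcal N_\rho(X)$ is sandwiched between $B(x(X),\rho-o(\rho^3))$ and $B(x(X),\rho+o(\rho^3))$. The expected conclusion is
\[
\sup_{X\subset K}\Big|h_\rho(X)-w^{-1/2}(x(X))\Big|\longrightarrow 0 \quad\mbox{as } \rho\searrow0
\]
for every compact $K$. This is presumably among the preliminary estimates of Section 3.1, and if not I would prove it from $\mathrm{vol}_g(B(x,\rho))=\omega_d\rho^d(1+O_K(\rho^2))$ together with uniform continuity of $w$ on compacts.

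Fix $\varphi\in C_c^\infty(M)$ and set $\psi:=w^{-1/2}\varphi\in C_c^\infty(M)$. Using $0\le\eta\le1$, I would write
\[
\big|\langle\widetilde\pi_{\mathbb X}(\eta),\varphi\rangle-\langle\pi_{\mathbb X}(\eta),\psi\rangle\big|
\le\frac{1}{\omega_d\rho^d}\sum_{X\cap\,\supp\varphi\neq\emptyset} m_{1/2}(X)\,|\varphi(x(X))|\,\Big|h_\rho(X)-w^{-1/2}(x(X))\Big| .
\]
This is bounded by $\|\varphi\|_\infty\sup_{X\subset K}|h_\rho-w^{-1/2}|$ times the total mass $\langle\pi_{\mathbb X}(\mathbf 1),\mathbf 1_K\rangle$, where $K$ is a compact $1$-neighbourhood of $\supp\varphi$. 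That total mass stays bounded, essentially by $\mu_{1/2}(K)\sup_K w^{1/2}$. The difference therefore tends to $0$ deterministically and uniformly in $\eta$. On the limit side, $\int u\varphi\,d\mu_{1/2}=\int u\psi\,d\mu_1$ because $\mu_{1/2}=w^{-1/2}\mu_1$.

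With this in hand, assumption \eqref{Co1 ini} for $\varphi$ is equivalent to \eqref{mainthm ini} for $\psi$. Since $\varphi\mapsto w^{-1/2}\varphi$ is a bijection of $C_c^\infty(M)$, \eqref{Co1 ini} for all test functions gives the hypothesis of Theorem \ref{mainthm}. Theorem \ref{mainthm} then yields \eqref{mainthm claim} for $\psi$, and the same uniform comparison converts it into \eqref{Co1 claim}.

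The main obstacle is the uniform volume comparison for $\mathcal N_\rho(X)$. Cells adjacent at Hausdorff distance $<\rho$ must be shown to fill out a geodesic ball of radius $\rho$ up to a boundary layer of width $O(|\mathbb X_{k(\rho)}|)$. That layer must then be shown to have relative volume $o(1)$; it is in fact $O(\rho^{2})$, using $|\mathbb X|=o(\rho^3)$ and bounded geometry on compacts. Only $o(1)$ accuracy is needed here, so this is much softer than the $o(\rho^2)$ control required in the generator computation.
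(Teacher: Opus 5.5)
Your proposal is correct and is essentially the paper's proof. The paper likewise uses \eqref{nbd} of Lemma \ref{Lem nbd} to show $\langle \widetilde{\pi}_{\mathbb X}(\eta),\varphi\rangle = \langle \pi_{\mathbb X}(\eta),\varphi/w^{1/2}\rangle + o(1)$ uniformly in $\eta$ (using $|\mathbb X_{k(\rho)}|=o(\rho^3)$), transfers the initial condition to the test function $\varphi/w^{1/2}$, applies Theorem \ref{mainthm}, and converts back with the same estimate; the uniform bound on $|h_\rho(X)-w^{-1/2}(x(X))|$ that you anticipated is exactly what \eqref{nbd} already provides.
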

\begin{co}\label{Co2}
Under the same assumptions on $M$, $V$ and $\{{\mathbb X}_{k}\}_{k\in \mathbb N}$ as in 
Theorem \ref{mainthm}, assume in addition that 
there exists a base piece $X_{0(k(\rho))} \in \mathbb{X}_{k(\rho)}$ 
such that 
for all $r>0$
\begin{equation}
\sup \left\{ 
 \left| \frac{\mu_{1/2}(X_{0(k(\rho))} )}{\mu_{1/2}(X)}-1 \right| \, ; \, 
 X \in \mathbb{X}_{k(\rho)}, X \subset B(o, r)
 \right\}
  =o_{B(o,r)}(1) \quad \mbox{ as }\rho \searrow 0.
 \label{hatkatei}
\end{equation}
If
\begin{align}
\lim_{\rho \searrow 0} \nu^{k(\rho)}\bigg( \Big \vert \mu_{1/2}(X_{0(k(\rho))})
\big \langle
\hat{\pi}_{{\mathbb X}_{k(\rho)}}({\xi}^{\rho}(0)),
\varphi
\big \rangle
-\int_{M} u_{0}(x) {\varphi}(x)\mu_{1/2}(dx) \Big \vert >\varepsilon \bigg)=0
\label{Co2 ini}
\end{align}
for any $\varepsilon>0$ and $\varphi \in C^{\infty}_{c}(M)$, then
\begin{align}
\lim_{\rho \searrow 0} {\mathbb P}_{\nu^{k(\rho)}} \bigg( \Big \vert 
\mu_{1/2}(X_{0(k(\rho))})
\big \langle
\hat{\pi}_{{\mathbb X}_{k(\rho)}}({\xi}^{\rho}(t)),
\varphi
\big \rangle
-\int_{M} u(t,x) {\varphi}(x){\mu_{1/2}}(dx) \Big \vert >\varepsilon \bigg)=0
\label{Co2 claim}
\end{align}
for every $t\in [0, T]$, $\varepsilon>0$ and $\varphi \in C^{\infty}_{c}(M)$.
\end{co}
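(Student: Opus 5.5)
The statement to prove is Corollary \ref{Co2}. My plan is to deduce it from Corollary \ref{Co1}, which I take as given together with Theorem \ref{mainthm}, by comparing the two empirical measures directly. The key identity is
\[
\mu_{1/2}(X_{0(k(\rho))})\langle \hat\pi_{\mathbb X_{k(\rho)}}(\eta),\varphi\rangle-\langle\widetilde\pi_{\mathbb X_{k(\rho)}}(\eta),\varphi\rangle
=\sum_{X\in\mathbb X_{k(\rho)}}\eta(X)\varphi(x(X))\mu_{1/2}(X)\Big(\frac{\mu_{1/2}(X_{0(k(\rho))})}{\mu_{1/2}(X)}-1\Big),
\]
valid for every configuration $\eta$. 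Once this difference is shown to tend to $0$ deterministically, uniformly in $\eta\in\mathcal X_{k(\rho)}$, the initial hypothesis \eqref{Co2 ini} becomes equivalent to \eqref{Co1 ini}. The conclusion \eqref{Co1 claim} of Corollary \ref{Co1} then transfers to \eqref{Co2 claim} by the triangle inequality, applied with $\varepsilon/2$ to each term.

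For the uniform bound, fix $\varphi\in C_c^\infty(M)$ and choose $r>0$ with $\operatorname{supp}\varphi\subset B(o,r)$. A term in the sum is nonzero only if $x(X)\in\operatorname{supp}\varphi$. Since $|\mathbb X_{k(\rho)}|\to0$, any such cell satisfies $X\subset B(o,r+1)$ once $\rho$ is small. Using $0\le\eta\le1$, the absolute value of the difference is therefore bounded by
\[
\|\varphi\|_\infty\,\sup_{X\subset B(o,r+1)}\Big|\frac{\mu_{1/2}(X_{0(k(\rho))})}{\mu_{1/2}(X)}-1\Big|\sum_{X\subset B(o,r+1)}\mu_{1/2}(X).
\]
Because the cells are disjoint, the last sum is at most $\mu_{1/2}(B(o,r+1))<\infty$. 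Assumption \eqref{hatkatei}, applied with radius $r+1$, makes the supremum $o(1)$. Hence the difference tends to $0$ uniformly in $\eta$.

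This holds pathwise at time $0$ and at every time $t$, under both $\nu^{k(\rho)}$ and $\mathbb P_{\nu^{k(\rho)}}$, so no probabilistic estimate is needed. The only point requiring care is the localization step: every cell that meets $\operatorname{supp}\varphi$ must lie inside a fixed ball, so that \eqref{hatkatei} can be applied at a single radius. This follows from assumption \textbf{(B)} together with $|\mathbb X_{k(\rho)}|\searrow0$.
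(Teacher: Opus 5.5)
Your proposal is correct and is essentially the paper's proof. The paper likewise shows that \eqref{hatkatei} gives $\mu_{1/2}(X_{0(k(\rho))})\langle\hat\pi_{\mathbb X_{k(\rho)}}(\eta),\varphi\rangle=\langle\widetilde\pi_{\mathbb X_{k(\rho)}}(\eta),\varphi\rangle+o(1)$ uniformly in $\eta$, passes from \eqref{Co2 ini} to \eqref{Co1 ini}, applies Corollary \ref{Co1}, and transfers back; your localization bound by $\|\varphi\|_\infty\,\mu_{1/2}(B(o,r+1))$ supplies the detail the paper leaves implicit.
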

%
\begin{exm}[A modification of a square partition of $\mathbb{R}^d$]
\label{ex R^d}
The square partition of $\mathbb{R}^d$ is the simplest example of a partition of a manifold. Let $M=\mathbb{R}^d$ and let the weight function $w$ be $1$. For $k \in  \mathbb{N}$, set
\begin{equation*}
\mathbb{X}_k =\left\{ \frac{1}{k} (I+z) \, ; \, z \in  \mathbb{Z}^d \right\},
\end{equation*}
where $I=[0,1)^d$. This is a typical equi-partition of $\mathbb{R}^d$ 
and it is easy to check that 
$\mathbb{X}_k$ satisfies {\bf{(A)}}, {\bf{(B)}} for all $k \in \mathbb{N}$. 
Moreover, by taking $k(\rho)=[ \rho^{-4} ]$, 
{\bf{(C)}} is satisfied.

For a nontrivial example of $\mathbb{R}^d$, 
let us consider a small modification of $\mathbb{X}_k$. 
For $l>0$, $X = k^{-1}(I+z_X) \in \mathbb{X}_k$, and $X^\prime =X+z_i/k$ for some $z_i=(0 , \ldots,\stackrel{i}{1}, \ldots, 0)$, 
define a modification of $X$ and $X^\prime$ by
\begin{equation*}
Y=X \cup \frac{1}{k} \left( \frac{1}{k^l} I+z_X+z_i \right),  \quad Y^\prime =
X^\prime \setminus \frac{1}{k}\left( \frac{1}{k^l} I+z_X+z_i \right)
\end{equation*}
(see Figure \ref{fig mod}).
We denote by $\mathbb{Y}_k$ the partition of $\mathbb{R}^d$ obtained by applying the above modification
to some adjacent pieces in $\mathbb{X}_k$. Then $\mathbb{Y}_k$ satisfies {\bf{(A)}} and {\bf{(B)}} 
for all $k \in \mathbb{N}$. Moreover,  by taking $k(\rho)=[ \rho^{-4}]$, 
{\bf{(C)}} is satisfied if $l \geq d^{-1}$.
\end{exm}

\begin{exm}\label{rei}
Let $(M, \mu_1)$ be a weighted manifold and suppose that there exists a 
reference point $o \in M$ such that for all $k \in \mathbb{N}$,
$B(o,k)$ has an equi-partition with respect to $\mu_1$ with diameter less than $k^{-1}$.
 Define a 
sequence of partitions $\mathbb{X}_k$ of $M$ as follows: 
In $B(o,k)$, take an equi-partition with diameter less than $k^{-1}$ by the assumption. 
Outside $B(o,k)$, take a partition with diameter less than $k^{-1}$ (for instance, take a Voronoi diagram of a maximal $(4k)^{-1}$-net of $M\backslash B(o,k)$). Then $\mathbb{X}_k$ satisfies {\bf{(A)}} and {\bf{(B)}}. Moreover, by taking 
$k(\rho)=[ \rho^{-4} ]$, $\mathbb{X}_{k}$ satisfies {\bf{(C)}}.
\end{exm}

\begin{re}
{\rm{(i)}}~For example, any model manifold $(\mathbb{R}^d, \psi)$ (see \cite{Gri09} and \cite{IK24}) satisfies the condition of Example \ref{rei}. However, it is not clear that any weighted manifold satisfies this condition. Although equi-partition of $B(o,k)$ always exists, the connectivity and 
the estimate of the diameter of the piece are not trivial. 
This is one reason why we allow partitions that are not equi-partitions.
\vspace{1.5mm} \\
{\rm{(ii)}}~We note that an equi-partition with {\bf{(A)}} and {\bf{(B)}} cannot in general be chosen when the manifold is non-compact. This is another reason why we impose condition {\bf{(C)}}. 
Indeed, when $M$ has a cusp, the diameter of a cell of fixed measure
diverges as it gets closer to the cusp, which violates {\bf{(B)}} (see Figure \ref{fig cusp}). 
\end{re}

\begin{figure}[h]
\centering
\begin{minipage}[b]{0.49\columnwidth}
    \centering\scalebox{0.5}{
\includegraphics{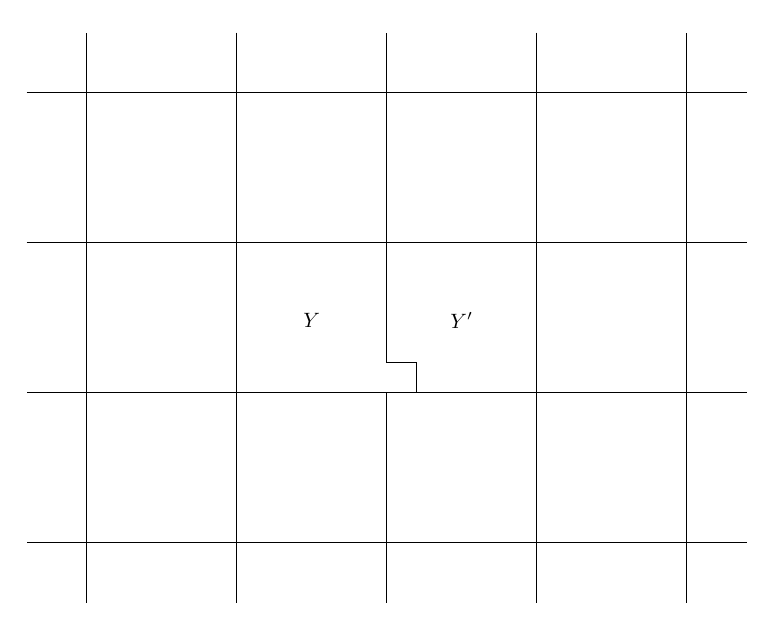}
}
\caption{A modification of a square partition of $\mathbb{R}^2$.}
   \label{fig mod}
\end{minipage}
\begin{minipage}[b]{0.49\columnwidth}
    \centering
\scalebox{0.6}{
\includegraphics{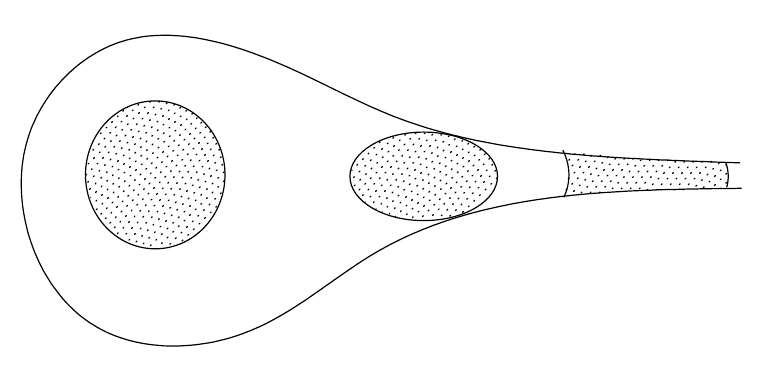}
}
\caption{Pieces of equal measure on a manifold with a cusp}
    \label{fig cusp}
\end{minipage}
\end{figure}
\section{Proof of Theorem \ref{mainthm}}

We prove Theorem \ref{mainthm} in this section.
The proof is based on standard arguments for proving the hydrodynamic limit
for symmetric exclusion processes, cf. \cite[Chapter 4]{KL99} and \cite[Chapter 8]{Sep08}.
We first sketch the proof of Theorem \ref{mainthm}.
Note that $\{\pi_{\bb X_{k(\rho)}}(\xi^\rho(\cdot))\}$
is a $D([0,T], \mc M_+(M))$-valued random variable.
We denote its distribution by ${\mathbb Q}^\rho$, which is a probability measure on $D([0,T], \mc M_+(M))$.
We shall prove that the family $\{{\mathbb Q}^\rho\}_{\rho>0}$ is relatively compact
and every limit point is concentrated on the set of weak solutions to \eqref{hdleq}. The precise statements are given in Lemmas \ref{tight} and \ref{cha}, respectively.
The proof of Theorem \ref{mainthm} is completed by invoking the uniqueness
result for the hydrodynamic equation \eqref{hdleq},
which will be established in Theorem \ref{thm:uniq}.
Because the manifold is non-compact, the required uniqueness result is proved separately in Section \ref{uni}.

\subsection{Preliminary computations}

In this subsection, we provide several estimates that appear in the following subsections.
We begin with simple but repeatedly used estimates.

\begin{lm}\label{Lem nbd}
Let $X \in \mathbb{X}$ be a piece and $\rho>0$ so that the exponential map at $x(X)$ 
$\exp_{x(X)}: T_{x(X)} M \rightarrow M$ is diffeomorphic on 
$B^{\mathbb{R}^d}(\rho +|\mathbb{X}|) \subset T_{x(X)} M$. Then
the following estimates hold:
\begin{align}
&\frac{\mu_{1/2} (\mathcal{N}_\rho (X))}{\omega_d \rho^d}
=w^{1/2} (x(X)) 
+\left( \Delta \left( w^{1/2} \right) (x(X))-\frac{w^{1/2}(x(X)) 
\mathrm{Scal}_g(x(X)) }{3} \right)
\frac{\rho^2}{2(d+2)} 
\nonumber \\
&\mbox{ }\hspace{25mm} +O_{B(x(X), \rho+|\mathbb{X}|) } \Big( \frac{|\mathbb{X}|}{\rho} + \rho^3 \Big) ,
\label{nbd} \\
&w^{1/2} (x(X))\mu_{1/2} (X)=\big(1+ O_{B(x(X), |\mathbb{X}|)}( |\mathbb{X}|) \big) \mu_1(X),
\label{1/2 to 1}
\end{align}
where $\mathrm{Scal}_g$ is the
scalar curvature of $(M,g)$. 
\end{lm}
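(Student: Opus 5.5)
We prove \eqref{nbd} by comparing $\mathcal{N}_\rho(X)$ with the geodesic ball $B(x(X),\rho)$ and expanding the $\mu_{1/2}$-volume of this ball in normal coordinates. We prove \eqref{1/2 to 1} by a zeroth-order Taylor bound on $w^{1/2}$ over the small piece $X$.

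\emph{Step 1: sandwich $\mathcal{N}_\rho(X)$ between two balls.} Write $x_0:=x(X)$. If $Y\sim_\rho X$, then $d_H(X,Y)<\rho$, so every point of $Y$ lies within $\rho$ of $X$. Since $\mathrm{diam}(X)\le|\mathbb{X}|$, this gives $Y\subset B(x_0,\rho+|\mathbb{X}|)$.

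Conversely, suppose $Y$ meets $B(x_0,\rho-2|\mathbb{X}|)$. Every point of $Y$ is then within $\rho-|\mathbb{X}|$ of $x_0$, hence within $\rho-|\mathbb{X}|$ of $X$. Every point of $X$ is within $\rho-|\mathbb{X}|+|\mathbb{X}|$ of a point of $Y$ (up to a harmless slack, shrinking the inner radius by a further constant multiple of $|\mathbb{X}|$). Hence $d_H(X,Y)<\rho$, so $Y\in N_\rho(X)$. Therefore
\[
B(x_0,\rho-3|\mathbb{X}|)\subset \mathcal{N}_\rho(X)\subset B(x_0,\rho+|\mathbb{X}|).
\]

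\emph{Step 2: cost of the sandwich.} $w^{1/2}$ and the metric are bounded above and below on $K:=B(x_0,\rho+|\mathbb{X}|)$, where $\exp_{x_0}$ is a diffeomorphism by assumption. So the $\mu_{1/2}$-measure of the annulus between the two balls is $O_K(\rho^{d-1}|\mathbb{X}|)$. Dividing by $\omega_d\rho^d$ gives the error $O_K(|\mathbb{X}|/\rho)$.

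\emph{Step 3: expansion of the ball.} It remains to show
\[
\frac{\mu_{1/2}(B(x_0,\rho))}{\omega_d\rho^d}=w^{1/2}(x_0)+\Big(\Delta w^{1/2}(x_0)-\tfrac{1}{3}w^{1/2}(x_0)\mathrm{Scal}_g(x_0)\Big)\frac{\rho^2}{2(d+2)}+O_K(\rho^3).
\]
In normal coordinates centered at $x_0$, we have $\sqrt{\det g}(v)=1-\tfrac16\mathrm{Ric}_{ij}v^iv^j+O(|v|^3)$. We also expand
\[
w^{1/2}(\exp v)=w^{1/2}(x_0)+\partial_iw^{1/2}\,v^i+\tfrac12\partial_{ij}w^{1/2}\,v^iv^j+O(|v|^3),
\]
where $\partial_{ij}$ at the origin equals the Hessian, so its trace is $\Delta w^{1/2}(x_0)$.

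We integrate the product over the Euclidean ball $|v|<\rho$. Odd terms vanish, and we use $\int_{|v|<\rho}v^iv^j\,dv=\delta_{ij}\,\omega_d\rho^{d+2}/(d+2)$. The quadratic contributions are then $\tfrac12\Delta w^{1/2}\cdot\frac{\rho^2}{d+2}$ and $-\tfrac16 w^{1/2}\mathrm{Scal}\cdot\frac{\rho^2}{d+2}$. This yields exactly the stated coefficient, namely $\frac{\rho^2}{2(d+2)}\big(\Delta w^{1/2}-\tfrac13 w^{1/2}\mathrm{Scal}\big)$.

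The remainders are controlled by third derivatives of $w^{1/2}$ and curvature on $K$, which gives $O_K(\rho^3)$. Combining with Step 2 proves \eqref{nbd}.

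\emph{Step 4: the second estimate.} For $y\in X$ we have $d(y,x_0)\le|\mathbb{X}|$. Smoothness and positivity of $w$ on $B(x_0,|\mathbb{X}|)$ give $w^{1/2}(x_0)=w^{1/2}(y)\big(1+O(|\mathbb{X}|)\big)$. Multiplying by $w^{1/2}(y)\,\mathrm{vol}_g(dy)$ and integrating over $X$ yields \eqref{1/2 to 1}.

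The main obstacle is Step 1. The Hausdorff-distance adjacency must be shown to produce a set differing from a geodesic ball only in a shell of width $O(|\mathbb{X}|)$, with all constants uniform on the compact set $K$. The metric-ball comparisons need care, but are elementary once the diameter bound $|\mathbb{X}|<\rho/3$ is used.
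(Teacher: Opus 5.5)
Your proposal is correct and follows essentially the paper's proof: you sandwich $\mathcal{N}_\rho(X)$ between geodesic balls of radii $\rho\pm O(|\mathbb{X}|)$, expand $w^{1/2}\sqrt{\det g}$ in normal coordinates to obtain the $\rho^2/(2(d+2))$ coefficient, and use a first-order Taylor bound on $w^{\pm1/2}$ over $X$ for \eqref{1/2 to 1}. The only differences are bookkeeping: the paper uses the sharper inner radius $\rho-|\mathbb{X}|$, and instead of your separate annulus estimate it chooses $\varepsilon\in[-1,1]$ with $\mu_{1/2}(\mathcal{N}_\rho(X))=\mu_{1/2}(B(x(X),\rho+\varepsilon|\mathbb{X}|))$ and expands that single ball; both give the same $O(|\mathbb{X}|/\rho)$ error.
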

\begin{proof}
First, we prove (\ref{nbd}). By the definition of $\mathcal{N}_\rho (X)$, 
$$
B(x(X), \rho-|\mathbb{X}|) \subset \mathcal{N}_\rho (X) \subset B(x(X), \rho+|\mathbb{X}|).
$$ 
This implies that there exists $-1\leq  \varepsilon \leq 1$ such that 
\begin{equation*}
\mu_{1/2} (\mathcal{N}_\rho (X)) =\mu_{1/2} \left( B(x(X), \rho+\varepsilon |\mathbb{X}| )\right).
\end{equation*}
Using the normal coordinates $y=(y^1, \ldots , y^d)$ centered at $x(X)$, 
by the assumption of the exponential map, we obtain
\begin{align*}
&\mu_{1/2} \left( B(x(X), \rho+\varepsilon |\mathbb{X}| )\right)=
\int_{B_{\mathbb{R}^d}( \rho+\varepsilon |\mathbb{X} | )} 
w^{1/2}(y)\sqrt{ \det g (y) } dy.
\end{align*}
Applying Taylor's expansion formula, we obtain
\begin{align*}
\mu_{1/2} &
\big( B(x(X), \rho+\varepsilon |\mathbb{X}| )\big)\\
&=
\int_{B_{\mathbb{R}^d}( \rho+\varepsilon |\mathbb{X} | )} 
\left(
w^{1/2}(0)+
\partial_i \left(w^{1/2}\right) (0) y^i+
\frac{1}{2}\partial_i\partial_j \left(w^{1/2}\right)(0) y^i y^j +O_{B(x(X), \rho+|\mathbb{X}|) }(|y|^3) 
 \right)
 \nonumber \\
 &
 \hspace{30mm}
 \times
 \left(1-\frac{1}{6} \mathrm{Ric}_{ij}(0) y^i y^j +O_{B(x(X), \rho+|\mathbb{X}|)}(|y|^3) \right) dy\\
 &=w^{1/2}(x(X)) \omega_d (\rho +\varepsilon |X| )^d \\
&\quad
+\left( 
 \frac{1}{2} \Delta (w^{1/2}) (x(X)) - \frac{w^{1/2}(x(X))}{6} 
 \mathrm{Scal}_g(x(X)) 
 \right)\frac{ \omega_d  (\rho+\varepsilon |\mathbb{X} | )^{d+2}}{d+2} \\
 &\quad
 +O_{B(x(X), \rho+|\mathbb{X}|)}( \rho^{d+3}),
\end{align*} 
which concludes (\ref{nbd}). Here $\mathrm{Ric}_{ij}(0)$
 is the $(i,j)$-component of the Ricci curvature tensor at $x(X)$.

Next, we prove (\ref{1/2 to 1}). Applying Taylor's expansion formula 
on the normal coordinates centered at $x(X)$, 
there exists $y^\prime \in B^{\mathbb{R}^d}(|\mathbb{X}|) \subset 
T_{x(X)}M$ such that 
\begin{align*}
\mu_{1/2}(X)&= \int_{\exp^{-1}_{x(X)}(X)}  w^{1/2}(y) \sqrt{ \det g (y)} dy \\
&= \int_{\exp^{-1}_{x(X)}(X)} w^{-1/2}(y) w(y) \sqrt{ \det g (y) } dy\\
&= \int_{\exp^{-1}_{x(X)}(X)}  \left( w^{-1/2}(0) +\partial_i (w^{-1/2}) (y^\prime) y^i 
\right) w(y) \sqrt{ \det g (y) } dy.
\end{align*}
Then there exists a constant $C=C(X)$ such that  
\begin{align*}
&\left\vert w^{1/2}(x(X)) \mu_{1/2}(X)-\mu_1(X) \right\vert
\nonumber \\
&~~~
\leq  
Cw^{1/2}(x(X)) \| \nabla (w^{-1/2} ) \|_{L^\infty (B(x(X), \mathrm{diam}(X))}
\mathrm{diam} (X) 
\mu_1 (X),
\end{align*}
which concludes (\ref{1/2 to 1}).
\end{proof}

We next show that
\eqref{hobo toubunkatsu} in condition {\bf (C)}
still holds by replacing $\mu_1$ with $m_{1/2}$.

\begin{lm}
\label{hobo 1/2}
 If $\mathbb{X}_{k}$ satisfies condition {\bf (C)}, then
 for every $r>0$, 
\begin{equation*}
\sup\left\{ \left|  \frac{m_{1/2}(Z)}{m_{1/2}(Y)} -1 \right| \, ;\,
Y, Z \in \mathbb{X}_{k(\rho)}, Y,Z \subset B(o, r), Y\sim_{\rho} Z,\right\} =
o_{B(o,r+2)}(\rho^2)
\quad \mbox{ as }\rho \searrow 0.
\end{equation*}
\end{lm}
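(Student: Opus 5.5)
Since $m_{1/2}(Z)=\mu_{1/2}(Z)\,\mu_{1/2}(\mathcal N_\rho(Z))$, I would factor the ratio as
\[
\frac{m_{1/2}(Z)}{m_{1/2}(Y)}=\frac{\mu_{1/2}(Z)}{\mu_{1/2}(Y)}\cdot\frac{\mu_{1/2}(\mathcal N_\rho(Z))}{\mu_{1/2}(\mathcal N_\rho(Y))}
\]
and prove that each factor equals $1+o_{B(o,r+2)}(\rho^2)$, uniformly over adjacent pieces $Y,Z\subset B(o,r)$. The product then has the same form. Throughout I take $\rho$ small enough that $\rho+|\mathbb X_{k(\rho)}|<1$. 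All relevant sets, namely $Y$, $Z$, $\mathcal N_\rho(Y)$, $\mathcal N_\rho(Z)$ and the balls appearing in Lemma~\ref{Lem nbd}, then lie in the compact set $\overline{B(o,r+2)}$. There the injectivity radius is bounded below, so Lemma~\ref{Lem nbd} applies at every reference point once $\rho$ is small. On the same set $w$, $\nabla w$, $\nabla^2 w$ and $\mathrm{Scal}_g$ are bounded and $w$ is bounded below, and this is why the error term carries the subscript $B(o,r+2)$.

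\textbf{First factor.} By \eqref{1/2 to 1},
\[
\frac{\mu_{1/2}(Z)}{\mu_{1/2}(Y)}=\frac{w^{1/2}(x(Y))}{w^{1/2}(x(Z))}\cdot\frac{\mu_1(Z)}{\mu_1(Y)}\cdot\big(1+O(|\mathbb X|)\big).
\]
Condition {\bf (C)} gives $\mu_1(Z)/\mu_1(Y)=1+o(\rho^2)$, and $|\mathbb X|=o(\rho^3)$. The delicate part is the ratio of weights, because adjacent reference points can be up to distance $\rho+2|\mathbb X|$ apart, so the crude bound is only $1+O(\rho)$. I would therefore not use the two-sided version of \eqref{1/2 to 1}. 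Instead I would go back to its proof: the error there is really $O(\mathrm{diam}(X))$ relative to $w(x(X))$, which gives $\mu_{1/2}(X)=w^{-1/2}(x')\,\mu_1(X)\,(1+O(|\mathbb X|))$ for any $x'\in X$. The remaining issue is then that $w^{-1/2}$ varies by $O(\rho)$ between neighbouring cells.

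\textbf{Second factor.} By \eqref{nbd}, $\mu_{1/2}(\mathcal N_\rho(Z))/(\omega_d\rho^d)=w^{1/2}(x(Z))+O(\rho^2)$, and similarly for $Y$. The second-order coefficients at $x(Y)$ and $x(Z)$ differ by $O(\rho)\cdot\rho^2=O(\rho^3)$, and $|\mathbb X|/\rho=o(\rho^2)$. Hence this factor is $\dfrac{w^{1/2}(x(Z))}{w^{1/2}(x(Y))}\,\big(1+o(\rho^2)\big)$.

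\textbf{Key step and main obstacle.} The weight ratio from the second factor exactly cancels the inverse weight ratio from the first factor. Multiplying the two factors, the first-order $O(\rho)$ discrepancies cancel, leaving $m_{1/2}(Z)/m_{1/2}(Y)=1+o(\rho^2)$. This cancellation is precisely the reason for the $1/2$-weights in $m_{1/2}$. The main obstacle is bookkeeping: I must make sure that every discarded remainder is genuinely $o(\rho^2)$, uniformly over all adjacent pairs in $B(o,r)$. This holds because $|\mathbb X|=o(\rho^3)$ controls both the $|\mathbb X|$ and $|\mathbb X|/\rho$ terms, and the $O(\rho^3)$ Taylor errors carry constants depending only on $\overline{B(o,r+2)}$.
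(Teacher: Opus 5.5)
Your proposal is correct and follows essentially the paper's proof. Both arguments use (\ref{1/2 to 1}) together with condition \textbf{(C)} for the ratio of pieces, and (\ref{nbd}) for the ratio of neighbourhoods, with the factors $w^{1/2}(x(Y))$ and $w^{1/2}(x(Z))$ cancelling exactly. The paper builds this cancellation in from the start by writing $m_{1/2}(X)=w^{1/2}(x(X))\mu_{1/2}(X)\cdot \mu_{1/2}(\mathcal{N}_\rho(X))/w^{1/2}(x(X))$, so that each of its two factors is genuinely $1+o(\rho^2)$.

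With your grouping, the opening claim that each factor is $1+o(\rho^2)$ is false: each is only $1+O(\rho)$, as your key step itself recognizes. The detour back into the proof of (\ref{1/2 to 1}) is also unnecessary, because that estimate already evaluates $w$ at the reference point and so cancels exactly against (\ref{nbd}).
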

\begin{proof}
For $X \in \mathbb{X}_{k(\rho)}$, set
$$
\mu_{1/2}^{w}(X)=
\frac{\mu_{1/2}(\mathcal{N}_\rho (X) )}{ w^{1/2}(x(X)}.
$$
Then we have for $Y, Z \in \mathbb{X}_{k(\rho)}$ in concern
\begin{align*}
\left\vert \frac{m_{1/2}(Z)}{m_{1/2}(Y)}-1 \right\vert =&
\left\vert \frac{w^{1/2}(x(Z)) \mu_{1/2}(Z) \mu_{1/2}^{w}(Z)}
{w^{1/2}(x(Y)) \mu_{1/2}(Y)  \mu_{1/2}^{w}(Y)}  -1
\right\vert\\
\leq &\frac{\mu_{1/2}^{w}(Z)}{\mu_{1/2}^{w}(Y)}
\left\vert \frac{w^{1/2}(x(Z)) \mu_{1/2}(Z) }
{w^{1/2}(x(Y)) \mu_{1/2}(Y) }  -1
\right\vert
+
\left\vert \frac{\mu_{1/2}^{w}(Z)}{\mu_{1/2}^{w}(Y)} -1
\right\vert\\
=& \mathrm{(I)}+\mathrm{(II)}.
\end{align*}
We first estimate (II).
By Lemma \ref{Lem nbd}, we obtain
\begin{align*}\mu_{1/2}^w (X)&= \omega_d \rho^d \left( 1+ F(x(X)) \rho^2
+O_{B(x(X), \rho+|\mathbb{X}_{k(\rho)}|)}\left(\frac{|\mathbb{X}_{k(\rho)} |}{\rho} + \rho^3\right) \right),
\end{align*}
where
$$F(x)=\frac{1}{2(d+2)} \left( \frac{\Delta \left( w^{1/2} \right) (x)}{w^{1/2}(x)}
 -\frac{\mathrm{Scal}_{g}(x) }{3} \right).$$
Since $B(x(Y), \rho+|\mathbb{X}_{k(\rho)}|) , B(x(Z), \rho+|\mathbb{X}_{k(\rho)}|) \subset
B(o, r+\rho +|\mathbb{X}_{k(\rho)}|)$, we obtain
\begin{align*}
\mathrm{(II)}=&\left\vert \frac{ \mu_{1/2}^w (Z)  -\mu_{1/2}^w (Y)}{\mu_{1/2}^w (Y)}
\right\vert \\
=& \left\vert \frac{
\left(F(x(Z))-F(x(Y)) \right)\rho^2+ O_{B(o, r+\rho+|\mathbb{X}_{k(\rho)}|)}
\left(\frac{|\mathbb{X}_{k(\rho)} |}{\rho} + \rho^3\right)
}
{1+F(x(Y))\rho^2+O_{B(x(Y), \rho+|\mathbb{X}_{k(\rho)}|)}
\left(\frac{|\mathbb{X}_{k(\rho)} |}{\rho} + \rho^3\right)}
\right\vert \\
\leq &
 \frac{
 \| \nabla F\|_{L^\infty(B(x(Y), \rho+|\mathbb{X}_{k(\rho)}|))}\rho^3+
 O_{B(o, r+\rho+|\mathbb{X}_{k(\rho)}|)}\left(\frac{|\mathbb{X}_{k(\rho)} |}{\rho} + \rho^3\right)
}
{1+F(x(Y))\rho^2+O_{B(x(Y), \rho+|\mathbb{X}_{k(\rho)}|)}\left(\frac{|\mathbb{X}_{k(\rho)} |}{\rho} + \rho^3\right)}.
\end{align*}
Taking $\rho>0$ small enough,
the assumption that $|\mathbb{X}_{k(\rho)}|=o(\rho^3)$ implies $\mathrm{(II)}=
o_{B(o, r+2)}(\rho^2)$.

Next, we estimate (I). By the estimate of (II), it suffices to prove that
$$
\left\vert \frac{w^{1/2}(x(Z)) \mu_{1/2}(Z) }
{w^{1/2}(x(Y)) \mu_{1/2}(Y) }  -1
\right\vert =o_{B(o, r+2)}(\rho^2).
$$
Indeed, by using (\ref{1/2 to 1}),
\begin{align*}
\left\vert \frac{w^{1/2}(x(Z)) \mu_{1/2}(Z) }
{w^{1/2}(x(Y)) \mu_{1/2}(Y) }  -1
\right\vert  =&
\left\vert \frac{w^{1/2}(x(Z)) \mu_{1/2}(Z) - w^{1/2}(x(Y)) \mu_{1/2}(Y)}
{w^{1/2}(x(Y)) \mu_{1/2}(Y) }  
\right\vert \\
=&
\left\vert
\frac{(1+O_{ B(x(Z), |\mathbb{X}_{k(\rho)}|) } (|\mathbb{X}_{k(\rho)}| ))\mu_1(Z) -(1+O_{B(x(Y), |\mathbb{X}_{k(\rho)}| )})\mu_1(Y) }{(1+O_{B(x(Y), |\mathbb{X}_{k(\rho)}| ) }(|\mathbb{X}_{k(\rho)}|) )\mu_1(Y) } \right\vert \\
\leq &~
\frac{(1+O_{ B(x(Z), |\mathbb{X}_{k(\rho)}|) } (|\mathbb{X}_{k(\rho)}| ))}
{(1+O_{B(x(Y), |\mathbb{X}_{k(\rho)}| ) }(|\mathbb{X}_{k(\rho)}|) )}
\left\vert \frac{\mu_1(Z)}{\mu_1(Y)} -1 \right\vert \\
& +\left\vert
\frac{\left( O_{B(x(Y), |\mathbb{X}_{k(\rho)}| )}(|\mathbb{X}_{k(\rho)}|)+
O_{B(x(Z), |\mathbb{X}_{k(\rho)}| )}(|\mathbb{X}_{k(\rho)}|) \right) }{(1+O_{B(x(Y), |\mathbb{X}_{k(\rho)}| ) }(|\mathbb{X}_{k(\rho)}|) ) } \right\vert.
\end{align*}
Since $B(x(Y), |\mathbb{X}_{k(\rho)}|), B(x(Z), |\mathbb{X}_{k(\rho)}|) \subset  B(o,r+|\mathbb{X}_{k(\rho)}|)$,
taking $\rho>0$ small enough, condition {\bf{(C)}} implies that  
$\mathrm{(I)}=o_{B(o, r+2)}(\rho^2)$. Combining estimates (I) and (II),
the proof of the lemma is completed.
\end{proof}

The following lemma gives the approximation of the discrete generator by the continuum Laplacian $\Delta_1$ as $\rho\searrow0$.

\begin{lm}[cf. \mbox{\cite[Theorem 3.2]{IK24}}]
\label{one particle}
Let $\varphi \in C^{\infty}_{c}(M)$, and 
take $\rho>0$ small enough so that 
\begin{equation} \rho <\frac{1}{2}\min \left\{ \mathrm{inj}_M (U_2(\supp \varphi)), 1 \right\},
\label{inj}
\end{equation}
where $\mathrm{inj}_M (U_2(\supp \varphi))$
is the infimum of the injectivity radius in $U_2(\supp \varphi)$.
Then for any partition ${\mathbb X}$ satisfying
$| \mathbb{X} | <\rho/3$, there exist 
positive constants 
${C}_{1}$ and ${C}_{2}$
depending 
on $\|\varphi \|_{C^3}$ and $U_2 (\supp \varphi)$ 
such that for all $X \in \mathbb{X}$
\begin{equation*}
\left| \frac{1}{\delta} \left( \sum_{d_H(X,Y)<\rho} p_{1/2, 0}(X, Y)  
\varphi (x(Y)) - \varphi (x(X)) \right) - \Delta_{1} \varphi  (x(X)) \right| \leq C_1\frac{|\mathbb{X}|}{\rho^2} +C_2 \rho.\end{equation*}
\end{lm}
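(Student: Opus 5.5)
We prove the estimate by a second-order Taylor expansion of $\varphi$ in normal coordinates centred at $x=x(X)$, combined with the volume expansion (\ref{nbd}) of Lemma \ref{Lem nbd}. Note that $p_{1/2,0}(X,Y)=\mu_{1/2}(Y)/\mu_{1/2}(\mathcal N_\rho(X))$ for $Y\in N_\rho(X)$, since $\beta=0$. These weights sum to one, so the bracket equals
\[
\frac{1}{\mu_{1/2}(\mathcal N_\rho(X))}\sum_{Y\sim_\rho X}\mu_{1/2}(Y)\big(\varphi(x(Y))-\varphi(x)\big).
\]
If $U_2(\supp\varphi)\cap\mathcal N_\rho(X)=\emptyset$, every term vanishes and $\Delta_1\varphi(x)=0$, so there is nothing to prove. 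Otherwise, by (\ref{inj}) the set $\mathcal N_\rho(X)\subset B(x,\rho+|\mathbb X|)$ lies inside the domain where $\exp_x$ is a diffeomorphism and inside a fixed compact set $K:=U_2(\supp\varphi)$. All $O(\cdot)$ constants below will therefore depend only on $K$ and $\|\varphi\|_{C^3}$.

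\textbf{Step 1: replace the Riemann sum by an integral.} For $y\in Y$ we have $d(y,x(Y))\le|\mathbb X|$, hence $|\varphi(x(Y))-\varphi(y)|\le\|\nabla\varphi\|_\infty|\mathbb X|$. Replacing $\varphi(x(Y))$ by $\varphi(y)$ inside $\int_Y\cdot\,d\mu_{1/2}$ produces the quantity
\[
\frac{1}{\mu_{1/2}(\mathcal N_\rho(X))}\int_{\mathcal N_\rho(X)}\big(\varphi(y)-\varphi(x)\big)\mu_{1/2}(dy),
\]
with error $O(|\mathbb X|)$; after division by $\delta\sim\rho^2$ this becomes $O(|\mathbb X|/\rho^2)$. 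Next, $\mathcal N_\rho(X)$ differs from $B(x,\rho)$ only inside the annulus $B(x,\rho+|\mathbb X|)\setminus B(x,\rho-|\mathbb X|)$, whose $\mu_{1/2}$-measure is $O(\rho^{d-1}|\mathbb X|)$. On that annulus $|\varphi(y)-\varphi(x)|=O(\rho)$. Dividing by $\mu_{1/2}(\mathcal N_\rho(X))\asymp\rho^d$ and then by $\delta$ gives an error $O(|\mathbb X|/\rho^2)$. These two replacements are the source of the constant $C_1$.

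\textbf{Step 2: expand on the ball.} In normal coordinates write $\varphi(y)-\varphi(0)=\partial_i\varphi(0)y^i+\tfrac12\partial_i\partial_j\varphi(0)y^iy^j+O(|y|^3)$, and expand the density as $w^{1/2}\sqrt{\det g}=w^{1/2}(0)+\partial_k(w^{1/2})(0)y^k+O(|y|^2)$. Integrating over $B_{\mathbb R^d}(\rho)$, odd moments vanish and $\int y^iy^j\,dy=\delta_{ij}\omega_d\rho^{d+2}/(d+2)$. This yields
\[
\frac{\omega_d\rho^{d+2}}{d+2}\Big(\tfrac12 w^{1/2}(x)\Delta\varphi(x)+g(\nabla w^{1/2},\nabla\varphi)(x)\Big)+O(\rho^{d+3}),
\]
using that the Christoffel symbols vanish at the origin, so the coordinate Hessian trace is $\Delta\varphi(x)$. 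Dividing by $\mu_{1/2}(\mathcal N_\rho(X))=\omega_d\rho^d w^{1/2}(x)(1+O(\rho^2+|\mathbb X|/\rho))$ from (\ref{nbd}), and then by $\delta=\rho^2/(2(d+2))$, gives
\[
\Delta\varphi+\frac{2g(\nabla w^{1/2},\nabla\varphi)}{w^{1/2}}=\Delta\varphi+\frac{g(\nabla w,\nabla\varphi)}{w}=\Delta_1\varphi,
\]
up to an error $O(\rho)+O(|\mathbb X|/\rho^2)$. The factor 2 is exactly what makes the index $1/2$ in the walk produce the index $1$ in the Laplacian.

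\textbf{Main obstacle.} The delicate point is bookkeeping the cubic remainders so that the error is truly $O(\rho)$ after division by $\rho^2$. This requires that the third-order terms in $y$, coming both from $\varphi$ and from the product of the first-order term of $\varphi$ with the second-order term of the density, are all controlled by $\rho^{d+3}$ uniformly in $x\in K$. I would handle this by bounding $\|\varphi\|_{C^3}$, $w^{\pm1/2}$ and its derivatives, and the metric coefficients in normal coordinates uniformly on the compact set $K$, which is possible by (\ref{inj}). The first-order term of $\varphi$ does not cancel against the zeroth-order density alone, so the gradient cross term computed in Step 2 must be kept explicitly.
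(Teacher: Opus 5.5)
Your proposal is correct and follows essentially the same route as the paper: you reduce the discrete average to the ball integral $\psi(x(X))$, normalized by $\mu_{1/2}(\mathcal{N}_\rho(X))$, at a cost of $O(|\mathbb{X}|/\rho^2)$; you then Taylor-expand in normal coordinates and divide using (\ref{nbd}) to get $\Delta\varphi+2g(\nabla w^{1/2},\nabla\varphi)/w^{1/2}=\Delta_1\varphi$ up to $O(\rho)$. The differences are minor. You prove the first reduction directly (Lipschitz replacement on each cell, plus the annulus estimate from $B(x,\rho-|\mathbb{X}|)\subset\mathcal{N}_\rho(X)\subset B(x,\rho+|\mathbb{X}|)$), whereas the paper imports it from \cite[Theorem 3.2]{IK24}. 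Your case split is more cleanly stated as follows: the case $\mathcal{N}_\rho(X)\cap\supp\varphi=\emptyset$ is trivial, and otherwise $B(x,\rho+|\mathbb{X}|)\subset U_2(\supp\varphi)$, which is exactly what makes (\ref{inj}) applicable at $x$.
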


\begin{proof}
It suffices to prove the lemma when $X \subset U_2(\supp \varphi)$. 
First, the argument in the proof of \cite[Theorem 3.2]{IK24} gives
\begin{align}
\left| \frac{1}{\delta} \left( \sum_{d_H(X,Y)<\rho} p_{1/2, 0}(X, Y)  
\varphi (x(Y)) - \varphi (x(X))\right)
-\psi(x(X))
\right| 
\leq C\frac{|\mathbb{X} | }{\rho^2},
\label{weight1}
\end{align}
where
\begin{align*}
\psi(x(X)):=\frac{1}{\delta \mu_{1/2} ( \mathcal{N}_\rho (X)) }
\int_{B(x(X), \rho)}
\left( \varphi (y)- \varphi (x(X)) \right) w^{1/2} (y) d\mathrm{vol}(y)
\end{align*}
and $X=X(x)$, which is the unique piece in $\mathbb{X}$ containing $x$.
By the assumption of $\rho>0$,
 applying Taylor's expansion formula in the normal coordinates 
at $x(X)$, we obtain
\begin{align*}
&\psi(x(X))
\nonumber \\
&=\frac{1}{\delta \mu_{1/2}(\mathcal{N}_\rho (X) )}
\int_{B^{\mathbb{R}^d}(\rho)}
\left( \partial_i \varphi(0) y^i +\frac{1}{2} \partial_i \partial_j \varphi (0) y^iy^j +O_{B(x(X), \rho+|\mathbb{X}|)}(y^3) 
\right) \\
&\quad\times 
\left( w^{1/2} (0)+\partial_i (w^{1/2}) (0)y^i 
+O(y^2) \right)  
\left( 1-\frac{1}{6} \mathrm{Ric}_{ij}(0) y^iy^j +O_{B(x(X), \rho+|\mathbb{X}|)}(y^3) \right) dy \\
&=\frac{1}{\delta \mu_{1/2} ( \mathcal{N}_\rho (X)) }
\left\{ 
\partial_i \varphi (0) w^{1/2}(0) 
 \int_{B^{\mathbb{R}^d}(\rho)} y^i dy \right.\\
& \left.  \quad+\left( \partial_i \varphi (0) \partial_j (w^{1/2}) (0) +\frac{1}{2} 
 w^{1/2}(0) \partial_i \partial_j \varphi (0) \right) 
 \int_{B^{\mathbb{R}^d}(\rho) }y^iy^j dy +O_{B(x(X), \rho+|\mathbb{X}|)}(\rho^{d+3})
 \right\}.
\end{align*}
Using the estimate of $\mu_{1/2}(\mathcal{N}_\rho (X))$ in (\ref{nbd}), 
taking $\rho>0$ 
small enough, we obtain
\begin{align}
\left| \psi(x(X))
- \frac{1}{w^{1/2}(x(X))} g_{x(X)} (\nabla \varphi, \nabla (w^{1/2}) ) -\Delta_{1/2} \varphi (x(X))
\right| =O_{B(x(X), 2)}(\rho).
\label{weight2}
\end{align}
Since
\begin{equation*}
\Delta_1 \varphi (x)= \frac{1}{w^{1/2}(x)} g_x (\nabla \varphi, \nabla (w^{1/2}) ) + \Delta_{1/2} \varphi (x),
\end{equation*}
combining the estimates in (\ref{weight1}) and (\ref{weight2}), 
we conclude the proof.
\end{proof}
\subsection{Existence of the exclusion process} 
\label{exclusion}

In this subsection, we fix $\rho>0$, a partition ${\mathbb X}$ of $M$ satisfying
{\bf (A)} and {\bf (B)} with $\vert {\mathbb X} \vert<\rho/3$, and $\alpha, \beta \geq 0$,
and construct the exclusion process associated with the transition operator
${\mathcal L}_{\alpha, \beta}$ given in \eqref{transition2}.
If the number of neighboring pieces were uniformly bounded, that is,
$\sup_{X \in {\mathbb X}} \sharp N_{\rho}(X)<\infty$, then the total jump rates
influencing each fixed piece would be uniformly bounded, and the standard existence
theorem for interacting particle systems \cite[Theorem I.3.9]{Lig85} would apply
directly and would produce a Feller Markov process.
Such a uniform bound holds, for instance, if the discretization is constructed from a
uniformly separated net under a uniform lower bound on the Ricci curvature, thanks to
the classical packing argument of Kanai \cite{Kan85}.
In the present paper, however, we impose neither a curvature bound nor any global
uniformity on the partition, and condition {\bf (A)} guarantees only the local
finiteness $\sharp N_{\rho}(X)<\infty$ for each fixed $X \in {\mathbb X}$.
For this reason, we characterize the dynamics as a solution of the martingale problem
for ${\mathcal L}_{\alpha, \beta}$ acting on the class ${\mathcal F}_{c}$ of bounded
local functions as in the following proposition.

\begin{pr}\label{prop existence}
For every $T>0$ and every probability measure $\nu$ on $\mathcal X$, there
exists a probability measure $\mathbb P_\nu$ on
$D([0,T],\mathcal X)$ under which the canonical coordinate process
$\eta=\{\eta(t)\}_{0\leq t\leq T}$ has initial distribution $\nu$ and
solves the martingale problem for $(\mathcal L_{\alpha, \beta}, \mathcal F_c)$.
Namely, for every $f\in\mathcal F_c$,
\begin{equation}
M_t^f
:=
f(\eta(t))-f(\eta(0))
-\int_0^t
(\mathcal L_{\alpha,\beta}f)(\eta(s))\,ds,
\qquad 0\leq t\leq T,
\label{mart prob}
\end{equation}
is a square-integrable martingale under $\mathbb P_\nu$
with respect to the canonical filtration $(\mathscr F_t)_{0\leq t\leq T}$,
and its predictable quadratic variation is given by
\begin{equation}
\langle M^f\rangle_t
=
\int_0^t
\bigl(
\mathcal L_{\alpha,\beta}(f^2)
-2f\mathcal L_{\alpha,\beta}f
\bigr)(\eta(s))\,ds,
\qquad 0\leq t\leq T.
\label{mart prob qv}
\end{equation}
\end{pr}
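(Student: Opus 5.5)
The construction will go through finite-volume approximation followed by a tightness argument. Fix an exhaustion of $\mathbb X$ by finite sets $\Lambda_1\subset\Lambda_2\subset\cdots$, for instance the pieces contained in $B(o,n)$; each $\Lambda_n$ is finite by {\bf (A)}. Let $\mathcal L^{(n)}$ be the operator \eqref{transition2} with the sum over edges $e$ restricted to those with $o(e),t(e)\in\Lambda_n\cup\{X(\partial)\}$. Sites outside $\Lambda_n$ are then frozen. For each fixed local function only finitely many sites move, and the total jump rate out of any configuration is at most $\sharp\Lambda_n$, since $\sum_{Y}p_{\alpha,\beta}(X,Y)\le1$. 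Hence $\mathcal L^{(n)}$ is a bounded operator on $B_b(\mathcal X)$, and it generates a Feller jump process on the compact space $\mathcal X$, which we give the product topology. Let $\mathbb P^{(n)}_\nu$ be its law on $D([0,T],\mathcal X)$ with initial distribution $\nu$. For bounded generators the standard Dynkin formula shows that $f(\eta(t))-f(\eta(0))-\int_0^t\mathcal L^{(n)}f\,ds$ is a martingale with predictable quadratic variation $\int_0^t(\mathcal L^{(n)}f^2-2f\mathcal L^{(n)}f)\,ds$.

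Next we prove tightness of $\{\mathbb P^{(n)}_\nu\}_n$ in $D([0,T],\mathcal X)$. Because $\mathcal X$ is compact and metrizable via a metric $\sum_j2^{-j}|\eta(X_j)-\zeta(X_j)|$, it suffices to show tightness of each coordinate process $\eta(t,X)$. The key observation is locality. Fix $X\in\mathbb X$. The value $\eta(X)$ changes only through edges touching $X$: edges $(X,Y)$ or $(Y,X)$ with $Y\in N_\rho(X)$, or the killing edge $(X,X(\partial))$. Each such edge has rate at most $1$, so the jump rate of $\eta(X)$ is bounded by $2\sharp N_\rho(X)+1<\infty$, uniformly in $n$. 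Local finiteness is exactly what this needs; no uniform degree bound is required. The number of jumps of $\eta(\cdot,X)$ on $[0,T]$ is therefore stochastically dominated by a Poisson variable with parameter $(2\sharp N_\rho(X)+1)T$, and jump times are controlled by Aldous' criterion, since $\mathbb P(\text{jump in }[\tau,\tau+h])\le Ch$. Finite families of coordinates are handled the same way, which gives tightness. Let $\mathbb P_\nu$ be any limit point.

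To identify the limit, fix $f\in\mathcal F_c$ depending on the sites in a finite set $A$. Then $\mathcal L_{\alpha,\beta}f$ involves only edges touching $A$, which is a finite set by local finiteness. Hence $\mathcal L_{\alpha,\beta}f$ is again a local function, and therefore continuous on $\mathcal X$. Moreover $\mathcal L^{(n)}f=\mathcal L_{\alpha,\beta}f$ as soon as $\Lambda_n\supset A\cup\bigcup_{X\in A}N_\rho(X)$. The same holds for $f^2$. So for large $n$ the martingale $M^{f,(n)}$ coincides with the functional \eqref{mart prob} evaluated under $\mathbb P^{(n)}_\nu$.

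The main obstacle is passing to the limit in the martingale property. Here we use two facts. First, $\mathbb P_\nu$ puts no mass on paths with a jump at a fixed time outside a countable set; in fact at any fixed time, by the Poisson domination. Second, the maps $\eta\mapsto G(\eta(s_1),\dots,\eta(s_k))\,(M_t^f-M_s^f)$ are bounded, with $|\mathcal L f|\le 2\sharp(\text{edges touching }A)\|f\|_\infty$, and $\mathbb P_\nu$-a.s.\ continuous for bounded continuous $G$. Weak convergence then yields $\mathbb E_\nu[G\,(M^f_t-M^f_s)]=0$, so $M^f$ is a martingale.

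The same argument applied to $(M^f_t)^2-\int_0^t(\mathcal L f^2-2f\mathcal Lf)\,ds$, which is bounded because $M^f$ is bounded on $[0,T]$, shows this process is also a martingale. This gives square integrability and identifies the predictable quadratic variation \eqref{mart prob qv}, since the integral is continuous and adapted. The initial law is $\nu$ because the marginal at time $0$ is preserved under weak limits.
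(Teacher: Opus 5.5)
Your proposal is correct, but it takes a different route from the paper. The paper constructs no approximating dynamics. It invokes the abstract existence theorem \cite[Chapter 4, Theorem 5.4 and Remark 5.5]{EK86} for martingale problems on a compact state space and checks only its hypotheses:
\begin{itemize}
\item $\mathcal X$ is compact metrizable;
\item $\mathcal F_c$ is dense in $C(\mathcal X)$ by Stone--Weierstrass;
\item $\mathcal L_{\alpha,\beta}f$ is again local, hence continuous;
\item $\mathcal L_{\alpha,\beta}$ satisfies the positive maximum principle;
\item $\mathcal L_{\alpha,\beta}\mathbf{1}=0$.
\end{itemize}
The quadratic variation then follows from applying the martingale problem to $f$ and $f^2$ and using uniqueness of the Doob--Meyer decomposition.

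You instead build $\mathbb P_\nu$ by hand as a subsequential limit of finite-volume exclusion dynamics. Local finiteness enters through the $n$-uniform bound $2\sharp N_\rho(X)+1$ on the flip rate of $\eta(X)$. You then pass both martingale identities to the limit using a.s.\ continuity of the relevant functionals. The paper's route is much shorter and avoids all Skorokhod-space bookkeeping. Yours is self-contained and exhibits $\mathbb P_\nu$ as a limit of genuine truncated particle systems. It also yields the extra fact that the limit has no fixed-time jumps. In effect, you carry out with an explicit approximating family the scheme that the Ethier--Kurtz proof runs abstractly, where the approximating jump generators come from the positive maximum principle.

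One sentence should be corrected. Tightness of each coordinate process separately in $D([0,T],\{0,1\})$ does \emph{not} imply tightness in $D([0,T],\mathcal X)$. In the $J_1$ topology, different coordinates can jump at distinct times that merge in the limit. What you need, and what your rate bound delivers, is the Aldous estimate for the metric $d$ itself:
\begin{equation*}
\mathbb P\bigl(d(\eta(\tau+h),\eta(\tau))>2^{-J}\bigr)\le h\sum_{j\le J}\bigl(2\sharp N_\rho(X_j)+1\bigr),
\end{equation*}
uniformly in $n$ and in stopping times $\tau$. Your remark about finite families of coordinates points this way, but it should replace the coordinatewise reduction rather than supplement it.
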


\begin{proof}
The proof of this proposition is standard and follows from
\cite[Chapter 4, Theorem 5.4 and Remark 5.5]{EK86}. 
For this purpose, we confirm the validity of the assumptions in \cite[Chapter 4, Theorem 5.4]{EK86}.
Note that ${\mathcal X}$ endowed with the product topology
is a compact metrizable space.
Therefore, the Stone--Weierstrass theorem shows that ${\mathcal F}_{c}$ is dense in $C({\mathcal X})$.
Note also that $\mathcal L_{\alpha,\beta}f$ is a local function for any $f\in\mathcal F_c$.
In particular, we have $\mathcal L_{\alpha,\beta}\mathcal F_c \subset C(\mathcal X)$.
Moreover, $\mathcal L_{\alpha,\beta}$ satisfies the positive maximum principle. Indeed, suppose that
$f\in{\mathcal F}_{c}$ and that
$f(\eta_{0})=\sup_{\eta\in{\mathcal X}}f(\eta)\geq0$. 
Since $f(\zeta)-f(\eta_{0})\leq0$ for every configuration $\zeta\in\mathcal X$,
we have
$({\mathcal L}_{\alpha,\beta}f)(\eta_{0})\leq0$.
It is also clear that ${\mathcal L}_{\alpha,\beta}{\bf 1}=0$.
Therefore, the cited theorem yields, for every probability measure $\nu$ on
$\mathcal X$, a probability measure $\mathbb P_\nu$ on
$D([0,T],\mathcal X)$ with initial distribution $\nu$ satisfying
\eqref{mart prob} for every $f\in\mathcal F_c$.

It remains to show \eqref{mart prob qv} for any $f\in\mathcal F_c$.
Applying the martingale problem to $f$ and $f^{2}$ shows that
\[
 (M^{f}_{t})^{2}
 -\int_{0}^{t}\bigl(
 {\mathcal L}_{\alpha,\beta}(f^{2})
 -2f{\mathcal L}_{\alpha,\beta}f
 \bigr)(\eta(s))\,ds
\]
is a martingale. Furthermore, a direct computation gives
\[
 \bigl({\mathcal L}_{\alpha,\beta}(f^{2})
 -2f{\mathcal L}_{\alpha,\beta}f\bigr)(\eta)
 =
 \sum_{{\bm e}\in{\mathcal E}_{\eta}}
 {\bm p}_{\alpha,\beta}({\bm e})
 \bigl(f(t({\bm e}))-f(\eta)\bigr)^{2}\geq0.
\]
Therefore, the right-hand side in \eqref{mart prob qv} is continuous, predictable, adapted, and non-decreasing.
The uniqueness of the Doob--Meyer decomposition implies \eqref{mart prob qv}.
\end{proof}

\begin{re}\label{rem existence}
{\rm{(i)}} Throughout the rest of the present paper, for each initial distribution
$\nu$ we fix one probability measure ${\mathbb P}_{\nu}$ given by Proposition
\ref{prop existence}, and the exclusion process $\eta^{\alpha, \beta}$ on
${\mathbb G}^{\partial}({\mathbb X}, \rho)$ means the canonical coordinate process
under ${\mathbb P}_{\nu}$. We do not address the uniqueness of solutions to the above
martingale problem. This causes no ambiguity in our main results because the proof of
Theorem \ref{mainthm} uses only the martingale properties \eqref{mart prob} and
\eqref{mart prob qv} together with deterministic estimates on
${\mathcal L}_{\alpha, \beta}$, and the uniqueness of the limit is guaranteed at the
macroscopic level by Theorem \ref{thm:uniq}. In particular, the conclusion of
Theorem \ref{mainthm} is valid for {\it{every}} choice of the family
$\{ {\mathbb P}_{\nu^{k(\rho)}} \}_{\rho>0}$ of martingale solutions.
\vspace{2mm} \\
{\rm{(ii)}} For $\gamma>0$, define the time scaling
$\Theta_{\gamma}: D([0,\gamma T], {\mathcal X}) \to D([0,T], {\mathcal X})$ by
$(\Theta_{\gamma}\omega)(t):=\omega(\gamma t)$. Applying Proposition
\ref{prop existence} with the time horizon $\gamma T$ and performing a change of
variables in \eqref{mart prob} and \eqref{mart prob qv}, we see that under the image
measure ${\mathbb P}_{\nu}\circ \Theta_{\gamma}^{-1}$ the canonical process solves the
martingale problem associated with $\gamma {\mathcal L}_{\alpha, \beta}$. More precisely, for every
$f \in {\mathcal F}_{c}$,
$f(\eta(t))-f(\eta(0))-\gamma \int_{0}^{t}({\mathcal L}_{\alpha, \beta}f)(\eta(s))\, ds$
is a square-integrable martingale with the predictable quadratic variation
$\gamma \int_{0}^{t}({\mathcal L}_{\alpha, \beta}(f^{2})
-2f{\mathcal L}_{\alpha, \beta}f)(\eta(s))\, ds$. This applies to the time-changed
process $\eta^{\alpha, \beta, k}_{\gamma}$ introduced in Section \ref{main results},
and in particular to the diffusively rescaled process $\xi^{\rho}$.
\vspace{2mm} \\
{\rm{(iii)}} If, in addition, $\sup_{X\in {\mathbb X}}\sharp N_{\rho}(X)<\infty$
holds, then the jump rates satisfy the uniform summability condition required in
\cite[Theorem I.3.9]{Lig85}, and Liggett's theorem provides a Feller Markov process
whose generator is the closure of ${\mathcal L}_{\alpha, \beta}\vert_{{\mathcal F}_{c}}$,
with ${\mathcal F}_{c}$ being a core for the generator. In this case, the above
martingale problem is well-posed, and its unique solution is the path law of the
Feller process given by Liggett's theorem, cf.\ \cite[Theorem 4.4.1]{EK86}.
\end{re}
\subsection{Dynkin's martingale}
In this subsection, we compute Dynkin's martingales,
which play a fundamental role in the proof of Theorem \ref{mainthm}.
First of all, we fix a test function $\varphi\in C^{\infty}_{c}(M)$, and consider
a local function $f_{\rho}=f_{\rho}(\cdot; {\varphi})$
on ${\mathcal X}_{k(\rho)}$ 
defined by
\begin{align}
f_{\rho}(\eta)=
\langle \pi_{{\mathbb X}_{k(\rho)}} (\eta), \varphi \rangle
:=\frac{1}{\omega_{d}\rho^{d}}\sum_{X\in {\mathbb X}_{k(\rho)}} \eta(X) m_{1/2}(X) \varphi(x(X)), \quad \eta\in\mc X_{k(\rho)}.
\end{align}
Note that the right-hand side is a finite sum because $\varphi$ has compact support.

We then define a continuous-time real-valued stochastic 
process $M^{\rho}(\varphi)=\big(M^{\rho}(\varphi)_{t}\big)_{t\geq 0}$ by
\begin{align}\label{dynkin}
M^{\rho}(\varphi)_{t}&:=
f_{\rho}(\xi^{\rho}(t); \varphi)
-
f_{\rho}(\xi^{\rho}(0); \varphi)
-\frac{1}{\delta}
\int_{0}^{t} \big({\mathcal L}_{1/2, \delta}
f_{\rho}(\cdot; \varphi) \big)
(\xi^{\rho}(s))
ds, \quad t\geq 0.
\end{align}
Since $f_{\rho}(\cdot; \varphi)$ and $f_{\rho}(\cdot; \varphi)^{2}$ are bounded
local functions, applying Proposition \ref{prop existence} and
Remark \ref{rem existence}(ii) to the law of the time-changed process $\xi^{\rho}$,
we see that $M^{\rho}(\varphi)$ is a square-integrable martingale with the
quadratic variation $\big( \langle
M^{\rho}(\varphi) \rangle_{t} \big)_{t\geq 0}$ satisfying
$$
\langle M^{\rho}(\varphi) \rangle_{t}=
\int_0^t \Gamma^{\rho}({\xi}^{\rho}(s);\varphi) ds,
$$
where $\Gamma^{\rho}(\eta; \varphi)$ is the so-called {\it{carr\'e du champ}} defined by
\begin{align}\label{carreduchamp}
\Gamma^{\rho}(\eta; \varphi):=
\frac{1}{\delta} \Big \{ {\mathcal L}_{1/2, \delta } \big(f_{\rho}(\cdot; \varphi)^{2} \big)
(\eta)
-2\big({\mathcal L}_{1/2, \delta } f_{\rho}(\cdot; \varphi) \big)(\eta)\cdot
f_{\rho}(\eta; \varphi)
\Big \}.
\end{align}

In the next lemma, we compute Dynkin's martingale $M^\rho(\varphi)$
and express it as a function of the empirical measure $\pi_{{\mathbb X}_{k(\rho)}}$.

\begin{lm}\label{lm1}
For any $\varphi\in C_c^\infty(M)$ and $\eta \in \mathcal{X}_{k(\rho)}$, we have
\begin{equation}\label{generator}
\frac{1}{\delta} 
\left( \mathcal{L}_{1/2, \delta} f_\rho ( \cdot ; \varphi ) \right) (\eta)
= \langle \pi_{\mathbb{X}_{k(\rho)}} (\eta), \Delta_1 \varphi -V\varphi \rangle +o(1) 
\quad \mbox{as }\rho \searrow 0.
\end{equation}
Consequently, for any $0\le t \le T$, we have
\begin{align*}
M^\rho
(\varphi)_{t}&=
\big \langle
\pi_{{\mathbb X}_{k(\rho)}}
\big( \xi^\rho(t)\big), \varphi
\big \rangle
-
\big \langle
\pi_{{\mathbb X}_{k(\rho)}}
\big( \xi^\rho(0)\big), \varphi
\big \rangle
\nonumber \\
&\qquad
-\int_{0}^{t} 
\langle
\pi_{{\mathbb X}_{k(\rho)}}
\big(\xi^\rho(s)\big),
\Delta_1 \varphi - V\varphi
\big \rangle ds +o(1).
\end{align*}
\end{lm}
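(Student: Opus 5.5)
We compute $\mathcal{L}_{1/2,\delta} f_\rho$ directly. Since $f_\rho$ is linear in $\eta$, we have $f_\rho(\eta^{XY})-f_\rho(\eta)=\frac{1}{\omega_d\rho^d}(m_{1/2}(Y)\varphi(x(Y))-m_{1/2}(X)\varphi(x(X)))$ when $\eta(X)=1,\eta(Y)=0$. The killing move gives $f_\rho(\eta^{X,X(\partial)})-f_\rho(\eta)=-\frac{1}{\omega_d\rho^d}m_{1/2}(X)\varphi(x(X))$. Writing $q(X)=\max\{1-\delta V(x(X)),0\}$ and $p_{1/2,\delta}(X,Y)=q(X)p_{1/2,0}(X,Y)$, we get
\begin{align*}
\omega_d\rho^d\,\mathcal{L}_{1/2,\delta}f_\rho(\eta)
&=\sum_{X}\sum_{Y\sim_\rho X} q(X)p_{1/2,0}(X,Y)\eta(X)(1-\eta(Y))\bigl(m_{1/2}(Y)\varphi(x(Y))-m_{1/2}(X)\varphi(x(X))\bigr)\\
&\quad-\sum_X \min\{\delta V(x(X)),1\}\eta(X)m_{1/2}(X)\varphi(x(X)).
\end{align*}
For small $\rho$, on the compact set $U_2(\supp\varphi)$ we have $\delta V\le1$. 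The killing term divided by $\delta$ is then exactly $-\langle\pi_{\mathbb X}(\eta),V\varphi\rangle$.

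The main step is the exclusion cancellation. We split $\eta(X)(1-\eta(Y))=\eta(X)-\eta(X)\eta(Y)$. The quadratic part is
\[
\sum_{X,Y}q(X)p_{1/2,0}(X,Y)\eta(X)\eta(Y)\bigl(m_{1/2}(Y)\varphi_Y-m_{1/2}(X)\varphi_X\bigr).
\]
By \eqref{symmetry}, $p_{1/2,0}(X,Y)m_{1/2}(X)=\mu_{1/2}(X)\mu_{1/2}(Y)$ is symmetric in $(X,Y)$. We rewrite $p_{1/2,0}(X,Y)m_{1/2}(Y)=\mu_{1/2}(X)\mu_{1/2}(Y)\,\frac{m_{1/2}(Y)}{m_{1/2}(X)}$. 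If $q(X)\equiv1$ and $m_{1/2}(Y)/m_{1/2}(X)\equiv1$, the summand would be antisymmetric and the sum would vanish. Two errors remain.

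\emph{First error:} the ratio deviation $m_{1/2}(Y)/m_{1/2}(X)-1$, which is $o(\rho^2)=o(\delta)$ uniformly on pairs inside $B(o,r)\supset U_2(\supp\varphi)$ by Lemma \ref{hobo 1/2}.

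\emph{Second error:} the factor $q(X)-1=O(\delta)$, multiplied by a difference that is itself small. Only pairs with $X$ or $Y$ meeting $\supp\varphi$ contribute.

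We bound both errors by $\frac{1}{\delta\omega_d\rho^d}\sum_X\sum_{Y}\mu_{1/2}(X)\mu_{1/2}(Y)\cdot o(\delta)\,\|\varphi\|_\infty$. Since $\sum_Y\mu_{1/2}(Y)=\mu_{1/2}(\mathcal N_\rho(X))=O(\rho^d)$ by \eqref{nbd}, this becomes $o(1)\sum_{X\subset U_1(\supp\varphi)}\mu_{1/2}(X)=o(1)$. For the $q-1$ part, note that the difference $m_{1/2}(Y)\varphi_Y-m_{1/2}(X)\varphi_X$ is $O(\rho)\,m_{1/2}(X)$ by smoothness of $\varphi$ and Lemma \ref{hobo 1/2}, so this part is $O(\rho)=o(1)$.

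For the linear part, $\sum_{Y}p_{1/2,0}(X,Y)\bigl(m_{1/2}(Y)\varphi_Y-m_{1/2}(X)\varphi_X\bigr)$, we use Lemma \ref{hobo 1/2} again to replace $m_{1/2}(Y)$ by $m_{1/2}(X)(1+o(\rho^2))$. This gives $m_{1/2}(X)\bigl(\sum_Y p_{1/2,0}(X,Y)\varphi_Y-\varphi_X\bigr)+m_{1/2}(X)o(\delta)$. Dividing by $\delta$ and applying Lemma \ref{one particle}, the main term equals $m_{1/2}(X)\bigl(\Delta_1\varphi(x(X))+O(|\mathbb X|/\rho^2+\rho)\bigr)$. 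Since $|\mathbb X_{k(\rho)}|=o(\rho^3)$, the error is $o(1)$. The factor $q(X)$ contributes an extra $O(\delta)\cdot O(1)$ after the same division, which is also $o(1)$.

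Summing over $X$ against $\eta(X)\le1$, all errors are controlled by $\frac{1}{\omega_d\rho^d}\sum_{X\subset U_1(\supp\varphi)}m_{1/2}(X)$. By \eqref{nbd} this is bounded by $C\mu_1(U_2(\supp\varphi))$, uniformly in $\eta$. This proves \eqref{generator}, with an $o(1)$ that is uniform in $\eta$. The martingale identity follows by substituting \eqref{generator} into \eqref{dynkin} and integrating over $[0,t]$; uniformity gives an error of $t\cdot o(1)$.

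I expect the main obstacle to be the bookkeeping of the quadratic cancellation. Every nonsymmetric remainder must be shown to be $o(\delta)$ relative to $\mu_{1/2}(X)\mu_{1/2}(Y)$; this is exactly where condition \textbf{(C)}, through Lemma \ref{hobo 1/2}, is used.
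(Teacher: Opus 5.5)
Your proof is correct and follows essentially the paper's argument. The killing jumps give exactly $-\langle \pi_{\mathbb X}(\eta), V\varphi\rangle$, the quadratic exclusion term cancels by detailed-balance antisymmetry, the $m_{1/2}$-ratio deviations are $o(\delta)$ by Lemma \ref{hobo 1/2}, Lemma \ref{one particle} produces $\Delta_1\varphi$, and the $-\delta V$ correction to the exchange rates is $O(\rho)$; the only difference is bookkeeping (the paper separates $1-\delta V$ into (I) and (II) first and splits $\varphi(x(Z))m_{1/2}(Z)-\varphi(x(Y))m_{1/2}(Y)$ so that the quadratic piece (Ic) vanishes exactly, whereas you split $\eta(X)(1-\eta(Y))$ first), and your explicit uniformity in $\eta$ is what the martingale identity needs.
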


\begin{proof} 
Fix $\varphi\in C_c^\infty(M)$ and $\rho >0$ so that 
\begin{equation*}
\delta=\frac{\rho^2}{2(d+2)} \leq \min \left\{ \| V \|^{-1}_{U_{2}(\supp \varphi)}, 1 \right\},
\end{equation*}
and (\ref{inj}) holds. 
By the definition (\ref{transition2}) 
of $\mathcal{L}_{1/2, \delta}$, we obtain for any $\eta \in \mathcal{X}_{k(\rho)}$
\begin{align}
&
\frac{1}{\delta \omega_d \rho^d }\sum_{Y\in {\mathbb X}_{k(\rho)}^{\partial}}
 \sum_{Z
\in ({\mathbb E}_{k(\rho)}^{\partial})_{Y}}
p_{1/2, \delta} (Y,Z)\eta(Y)\big(1-\eta(Z) \big) 
\sum_{X\in {\mathbb X}^{\partial}}\big(\eta^{Y,Z}(X)-\eta(X) \big)\varphi(x(X)) m_{1/2}(X)
\nonumber \\
&=
\frac{1}{\delta \omega_{d}\rho^d }\sum_{Y\in {\mathbb X}_{k(\rho)}^{\partial}}
\eta(Y) \bigg\{
\sum_{Z \in ({\mathbb E}_{k(\rho)})_{Y}}
p_{1/2, \delta} (Y,Z)\big(1-\eta(Z) \big) 
\big(\varphi(x(Z))m_{1/2} (Z)-\varphi(x(Y))m_{1/2} (Y) \big) 
\nonumber \\
& 
\hspace{30mm}
+p_{1/2, \delta} (Y, X(\partial)) \left( \varphi(x(\partial)) m_{1/2}(X(\partial))
- \varphi (x(Y)) m_{1/2}(Y) \right) \bigg\} 
\nonumber \\
&=
\frac{1}{\delta \omega_d \rho^d }
\sum_{Y\in {\mathbb X}_{k(\rho)}^{\partial}}
\eta(Y) \bigg\{ 
\left( 1 -\delta V(x(Y)) \right) 
\sum_{Z \in ({\mathbb E}_{k(\rho)})_{Y}} \frac{\mu_{1/2}(Z)}{\mu_{1/2}(\mathcal{N}_\rho(Y) )}
\big(1-\eta(Z) \big)
\nonumber \\
&\mbox{ }
\hspace{30mm}
\times
\big(\varphi(x(Z))m_{1/2} (Z)-\varphi(x(Y))m_{1/2} (Y) \big) 
-\delta V(x(Y)) \varphi (x(Y)) m_{1/2}(Y)  \bigg\} 
\nonumber \\
&=
\frac{1}{\delta \omega_d \rho^d }
\sum_{Y \in \mathbb{X}_{k(\rho)}} \eta(Y) 
\sum_{Z \in \mathbb{E}_{k(\rho) Y}} 
\frac{\mu_{1/2}(Z)}{\mu_{1/2}(\mathcal{N}_\rho (Y) )} (1 -\eta(Z))
\left( \varphi (x(Z)) m_{1/2}(Z) - \varphi(x(Y))m_{1/2}(Y) \right) \notag \\
&\quad - 
\frac{1}{\omega_d \rho^d }
\sum_{Y \in \mathbb{X}_{k(\rho)}} \eta(Y) V(x(Y))
\sum_{Z \in \mathbb{E}_{k(\rho) Y}} 
\frac{\mu_{1/2}(Z)}{\mu_{1/2}(\mathcal{N}_\rho (Y) )} (1 -\eta(Z)) \notag \\
&\mbox{ } \hspace{25mm} \times
\left( \varphi (x(Z)) m_{1/2}(Z) - \varphi (x(Y))m_{1/2}(Y) \right) \notag \\
&\quad - \frac{1}{\omega_d \rho^d} \sum_{Y \in \mathbb{X}_{k(\rho)}} 
\eta( Y) V(x(Y)) \varphi (x(Y)) m_{1/2}(Y) 
\notag \\
&=:\mathrm{(I)}-\mathrm{(II)}-\mathrm{(III)}.
\end{align}
Noting that
\begin{equation*}
\mathrm{(III)}=\langle \pi_{{\mathbb X}_{k(\rho)}}(\eta), V\varphi \rangle,
\end{equation*}
to complete the proof, we shall show that
\begin{align*}
\mathrm{(I)} - \mathrm{(II)} = \langle \pi_{{\mathbb X}_{k(\rho)}}(\eta), \Delta_1 \varphi \rangle+o(1).
\end{align*}

We start by decomposing (I) into three terms as
\begin{align}
&
\frac{1}{\delta \omega_d \rho^d }
\sum_{Y\in {\mathbb X}_{k(\rho)}}
\eta(Y)
\sum_{Z \in ({\mathbb E}_{k(\rho)})_{Y}}
p_{1/2, 0} (Y,Z)\big(1-\eta(Z) \big)\varphi(x(Z))\big(m_{1/2} (Z)-m_{1/2} (Y)\big)
\nonumber \\
&\quad +
\frac{1}{\delta\omega_d \rho^d }
\sum_{Y\in {\mathbb X}_{k(\rho)}}
\eta(Y)m_{1/2} (Y)
\sum_{Z \in ({\mathbb E}_{k(\rho)})_{Y}}
p_{1/2, 0} (Y,Z)\big(\varphi(x(Z))-\varphi(x(Y))\big)
\nonumber \\
&\quad -
\frac{1}{\delta \omega_d \rho^d }
\sum_{Y\in {\mathbb X}_{k(\rho)}}
\eta(Y)m_{1/2} (Y)
\sum_{Z \in ({\mathbb E}_{k(\rho)})_{Y}}
p_{1/2, 0 } (Y,Z)\eta(Z)\big(\varphi(x(Z))-\varphi(x(Y))\big)
\nonumber \\
&=:{\rm{(Ia)}}+{\rm{(Ib)}}-{\rm{(Ic)}}.
\end{align}
We estimate (Ia), (Ib) and (Ic) separately.

First, let us estimate (Ia).
Taking $\rho >0$ small enough so that 
\begin{equation} 
\frac{ \mu_{1/2}(\mathcal{N}_\rho (X))}{ \omega_d \rho^d} \leq 2w^{1/2}(x(X)) \quad X \in 
\mathbb{X}_{k(\rho)} \mbox{ with }
X \subset  U_{2\rho}\left( \mathrm{supp} \varphi \right)
\label{rho nbd}
\end{equation}
(see (\ref{nbd}) for details), we obtain
\begin{align}
\vert {\rm{(Ia)}} \vert &\leq \frac{1}{\delta \omega_d \rho^{d}}\sum_{Y\in {\mathbb X}_{k(\rho)}}
\sum_{Z \in (\mathbb{E}_{k(\rho)})_{Y}} \vert \varphi(x(Z)) \vert 
p_{1/2, 0}(Y,Z) m_{1/2} (Y) \Big \vert \frac{m_{1/2}(Z)}{m_{1/2} (Y)}-1 \Big \vert
\nonumber \\
&\leq \frac{1}{\delta \omega_d \rho^{d}}\sum_{Y\in {\mathbb X}} 
\max_{ Z \in (\mathbb{E}_{k(\rho)} )_Y }  
\left\vert \varphi(x(Z)) 
\Big( \frac{m_{1/2}(Z)}{m_{1/2} (Y)}-1 \Big) \right\vert 
\Big( \sum_{Z\in (\mathbb{E}_{k(\rho)})_Y}p_{1/2, 0}(Y,Z) \Big) m_{1/2}(Y)
\nonumber \\
&\leq \frac{2}{\delta}\Vert  \varphi \Vert_{\infty} \cdot
\mu_{1}\big( U_{2}\left( \mathrm{supp} \varphi \right) \big) 
\nonumber \\
&\quad \times
\sup
\left\{ 
\Big \vert \frac{m_{1/2}(Z)}{m_{1/2}(Y)}-1 \Big \vert  
\, ; \, Y,Z \in {\mathbb X}, Y\subset U_{2}\left( \supp \varphi \right), Z\in (\mathbb{E}_{k(\rho)} )_Y
\right\}.
\end{align}
By taking $r>0$ satisfying $U_{2} (\supp \varphi)\subset B(o, r)$,
condition {\bf (C)}
concludes that $| (\mathrm{Ia})| =o(1)$ $(\rho \searrow 0)$ via Lemma \ref{hobo 1/2}.

Next, let us compute (Ib).  
Combining $\vert {\mathbb X}_{k(\rho)} \vert=o(\rho^3)$ with
the key estimate (see Lemma \ref{one particle}, and also \cite[Theorem 3.2]{IK24} for the Riemannian case):
\begin{align*}
&
\Big \vert \frac{1}{\delta}
\sum_{Z \in ({\mathbb E}_{k(\rho)})_{Y}}
p_{1/2, 0} (Y,Z)\big(\varphi(x(Z))-\varphi(x(Y))\big)-
\left( \Delta_1 \varphi \right) (x(Y))
\Big \vert \\
&\quad 
\leq K_{1} \frac{\vert {\mathbb X}_{k(\rho)} \vert}{\rho^2}+K_{2} \rho=o(1), 
\quad Y\in {\mathbb X}_{k(\rho)},
\end{align*}
we obtain
\begin{align}
{\rm{(Ib)}}&=
\frac{1}{ \omega_d \rho^d }
\sum_{Y\in {\mathbb X}_{k(\rho)}, Y\subset U_{2\rho}\left( \mathrm{supp} \varphi \right) }
\eta(Y)m_{1/2} (Y) 
\big(\left( \Delta_1 \varphi \right) (x(Y)) +o(1) \big)
\nonumber \\
&=\langle 
 \pi_{{\mathbb X}_{k(\rho)}}(\eta), \Delta_1 \varphi \rangle
+o(1).
\end{align}

Finally, let us show $\mathrm{(Ic)}=0$. Indeed,
by the definition of the proximity graph $\mathbb{G}(\mathbb{X}_{k(\rho)}, \rho)
=(\mathbb{X}_{k(\rho)}, \mathbb{E}_{k(\rho)})$, 
$e=(Y, Z) \in \mathbb{E}_{k(\rho)}$, $Y,  Z \in \mathbb{X}_{k(\rho)}$ 
implies that $\overline{e}=(Z, Y) \in \mathbb{E}_{k(\rho)}$. 
Moreover, recalling (\ref{symmetry}), we have
\begin{align}
{\rm{(Ic)}}&=
\frac{1}{\delta \omega_d \rho^d }
\sum_{Y\in {\mathbb X}_{k(\rho)}}
\sum_{Z \in ({\mathbb E}_{k(\rho)})_{Y}}
\eta(Y)\eta(Z) \mu_{1/2}(Y)\mu_{1/2} (Z)
\big(\varphi (x(Z))-\varphi(x(Y)) \big)
\nonumber \\
&=\frac{1}{\delta \omega_d \rho^d }
\sum_{e\in {\mathbb E}_{k(\rho)}} \eta(t(e))\eta(o(e))
\mu_{1/2}(o(e))\mu_{1/2} (t(e))
\big( \varphi (x(t(e)))-\varphi(x(o(e))) \big)
\nonumber \\
&=
\frac{1}{\delta \omega_d \rho^d }
\sum_{e\in {\mathbb E}_{k(\rho)}} \eta(t({\overline e}))\eta(o({\overline e}))
\mu_{1/2} (o({\overline e})) \mu_{1/2} (t({\overline e}))
\big( \varphi (x(t(\overline{e})))-\varphi(x(o(\overline{e}))) \big)
\nonumber \\
&=\frac{1}{\delta \omega_d \rho^d }
\sum_{e\in {\mathbb E}_{k(\rho)}} \eta(o(e))\eta(t(e))
\mu_{1/2} (t(e))\mu_{1/2} (o(e))
\big( \varphi (x(o(e)))-\varphi(x(t(e))) \big)
\nonumber \\
&
=-{\rm{(Ic)}},
\end{align}
which implies ${\rm{(Ic)}}=0$.

To conclude the proof, it remains to show that $|\mathrm{(II)}|=o(1)$.
Taking $\rho>0$ satisfying (\ref{rho nbd}), by condition {\bf{(C)}} 
together with Lemma \ref{hobo 1/2}, we obtain
\begin{align*}
|\mathrm{(II)}|  &\leq 
\frac{1}{\omega_d \rho^d }
\sum_{Y \in \mathbb{X}_{k(\rho)}}  V(x(Y)) m_{1/2}(Y) \\
&\quad \times 
\sum_{Z \in \mathbb{E}_{k(\rho) Y}} 
\frac{\mu_{1/2}(Z)}{\mu_{1/2}(\mathcal{N}_\rho (Y) )} 
\left(
\left| \varphi (x(Z)) - \varphi (x(Y)) \right| 
+
 |\varphi(x(Z))| \left| \frac{m_{1/2}(Z)}{m_{1/2}(Y)} -1 \right| 
\, \right) \\
& \leq 
2  \| V \|_{\infty, U_{2\rho}\left( \mathrm{supp} \varphi \right)} \cdot \mu_1(U_{2\rho}\left( \mathrm{supp} \varphi \right)) 
\bigg (
\| \nabla \varphi \|_\infty \rho
\nonumber \\
& \quad 
+
\| \varphi \|_{\infty} 
\sup \bigg \{ 
\left| \frac{m_{1/2}(Z)}{m_{1/2}(Y)} -1 \right| ; Y\in \mathbb{X}_{k(\rho)}, 
Y \subset U_{2\rho}\left( \mathrm{supp} \varphi \right), Z \in 
(\mathbb{E}_{k(\rho)} )_Y \bigg \} 
\bigg)
 \\
&\leq  C \rho.
\end{align*}
Combining the above estimates, we conclude the lemma.
\end{proof}

Recall the definition \eqref{carreduchamp} of the carr\'e du champ $\Gamma^{\rho}(\eta ;\varphi)$.
In the next lemma, we compute the carr\'e du champ $\Gamma^{\rho}(\eta ;\varphi)$ 
and show that the time integral of the carr\'e du champ vanishes
as $\rho\searrow0$.

\begin{lm}\label{lm2}
For any $\varphi\in C_c^\infty(M)$, any
 $\rho >0$ such that $\delta V(x(Y)) <1$ for all $Y \in \mathbb{X}_{k(\rho)}$
with $Y \subset U_{2\rho}\left( \mathrm{supp} \varphi \right) $ and 
any $\eta \in  \mathcal{X}_{k(\rho)}$, 
 we have
\begin{align}
\Gamma^{\rho}(\eta ; \varphi)=&: \Gamma_{\mathrm{ex}}^\rho (\eta ; \varphi)
+\Gamma_{\mathrm{kill}}^\rho (\eta ; \varphi)\\
 =&\frac{1}{\delta} \sum_{Y\in {\mathbb X}_{k(\rho)}}
\sum_{Z\sim_\rho Y} \eta(Y)
\big( 1- \eta(Z) \big)
\big( 1- \delta V(x(Y)) \big)
\frac{ \mu_{1/2}(Z)}{ \mu_{1/2}(\mathcal{N}_{\rho}(Y))}
\nonumber \\
& \qquad \times \Big \{ \frac{1}{\omega_{d}\rho^d} \big(
\varphi (x (Z)) m_{1/2} (Z)-\varphi (x (Y))m_{1/2} (Y)\big)
\Big \}^2
\nonumber \\
&  {}+ \sum_{Y\in {\mathbb X}_{k(\rho)}} V(x(Y))\, \eta(Y)
\Big\{ \frac{1}{\omega_{d}\rho^{d}}\, \varphi(x(Y))\, m_{1/2}(Y) \Big\}^{2}. 
\label{Carre}
\end{align}
Consequently,
\begin{align}\label{Carre2}
\lim_{\rho \searrow0} \mathbb{E} \left[ \int_0^T \Gamma^{\rho}(\xi^\rho (s) ; \varphi) ds \right] =0.
\end{align}
\end{lm}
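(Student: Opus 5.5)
We first derive the explicit formula \eqref{Carre} from the identity in the proof of Proposition \ref{prop existence},
\[
\bigl({\mathcal L}_{1/2,\delta}(f^{2})-2f{\mathcal L}_{1/2,\delta}f\bigr)(\eta)=\sum_{{\bm e}\in{\mathcal E}_{\eta}}{\bm p}_{1/2,\delta}({\bm e})\bigl(f(t({\bm e}))-f(\eta)\bigr)^{2},
\]
applied to $f=f_\rho(\cdot;\varphi)$. The admissible transitions from $\eta$ split into two types.

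\emph{Exchange moves} $\eta\mapsto\eta^{Y,Z}$ with $Z\sim_\rho Y$, $\eta(Y)=1$, $\eta(Z)=0$. For these,
\[
f_\rho(\eta^{Y,Z})-f_\rho(\eta)=(\omega_d\rho^d)^{-1}\bigl(\varphi(x(Z))m_{1/2}(Z)-\varphi(x(Y))m_{1/2}(Y)\bigr),
\]
and the rate is $(1-\delta V(x(Y)))\mu_{1/2}(Z)/\mu_{1/2}(\mathcal N_\rho(Y))$. Here we use the hypothesis $\delta V<1$ near $\supp\varphi$ to drop the $\max$. Pairs with both pieces outside the support contribute zero.

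\emph{Killing moves} $\eta\mapsto\eta^{Y,X(\partial)}$ with $\eta(Y)=1$. For these, the increment is $-(\omega_d\rho^d)^{-1}\varphi(x(Y))m_{1/2}(Y)$ and the rate is $\delta V(x(Y))$.

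Multiplying by $1/\delta$ gives exactly $\Gamma^\rho_{\rm ex}+\Gamma^\rho_{\rm kill}$ as displayed.

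Next we bound $\Gamma^\rho$ uniformly in $\eta$ by a quantity that is $o(1)$; the expectation statement \eqref{Carre2} then follows trivially by integrating over $[0,T]$. We use $\eta\le1$ throughout, and we use \eqref{nbd} and \eqref{1/2 to 1} to write $m_{1/2}(Y)\asymp\omega_d\rho^d\mu_1(Y)$ for $Y\subset U_{2\rho}(\supp\varphi)$, as in \eqref{rho nbd}.

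\emph{Killing term.} We have
\[
\Gamma^\rho_{\rm kill}\le\|V\|_{\infty,U_2(\supp\varphi)}\|\varphi\|_\infty^2\sum_Y(\omega_d\rho^d)^{-2}m_{1/2}(Y)^2\le C\sup_{Y}\mu_1(Y)\cdot\mu_1(U_2(\supp\varphi)).
\]
Since $\mu_1(Y)\le C|\mathbb X_{k(\rho)}|^d\to0$ by the volume bound on a compact set and $\operatorname{diam}Y\le|\mathbb X|$, this term vanishes.

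\emph{Exchange term.} We write
\[
\varphi(x(Z))m_{1/2}(Z)-\varphi(x(Y))m_{1/2}(Y)=m_{1/2}(Y)\Bigl[\varphi(x(Z))-\varphi(x(Y))+\varphi(x(Z))\Bigl(\tfrac{m_{1/2}(Z)}{m_{1/2}(Y)}-1\Bigr)\Bigr].
\]
The bracket is $O(\rho)$: the first difference is at most $\|\nabla\varphi\|_\infty\cdot 2\rho$, and the second is $o(\rho^2)$ by Lemma \ref{hobo 1/2}. Hence the squared curly bracket is $\le C(\omega_d\rho^d)^{-2}m_{1/2}(Y)^2\rho^2\le C\mu_1(Y)^2\rho^2$. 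Summing over $Z$ with the transition weights, which sum to at most $1$, and dividing by $\delta\asymp\rho^2$, we get
\[
\Gamma^\rho_{\rm ex}\le C\sum_{Y\subset U_{2}(\supp\varphi)}\mu_1(Y)^2\le C\sup_Y\mu_1(Y)\,\mu_1(U_2(\supp\varphi))\to0.
\]

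The main obstacle is the exchange term. The factor $1/\delta\sim\rho^{-2}$ must be compensated by the squared increment of order $\rho^2\mu_1(Y)^2$, which leaves only one extra factor $\mu_1(Y)$ to beat. That factor is supplied by the smallness of the cells, since $|\mathbb X_{k(\rho)}|=o(\rho^3)$. A further point to check is that the neighbors $Z$ of cells meeting the support stay in a fixed compact set, namely $U_2(\supp\varphi)$ for $\rho<1$, so that all the local constants $O_{B(o,r)}$ from Lemmas \ref{Lem nbd} and \ref{hobo 1/2} are uniform.
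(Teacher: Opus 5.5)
Your proposal is correct and follows essentially the same route as the paper. You read off \eqref{Carre} from the identity $\mathcal L_{1/2,\delta}(f^2)-2f\mathcal L_{1/2,\delta}f=\sum_{\bm e\in\mathcal E_\eta}\bm p_{1/2,\delta}(\bm e)\bigl(f(t(\bm e))-f(\eta)\bigr)^2$ by splitting into exchange and killing jumps. You then bound $\Gamma^\rho$ uniformly in $\eta$ using $m_{1/2}(Y)/(\omega_d\rho^d)\lesssim\mu_1(Y)$, the gradient bound for $\varphi$, Lemma~\ref{hobo 1/2} for the ratio term, and the smallness of the cells. The only cosmetic difference is in the rate: the paper converts $\mu_1(Y)\lesssim|\mathbb X_{k(\rho)}|^d$ into an explicit $o(\rho^{3d})$ via $|\mathbb X_{k(\rho)}|=o(\rho^3)$, whereas your bound $C\sup_Y\mu_1(Y)$ needs only $|\mathbb X_{k(\rho)}|\to0$.
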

\begin{proof}
We first note that, by a direct computation from \eqref{carreduchamp},
the carr\'e du champ $\Gamma^{\rho}(\eta;\varphi)$ is expressed as
\begin{equation*}
\Gamma^{\rho}(\eta;\varphi)
=\frac{1}{\delta}\sum_{e\in \mathbb{E}^{\partial}_{k(\rho)}}
p_{1/2,\delta}(e)\, \eta(o(e))
\big( f_{\rho}(\eta^{e};\varphi)-f_{\rho}(\eta;\varphi) \big)^{2},
\end{equation*}
where the summation runs not only over the exchange jumps
$\eta \to \eta^{Y,Z}$, $(Y,Z)\in \mathbb{E}_{k(\rho)}$, but also over the killing jumps
$\eta \to \eta^{Y,X(\partial)}$, $Y \in \mathbb{X}_{k(\rho)}$.

By the choice of $\rho>0$, for any $Y, Z \in \mathbb{X}_{k(\rho)}$
with $Y\subset  U_{2\rho}\left( \mathrm{supp} \varphi \right) $ and $Y\sim_{\rho}Z$, we see
\begin{equation*}
p_{1/2, \delta}(Y, Z)= (1-\delta V(x(Y)) )
\frac{\mu_{1/2} (Z)}{{\mu}_{1/2} ({\mathcal N}_{\rho}(Y))} ,
\end{equation*}
and hence the exchange jumps give rise to the first term on the right-hand side of
\eqref{Carre}. On the other hand, the killing jump $\eta \to \eta^{Y,X(\partial)}$
occurs with rate $p_{1/2,\delta}(Y,X(\partial))=\min\{\delta V(x(Y)),1\}$
and changes $f_{\rho}$ by
\begin{equation*}
f_{\rho}(\eta^{Y,X(\partial)};\varphi)-f_{\rho}(\eta;\varphi)
=-\frac{1}{\omega_{d}\rho^{d}}\, \varphi(x(Y))\, m_{1/2}(Y).
\end{equation*}
This increment vanishes unless $x(Y)\in \mathrm{supp}\, \varphi$, in which case
$Y \subset U_{2\rho}\left( \mathrm{supp} \varphi \right)$ and thus
$\min\{\delta V(x(Y)),1\}=\delta V(x(Y))$ by the choice of $\rho$.
Therefore, the killing jumps give rise to the second term on the right-hand side of
\eqref{Carre}. Here the prefactor $\delta^{-1}$ is cancelled by the killing rate
$\delta V(x(Y))$. This proves \eqref{Carre}.

Next, let us prove \eqref{Carre2}.
We begin with the exchange part $\Gamma^{\rho}_{\mathrm{ex}}$.
Note that the summands of $\Gamma^{\rho}_{\mathrm{ex}}$ vanish unless
$Y \subset U_{2\rho}\left( \mathrm{supp} \varphi \right)$.
Since $\xi^{\rho}(s,Y)\in \{0,1\}$ and $0 \leq 1-\delta V(x(Y) )\leq 1$, we obtain
\begin{align*}
|\Gamma^{\rho}_{\mathrm{ex}}(\xi^\rho (s) ; \varphi)|
& \leq
\frac{1}{\delta} \sum_{\substack{Y\in {\mathbb X}_{k(\rho)} , \\
Y\subset U_{2\rho}\left( \mathrm{supp} \varphi \right)} }
\sum_{Z\sim_\rho Y}
\frac{ \mu_{1/2}(Z)}{ \mu_{1/2}(\mathcal{N}_{\rho}(Y))}
\left( \frac{m_{1/2} (Y)}{\omega_d \rho^d} \right)^2
\nonumber \\
& \quad \times 2\Big \{ \varphi (x (Z))^2
 \left( \frac{ m_{1/2} (Z)}{m_{1/2}(Y)}-1 \right)^2 +|\varphi(x(Z))-\varphi (x (Y))|^2
 \Big \}.
\end{align*}
Here we note that
\begin{equation*}
|\varphi(x(Z))-\varphi (x (Y))|\leq 2\| \nabla \varphi \|_{L^\infty (U_{2\rho}\left( \mathrm{supp} \varphi \right) )} \rho
\end{equation*}
for $Y \subset U_{2\rho}\left( \mathrm{supp} \varphi \right)$ and $Z \sim_{\rho} Y$.
By Lemma \ref{nbd}, for sufficiently small $\rho>0$ and for any $Y \subset U_{2\rho}\left( \mathrm{supp} \varphi \right) $,
\begin{equation*}
\frac{m_{1/2} (Y)}{\omega_d \rho^d} \leq 2 \mu_1 (Y) \leq 2 \mu_1(B(x(Y), |\mathbb{X}_{k(\rho)}| ))
\leq 4 w(x(Y))\omega_d |\mathbb{X}_{k(\rho)} |^d.
\end{equation*}
Moreover, under condition {\bf (C)}, Lemma \ref{hobo 1/2} implies that
\begin{align*}
&|\Gamma^{\rho}_{\mathrm{ex}}(\xi^\rho (s) ; \varphi)|\\
&\quad \leq
o_{B(o,r+2)}(\rho^{3d-2}) 
\\
&\quad \quad
\times
\sum_{\substack{Y\in {\mathbb X}_{k(\rho)} , \\
Y\subset U_{2\rho}\left( \mathrm{supp} \varphi \right)} }
\max_{Z \sim_{\rho} Y} \Big\{ \varphi (x (Z))^2
 \left( \frac{ m_{1/2} (Z)}{m_{1/2}(Y)}-1 \right)^2 +
 \| \nabla \varphi \|_{L^\infty (U_{2\rho}\left( \supp \varphi \right) )}^2 \rho^2 \Big\}
 \mu_1(Y)\\
&\quad \leq o_{B(o,r+2)}(\rho^{3d}),
\end{align*}
where 
$r>0$ is a radius so that $U_2(\supp \varphi) \subset B(o,r)$ and
we also used
\begin{align*}
\sum_{Y \subset U_{2\rho}\left( \mathrm{supp} \varphi \right)} \mu_{1}(Y)
\leq \mu_{1}\big(U_{2\rho}(\mathrm{supp}\, \varphi)\big)<\infty.
\end{align*}

It remains to estimate the killing part $\Gamma^{\rho}_{\mathrm{kill}}$.
Since the summands of $\Gamma^{\rho}_{\mathrm{kill}}$ vanish unless
$x(Y)\in \mathrm{supp}\, \varphi$, using the estimate of
$m_{1/2}(Y)/(\omega_{d}\rho^{d})$ displayed above and
$$\sum_{Y:\, x(Y)\in \mathrm{supp} \varphi}\, m_{1/2}(Y)/(\omega_{d}\rho^{d})
\leq 2\mu_{1}\big(U_{2\rho}\left( \mathrm{supp} \varphi \right)\big),$$
as in the proof of Lemma \ref{lm1},
we obtain
\begin{align*}
0 &\leq \Gamma^{\rho}_{\mathrm{kill}}(\xi^{\rho}(s);\varphi)
\\
&\leq \Vert V \Vert_{\infty, U_{2\rho}\left( \mathrm{supp} \varphi \right)} \Vert \varphi \Vert_{\infty}^{2}
\Big( \max_{Y:\, x(Y)\in \mathrm{supp} \varphi} \frac{m_{1/2}(Y)}{\omega_{d}\rho^{d}} \Big)
\sum_{Y:\, x(Y)\in \mathrm{supp} \varphi} \frac{m_{1/2}(Y)}{\omega_{d}\rho^{d}}
\\
&\leq 8 \Vert V \Vert_{\infty, U_{2\rho}\left( \mathrm{supp} \varphi \right)} \Vert \varphi \Vert_{\infty}^{2}
\Vert w \Vert_{\infty, U_{2\rho}\left( \mathrm{supp} \varphi \right)}\, \omega_{d}\,
\vert \mathbb{X}_{k(\rho)} \vert^{d}\,
\mu_{1}\big(U_{2\rho}\left( \mathrm{supp} \varphi \right)\big)\\
&= o_{B(o,r+2)}(\rho^{3d}),
\end{align*}
where $r>0$ is as above. 
Combining the estimates of $\Gamma^{\rho}_{\mathrm{ex}}$ and
$\Gamma^{\rho}_{\mathrm{kill}}$, we conclude \eqref{Carre2}.
\end{proof}

\subsection{Tightness and characterization of limit points}

Recall the probability measure ${\mathbb Q}^\rho$ defined at the beginning of this section.
In this subsection, we prove the relative compactness of $\{{\mathbb Q}^\rho\}_{\rho>0}$
and characterize its limit points. We first establish relative compactness.

\begin{lm}\label{tight}
The family of probability measures $\{{\mathbb Q}^\rho\}_{\rho>0}$ is relatively compact.
\end{lm}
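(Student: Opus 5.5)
We will prove relative compactness of $\{\mathbb Q^\rho\}_{\rho>0}$ in $D([0,T],\mathcal M_+(M))$ with the standard reduction for measure-valued processes under the vague topology (cf. \cite[Chapter 4, Proposition 1.7]{KL99}). Fix a countable family $\{\varphi_j\}_{j\ge1}\subset C_c^\infty(M)$ that is dense in $C_c(M)$ in the sense that every $\varphi\in C_c(M)$ is a uniform limit of some $\varphi_j$'s supported in a fixed compact set. The vague topology on $\mathcal M_+(M)$ restricted to sets of locally uniformly bounded mass is metrized by $\sum_j 2^{-j}\min\{1,|\langle\pi,\varphi_j\rangle-\langle\pi',\varphi_j\rangle|\}$. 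It therefore suffices to prove two things. First, a compact containment condition. Second, tightness in $D([0,T],\mathbb R)$ of the real processes $\langle\pi_{\mathbb X_{k(\rho)}}(\xi^\rho(t)),\varphi\rangle$ for each fixed $\varphi\in C_c^\infty(M)$.

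\emph{Compact containment.} For every compact $K\subset M$ and every $\eta$ we have
\[
\pi_{\mathbb X_{k(\rho)}}(\eta)(K)\le(\omega_d\rho^d)^{-1}\sum_{X:\,x(X)\in K}m_{1/2}(X).
\]
By \eqref{nbd}, \eqref{1/2 to 1} and the bound \eqref{rho nbd}, this is at most $2\mu_1(U_1(K))$ for $\rho$ small. Hence the deterministic set $\{\pi:\pi(K_n)\le 2\mu_1(U_1(K_n))\ \forall n\}$, taken over an exhaustion $K_n$, is vaguely relatively compact and contains every path of $\pi_{\mathbb X_{k(\rho)}}(\xi^\rho(\cdot))$. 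The same bound gives $\sup_t|\langle\pi_{\mathbb X_{k(\rho)}}(\xi^\rho(t)),\varphi\rangle|\le C_\varphi$ uniformly in $\rho$, which yields the first Aldous condition.

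\emph{Aldous condition.} For a stopping time $\tau\le T$ and $\theta\le\gamma$, write the increment via the Dynkin martingale \eqref{dynkin}:
\[
f_\rho(\xi^\rho(\tau+\theta))-f_\rho(\xi^\rho(\tau))=\frac1\delta\int_\tau^{\tau+\theta}\mathcal L_{1/2,\delta}f_\rho(\xi^\rho(s))\,ds+M^\rho(\varphi)_{\tau+\theta}-M^\rho(\varphi)_\tau .
\]
By Lemma \ref{lm1}, the integrand equals $\langle\pi_{\mathbb X_{k(\rho)}}(\xi^\rho(s)),\Delta_1\varphi-V\varphi\rangle+o(1)$, uniformly in $\eta$, because the error bounds there were deterministic. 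By the mass bound above it is bounded by a constant $C_\varphi$, so the drift part is at most $C_\varphi\gamma$. For the martingale part we apply optional stopping and Lemma \ref{lm2}:
\[
\mathbb E\bigl[(M^\rho_{\tau+\theta}-M^\rho_\tau)^2\bigr]=\mathbb E\int_\tau^{\tau+\theta}\Gamma^\rho\,ds\le\mathbb E\int_0^{T+\gamma}\Gamma^\rho\,ds\to0 .
\]
Chebyshev's inequality then gives $\lim_{\gamma\to0}\limsup_{\rho\to0}\sup_{\tau,\theta\le\gamma}\mathbb P(|\cdot|>\varepsilon)=0$.

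The main obstacle is the uniformity in $\eta$ of the $o(1)$ in Lemma \ref{lm1}. The plan relies on the proof there only using deterministic bounds over pieces in $U_{2}(\operatorname{supp}\varphi)$, and we will check this. A second point is that Lemma \ref{lm2}'s bound for $\Gamma^\rho$ is pointwise and deterministic, so it extends to the horizon $T+\gamma$, and we will verify this as well. Finally, passing from the countable family $\{\varphi_j\}$ to all of $C_c(M)$ uses the uniform mass bound together with a $3\varepsilon$ argument.
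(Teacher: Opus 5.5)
Your proposal is correct and follows essentially the same route as the paper: a deterministic compact-containment bound on $\pi_{\mathbb X_{k(\rho)}}(\eta)(K_n)$ obtained from \eqref{nbd}, combined with the Dynkin decomposition, Lemmas \ref{lm1} and \ref{lm2}, and Aldous's criterion for each real process $\langle \pi_{\mathbb X_{k(\rho)}}(\xi^\rho(\cdot)),\varphi\rangle$. Applying Aldous to the full increment, as you do, is a slightly cleaner variant of the paper's term-by-term tightness; the one thing to tidy up is that your constant $2\mu_1(U_1(K_n))$ holds only for $\rho<\rho_0(n)$, so to get a single compact set containing all paths for all small $\rho$ you should use a bound $d_n$ that is uniform over $0<\rho\le 1$, as the paper does with its constant $c_n$.
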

\begin{proof}
It follows from \cite[Theorem 3.1]{Jak86} that to show the relative compactness of $\{{\mathbb Q}^\rho\}_{\rho>0}$
it is enough to show the following two conditions:
\begin{itemize}
\item[{\rm (i)}] There exists a compact set $\mc C\subset \mc M_+(M)$
such that for any sufficiently small $\rho>0$
$$
{\mathbb Q}^{\rho}(\pi_t\in\mc C \text{ for any $0\le t \le T$})=1.
$$
\item[{\rm (ii)}] For each $\varphi\in C_c^\infty(M)$, the family of random variables $\{\big \langle
\pi_{{\mathbb X}_{k(\rho)}}
\big( \xi^\rho(\cdot)\big), \varphi
\big \rangle\}_{\rho>0}$ is tight in $D([0,T], \bb R)$.
\end{itemize}

To prove (i), let $K_n$ be the closure of $B(o,n)$ for each $n\in \bb N$.
It is clear that $K_n$ is compact, $M=\bigcup_{n\in\bb N} K_n$ and 
$\mu_1(K_n)<\infty$ for all $n\in \mathbb N$.
It follows from \eqref{nbd} that
\begin{align*}
c_n:=\sup_{0<\rho\le1}\sup_{X\in {\mathbb X_{k(\rho)}}: x(X)\in K_n}\frac{\mu_{1/2} (\mathcal{N}_\rho (X))}{\omega_d \rho^d} < \infty,
\end{align*} 
for any $n\in\bb N$.
Therefore, for any sufficiently small $\rho>0$ and
any configuration $\eta\in\mc X_{k(\rho)}$, we have
\begin{align*}
\lan\pi_{\bb X_{k(\rho)}}(\eta), {\bf 1}_{K_n}\ran &\le 
\frac{1}{\omega_{d} \rho^{d}} \sum_{X\in {\mathbb X}: x(X)\in K_n}
\mu_{1/2}(X)\mu_{1/2}(\mc N_{\rho}(X))\\
&\le c_n\mu_{1/2}(K_{n+1})=:d_n<\infty.
\end{align*}
Let $\mc K$ be the set of all Radon measures $\pi\in\mc M_+(M)$ satisfying $\pi(K_n)\le d_n$ for each $n\in\bb N$.
Since $\sup_{\pi\in \mc K} \pi(C) < \infty$
for any compact set $C\subset M$, $\mc K$ is relatively compact (cf. \cite[Theorem 4.2]{Kal17}).
Letting $\mc C$ be the closure of $\mc K$, we obtain (i).

To prove (ii), we use the Dynkin martingale \eqref{dynkin}.
Fix a smooth function $\varphi\in C_c^\infty(M)$. It follows from Lemma \ref{lm1} that
$\big \langle
\pi_{{\mathbb X}_{k(\rho)}}
\big( \xi^\rho(t)\big), \varphi
\big \rangle$
can be rewritten as
\begin{align*}
M^\rho(\varphi)_{t}
+
\big \langle
\pi_{{\mathbb X}_{k(\rho)}}
\big( \xi^\rho(0)\big), \varphi
\big \rangle
+\int_{0}^{t} 
\langle
\pi_{{\mathbb X}_{k(\rho)}}
\big(\xi^\rho(s)\big),
\Delta_{1} \varphi - V\varphi
\big \rangle ds +o(1),
\end{align*}
for any $0\le t \le T$. Therefore, to prove (ii), it is enough to show that
the first three terms on the right-hand side are tight.
The tightness of the martingale term follows from Lemma \ref{lm2}.
The tightness of the second term immediately follows from the assumption on the initial distribution.
The tightness of the integral term follows from the Aldous criterion, see \cite[Proposition 4.1.6]{KL99}.
\end{proof}

We now set
\begin{align*}
{\mf S}:=\Big \{ \pi=&(\pi_{t})_{0\leq t \leq T}\in C([0,T],\mc M_+(M))\, ; \, \pi_{0}(dx)=u_{0}(x)\mu_{1}(dx)~\mbox{and} 
\\
&
\langle \pi_t,\varphi \rangle - \langle u_0,\varphi \rangle
-\int_0^t \langle \pi_s, \Delta_{1} \varphi - V\varphi\rangle ds =0,
\, \forall \varphi\in C_c^\infty(M),\, \forall t\in[0,T] \Big \},
\end{align*}
and let $\mf K_{\rm ac}\subset D([0,T],\mc M_+(M))$ be the set
of all $\pi=(\pi_t)_{0\le t \le T}\in D([0,T], \mc M_+(M))$ such that,
for any $0\le t \le T$,  $\pi_t$ is absolutely continuous with respect to $\mu_1$ and
its Radon--Nikodym density is bounded by $1$ almost everywhere.

The next lemma shows that every limit point of $\{{\mathbb Q}^\rho\}_{\rho>0}$ is concentrated 
on $\mf S\cap \mf K_{\rm ac}$.

\begin{lm}\label{cha}
Let ${\mathbb Q}^*$ be any limit point of the family $\{{\mathbb Q}^\rho\}_{\rho>0}$.
Then we have 
$${\mathbb Q}^*(\mf S\cap \mf K_{\rm ac})=1.$$
\end{lm}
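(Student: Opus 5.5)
We need to prove that any limit point $\mathbb Q^*$ of $\{\mathbb Q^\rho\}$ is concentrated on $\mathfrak S\cap\mathfrak K_{\rm ac}$. Fix a subsequence $\rho_n\searrow0$ with $\mathbb Q^{\rho_n}\Rightarrow\mathbb Q^*$. The plan has three parts, in this order: continuity of paths, absolute continuity with density $\le 1$, and the weak equation.

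\textbf{Continuity.} A single jump of $\xi^\rho$ changes $\langle\pi_{\mathbb X_{k(\rho)}},\varphi\rangle$ by at most $\|\varphi\|_\infty\sup_{x(X)\in\supp\varphi}m_{1/2}(X)/(\omega_d\rho^d)$. By the estimate in the proof of Lemma \ref{lm2}, this is $O(|\mathbb X_{k(\rho)}|^d)\to0$. Hence the maximal jump functional $\sup_t|\langle\pi_t,\varphi\rangle-\langle\pi_{t-},\varphi\rangle|$, which is continuous on $D([0,T],\mathbb R)$, vanishes in the limit. Running $\varphi$ over a countable dense family shows that $\mathbb Q^*$ is supported on $C([0,T],\mathcal M_+(M))$.

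\textbf{Absolute continuity.} For $\varphi\in C_c(M)$ with $\varphi\ge0$ and any $\eta$,
\[
\langle\pi_{\mathbb X_{k(\rho)}}(\eta),\varphi\rangle\le\frac{1}{\omega_d\rho^d}\sum_X m_{1/2}(X)\varphi(x(X)).
\]
Using (\ref{nbd}) and (\ref{1/2 to 1}) together with $|\mathbb X_{k(\rho)}|=o(\rho^3)$, the right-hand side is $\int\varphi\,d\mu_1+o(1)$, by the uniform continuity of $\varphi$ on a compact neighbourhood of its support (a Riemann-sum argument). The set $\{\pi:\langle\pi_t,\varphi\rangle\le\int\varphi\,d\mu_1\ \forall t\}$ is closed in $D([0,T],\mathcal M_+(M))$, or at least in $C([0,T],\cdot)$, where the limit lives. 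The portmanteau theorem, applied after the $o(1)$ has been absorbed with a slack $\varepsilon$, therefore gives $\mathbb Q^*$-a.s.\ $\pi_t\le\mu_1$ for all $t$ on a countable dense set of $\varphi$, hence for all $\varphi\ge0$. This yields absolute continuity with density in $[0,1]$.

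\textbf{Weak equation.} For fixed $\varphi\in C_c^\infty(M)$ and $\varepsilon>0$, the functional
\[
\Phi(\pi)=\sup_{t\le T}\Big|\langle\pi_t,\varphi\rangle-\langle\pi_0,\varphi\rangle-\int_0^t\langle\pi_s,\Delta_1\varphi-V\varphi\rangle ds\Big|
\]
is continuous at continuous paths, since $\Delta_1\varphi-V\varphi\in C_c(M)$. By Lemma \ref{lm1},
\[
\mathbb Q^{\rho}(\Phi>\varepsilon)\le\mathbb P(\sup_t|M^\rho(\varphi)_t|>\varepsilon/2)
\]
for $\rho$ small. Doob's inequality and Lemma \ref{lm2} then give $\mathbb E\sup_t|M^\rho_t|^2\le4\,\mathbb E\int_0^T\Gamma^\rho\,ds\to0$. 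Portmanteau on the open set $\{\Phi>\varepsilon\}$ yields $\mathbb Q^*(\Phi>\varepsilon)=0$. The initial condition $\pi_0=u_0\mu_1$ follows in the same way from (\ref{mainthm ini}), since $\pi\mapsto\langle\pi_0,\varphi\rangle$ is continuous. Intersecting over a countable set of $\varphi$ dense in $C_c^\infty(M)$ in the $C^2$ sense on compacts, and using the uniform bound from the previous step to pass to all $\varphi$, completes the proof.

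\textbf{Main obstacle.} The delicate point is the Riemann-sum bound in the absolute-continuity step. Only local uniform estimates are available, with no global bounds on $w$ or on curvature. The remedy is to restrict everything to $U_2(\supp\varphi)$, where (\ref{nbd}) and (\ref{1/2 to 1}) hold uniformly. The remaining care is to apply portmanteau only with continuous or semicontinuous functionals on the Skorokhod space.
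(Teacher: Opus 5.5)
Your proof is correct and uses the same ingredients as the paper. The equation comes from Lemma \ref{lm1} with Doob's inequality and Lemma \ref{lm2}, the initial datum from \eqref{mainthm ini}, and the density bound from the Riemann-sum estimate via \eqref{nbd}. Using $\varphi\ge0$ where the paper uses $|\varphi|$ makes no difference.

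The genuine difference is how you get continuity of the limit paths. The paper first passes to the limit in the integral identity for all $t\in[0,T]$, deferring details to \cite{KL99}, and then reads off continuity of $t\mapsto\langle\pi_t,\varphi\rangle$ from that identity. This is legitimate because your functional $\Phi$ is in fact continuous on all of $D([0,T],\mathcal M_+(M))$: the supremum over $t$ is invariant under time changes, and the time integral depends continuously on $\pi$. So the paper needs no analysis of jumps. You instead prove $\mathbb Q^*(C([0,T],\mathcal M_+(M)))=1$ first, from the jump size $O(|\mathbb X_{k(\rho)}|^d)$. That costs an extra step, but you then only need continuity of $\Phi$ at continuous paths, and each portmanteau step becomes transparent.

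That extra step needs one line of justification. $\xi^\rho$ is only \emph{some} solution of the martingale problem in Proposition \ref{prop existence}. So it is not automatic that $\langle\pi_{\mathbb X_{k(\rho)}}(\xi^\rho(t)),\varphi\rangle$ jumps only by the increment of a single exchange or killing move. This does follow from a compensator argument with indicators of the finitely many sites $X$ with $x(X)\in\supp\varphi$.

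You can avoid the issue altogether. The time integral in \eqref{dynkin} is continuous in $t$, so the jumps of $\langle\pi_{\mathbb X_{k(\rho)}}(\xi^\rho(\cdot)),\varphi\rangle$ are exactly the jumps of $M^\rho(\varphi)$. These are bounded by $2\sup_{t\le T}|M^\rho(\varphi)_t|$, which tends to $0$ in $L^2$ by Doob's inequality and Lemma \ref{lm2}.
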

\begin{proof}
We first show ${\mathbb Q}^*(\mf S)=1$.
It follows from Doob's inequality and Lemma \ref{lm2} that
\begin{align*}
\lim_{\rho \searrow 0} {\mathbb E} \left[ \sup_{0\leq t \leq T}
\left \vert M^\rho(\varphi)_{t}
\right \vert^2 \right]=0.
\end{align*}
This limit shows that the martingale term of the decomposition in Lemma \ref{lm1}
vanishes in the limit $\rho\searrow0$. A standard argument then shows that
\begin{align*}
{\mathbb Q}^*\left(  \pi \,\middle|\, \sup_{0\le t \le T} \left| \lan\pi_t,\varphi\ran - \lan u_0,\varphi\ran
-\int_0^t \lan\pi_s, \Delta_{1} \varphi - V\varphi\ran ds \right| > \varepsilon \right)=0,
\end{align*}
for any $\varepsilon>0$ and any $\varphi\in C_c^\infty(M)$.
Choosing positive rational values of $\varepsilon$ and a countable dense subset of
$C_c^\infty(M)$, and then using continuity in $\varphi$, we obtain
\begin{align*}
{\mathbb Q}^*\left(\pi \,\middle|\,  \lan\pi_t,\varphi\ran - \lan u_0,\varphi\ran
-\int_0^t \lan\pi_s, \Delta_{1} \varphi - V\varphi\ran ds =0,
\ \forall\varphi\in C_c^\infty(M),\ \forall t\in[0,T]\right)
= 1.
\end{align*}
The preceding integral identity implies that $t\mapsto \pi_t$ is continuous on $[0,T]$.
Hence ${\mathbb Q}^*(\mf S)=1$. See \cite[Chapter 4]{KL99} for details.

We next show ${\mathbb Q}^*(\mf K_{\rm ac})=1$.
Fix $\varphi\in C_c^\infty(M)$ and note that
\begin{align*}
\sup_{0\le t \le T}\left|\big \langle
\pi_{{\mathbb X}_{k(\rho)}}
\big( \xi^\rho(t)\big), \varphi
\big \rangle \right|\le 
\frac{1}{\omega_{d} \rho^{d}} \sum_{X\in {\mathbb X}: x(X)\in K_n}
\mu_{1/2}(X)\mu_{1/2}(\mc N_{\rho}(X))|\varphi(x(X))|,
\end{align*}
if $\supp\varphi\subset K_n$ for some $n\in\mathbb N$.
It follows from \eqref{nbd} that the right-hand side
converges as $\rho\searrow0$ to
\begin{align*}
\int_M w^{1/2}(x)|\varphi(x)| \mu_{1/2}(dx)
= \int_M |\varphi(x)| \mu_{1}(dx).
\end{align*}
Since ${\mathbb Q}^*$ is a limit point of $\{{\mathbb Q}^{\rho}\}_{\rho>0}$,
${\mathbb Q}^*$ is concentrated on all trajectories $\pi=(\pi_t)_{0\le t \le T}$ satisfying
\begin{align*}
\sup_{0\le t \le T} |\lan\pi_t, \varphi \ran| \le \int_M |\varphi(x)| \mu_1(dx).
\end{align*}
Therefore, we obtain ${\mathbb Q}^*(\mf K_{\rm ac})=1$.
\end{proof}

\subsection{Proof of Theorem \ref{mainthm}}\label{pf of main result}

We prove Theorem \ref{mainthm} in this subsection.
Theorem \ref{mainthm} is an immediate consequence of Lemmas \ref{tight} and \ref{cha} together with the uniqueness result for \eqref{hdleq}.

\begin{proof}[{\bfseries Proof of Theorem \ref{mainthm}}]
By Lemma \ref{tight}, the family $\{{\mathbb Q}^\rho\}_{\rho>0}$ has an accumulation point, 
denoted by ${\mathbb Q}^*$.
By Lemma \ref{cha} and Theorem \ref{thm:uniq},
${\mathbb Q}^*$ is concentrated on the unique bounded weak solution to \eqref{hdleq}.
Therefore the family $\{{\mathbb Q}^\rho\}_{\rho>0}$ converges as $\rho\searrow0$
to the Dirac measure concentrated on the trajectory, which is absolutely continuous and whose density is the unique weak solution to \eqref{hdleq}.
Since the limit point is concentrated on continuous trajectories, Theorem \ref{mainthm} then follows from the continuous mapping theorem.
\end{proof}
\begin{proof}[{\bfseries Proof of Corollary \ref{Co1}}]
Using (\ref{nbd}), we obtain
\begin{equation*}
\langle \widetilde{\pi}_{\mathbb{X}} (\eta), \varphi \rangle=
\left\langle \pi_{\mathbb{X}} (\eta), \frac{\varphi}{w^{1/2}} \right\rangle
+O_{B(o,r+2)}\left( \frac{|\mathbb{X}|}{\rho^2} +\rho^2 \right),
\end{equation*}
where $r>0$ is a radius so that $U_2(\supp \varphi) \subset B(o,r)$.
Since $|\mathbb{X}_{k(\rho)}|=o(\rho^3)$, the initial condition in (\ref{Co1 ini}) implies that 
\begin{align*}
\lim_{\rho \searrow 0} \nu^{k(\rho)}\bigg( \Big \vert 
\big \langle
\pi_{{\mathbb X}_{k(\rho)}}({\xi}^{\rho}(0)),
\frac{\varphi}{w^{1/2}}
\big \rangle
-\int_{M} u_{0}(x) \frac{\varphi (x)}{w^{1/2}(x)} {\mu}_1 (dx) \Big \vert >\varepsilon \bigg)=0
\end{align*}
for any $\varepsilon>0$ and $\varphi \in C^{\infty}_{c}(M)$. 
Theorem \ref{mainthm} yields, for every $0\le t \le T$,
\begin{align*}
\lim_{\rho \searrow 0} {\mathbb P}_{\nu^{k(\rho)}}\bigg( 
\Big \vert 
\big \langle
\pi_{{\mathbb X}_{k(\rho)}}({\xi}^{\rho}(t)),
\frac{\varphi}{w^{1/2}}
\big \rangle
-\int_{M} u(t,x) \frac{\varphi(x) }{w^{1/2}(x)} {\mu}_1(dx) \Big \vert 
>\varepsilon \bigg)=0
\end{align*}
holds for any $\varepsilon>0$ and $\varphi\in C^{\infty}_{c}(M)$.
Using (\ref{nbd}) again, we conclude (\ref{Co1 claim}).
\end{proof}
\begin{proof}[{\bfseries Proof of Corollary \ref{Co2}}]
The assumption (\ref{hatkatei}) implies that
\begin{equation}
\mu_{1/2}(X_{0(k(\rho))}) \langle \hat{\pi}_{\mathbb{X}_{k(\rho)}}(\eta), \varphi \rangle=
\left\langle \widetilde{\pi}_{\mathbb{X}_{k(\rho)}} (\eta) ,\varphi \right\rangle 
+o_{B(o,r+2)}(1)
\quad \mbox{ as }\rho \searrow 0,
\label{hatapprox}
\end{equation}
where $r>0$ is a radius so that $U_2(\supp \varphi) \subset B(o,r)$.
Then the initial condition in (\ref{Co2 ini}) implies (\ref{Co1 ini}), which in turn
implies (\ref{Co1 claim}). Using (\ref{hatapprox}) again, we conclude (\ref{Co2 claim}).
\end{proof}
\section{Uniqueness of bounded weak solutions to \eqref{hdleq}}\label{uni}
In this section, we prove the uniqueness of bounded weak solutions to \eqref{hdleq}
and give the explicit representation of the unique solution
in terms of the minimal Schr{\"o}dinger kernel.
The proof is divided into several steps.
We first extend \eqref{weak heat} to the case where the test function $\varphi$ is time-dependent.
We then prove that every bounded weak solution is smooth on $(0,T)\times M$, a property
known as parabolic regularity. This smoothness allows us to apply It\^o's formula and
obtain a Feynman--Kac type representation. The representation yields the desired
uniqueness and the explicit expression \eqref{explicit-representation}.

Recall that $(M, g, \mu_{\alpha})$ is a weighted Riemannian manifold with weighted measure 
$\mu_{\alpha}(dx)=w^{\alpha}(x)\,\mathrm{vol}_{g}(dx)$. Throughout this section, we restrict to the case $\alpha=1$ 
and, for brevity, write $\mu$, $\Delta_{\mu}$ and $p(t,x,y)$ in place of $\mu_1$, $\Delta_1$ and $p_1(t,x,y)$, 
respectively.
Throughout this section, we identify $u$ with the jointly measurable
version $\widetilde{u}$ given in Remark~2.4. This causes no ambiguity because $u$ appears below only through integrals with respect to
$dt \otimes \mu(dx)$.

As mentioned above, we extend \eqref{weak heat} to the case
where the test function $\varphi$ is time-dependent.

\begin{lm}
\label{lem:equivweak}
Let $\pi_{t}(dx)=u(t,x)\mu(dx)$ be a measure-valued bounded weak solution in the sense of
Definition~\ref{def:weak}.  Then
the map
$t\mapsto \langle \pi_{t}, \varphi \rangle$  
is continuous on $[0,T]$ for every $\varphi\in C_c^\infty(M)$
and 
\begin{align}\label{eq:weak-space-time}
-\int_0^T dt \int_M &
u(t,x)\,\partial_t \zeta (t,x) \mu(dx)
\nonumber \\
&=
\int_0^T dt \int_M u(t,x)\,\big(\Delta_\mu \zeta (t,x) -V(x) \zeta(t,x)\big)\mu(dx)
\end{align}
holds for every $\zeta\in C_c^\infty((0,T)\times M)$.
\end{lm}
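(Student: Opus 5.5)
The plan has two parts: continuity of $t\mapsto\langle\pi_t,\varphi\rangle$, and the space-time weak formulation \eqref{eq:weak-space-time}.

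\textbf{Continuity.} Fix $\varphi\in C_c^\infty(M)$. By \eqref{weak heat}, $\langle\pi_t,\varphi\rangle=\langle\pi_0,\varphi\rangle+\int_0^t\langle\pi_s,\Delta_\mu\varphi-V\varphi\rangle\,ds$. The integrand satisfies
\[
|\langle\pi_s,\Delta_\mu\varphi-V\varphi\rangle|\le C\int_M|\Delta_\mu\varphi-V\varphi|\,d\mu<\infty,
\]
using the uniform bound \eqref{RN-unif-bdd} and the compact support of $\varphi$. Hence the right-hand side is Lipschitz in $t$, and in particular continuous. (Continuity also follows directly from $\pi\in C([0,T],\mathcal M_+(M))$, since $\varphi\in C_c(M)$.) Moreover, $t\mapsto\langle\pi_t,\varphi\rangle$ is absolutely continuous, with a.e. derivative $\langle\pi_t,\Delta_\mu\varphi-V\varphi\rangle$.

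\textbf{Product test functions.} Take $\zeta(t,x)=a(t)\varphi(x)$ with $a\in C_c^\infty((0,T))$. Since $a(0)=a(T)=0$, integrating by parts in time for the absolutely continuous function $\langle\pi_t,\varphi\rangle$ gives
\[
-\int_0^T a'(t)\langle\pi_t,\varphi\rangle\,dt=\int_0^T a(t)\langle\pi_t,\Delta_\mu\varphi-V\varphi\rangle\,dt .
\]
Writing $\langle\pi_t,\cdot\rangle=\int u(t,x)\,\cdot\,\mu(dx)$ with the jointly measurable version from Remark 2.4, Fubini (legitimate by boundedness and compact support) yields \eqref{eq:weak-space-time} for $\zeta=a\otimes\varphi$. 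By linearity it then holds for finite sums of such products.

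\textbf{General test functions (main obstacle).} It remains to pass from finite sums of products to arbitrary $\zeta\in C_c^\infty((0,T)\times M)$. I would avoid a density argument in a Fréchet topology altogether and instead use pointwise-in-time application of the identity. For fixed $\zeta$, consider $F(t):=\langle\pi_t,\zeta(t,\cdot)\rangle$. All $\zeta(t,\cdot)$ are supported in a fixed compact set $K$, and
\[
\langle\pi_{t+h},\zeta(t+h)\rangle-\langle\pi_t,\zeta(t)\rangle=\langle\pi_{t+h}-\pi_t,\zeta(t)\rangle+\langle\pi_{t+h},\zeta(t+h)-\zeta(t)\rangle .
\]
For the first term, apply \eqref{weak heat} to $\varphi=\zeta(t,\cdot)$. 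For the second term, use $|\zeta(t+h)-\zeta(t)|\le h\|\partial_t\zeta\|_\infty\mathbf 1_K$ together with $\pi_s(K)\le C\mu(K)$. This shows $F$ is Lipschitz, with a.e. derivative
\[
F'(t)=\langle\pi_t,\Delta_\mu\zeta(t)-V\zeta(t)\rangle+\langle\pi_t,\partial_t\zeta(t)\rangle .
\]
The second summand follows by dominated convergence, since $(\zeta(t+h)-\zeta(t))/h\to\partial_t\zeta(t)$ uniformly on $K$. The first summand requires continuity of $s\mapsto\langle\pi_s,\psi\rangle$ at Lebesgue points, which is available because the integrand above is bounded. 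Finally, $F(0)=F(T)=0$ because $\zeta$ vanishes near $t=0$ and $t=T$, so $\int_0^TF'(t)\,dt=0$. This is exactly \eqref{eq:weak-space-time}. The delicate point is justifying this derivative computation; the alternative route, approximating $\zeta$ by tensor sums in $C^2$ with uniformly controlled supports (e.g. via Stone--Weierstrass on a compact neighborhood combined with a cutoff), works as a fallback.
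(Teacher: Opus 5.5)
Your proof is correct. For the extension from product test functions to a general $\zeta\in C_c^\infty((0,T)\times M)$, it takes a genuinely different route from the paper.

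\textbf{The paper's route.} It proves the identity for $\zeta=\eta\otimes\varphi$ exactly as you do and extends by linearity to $C_c^\infty((0,T))\otimes C_c^\infty(M)$. It then invokes density of this algebraic tensor product in the test-function topology, localizing with a partition of unity and citing Tr\`eves. Finally it passes to the limit using uniform convergence of $\partial_t\zeta_n$ and $\Delta_\mu\zeta_n$ on compacts together with $u\in L^\infty$.

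\textbf{Your route.} You differentiate $F(t)=\langle\pi_t,\zeta(t,\cdot)\rangle$ directly. You split each increment into a frozen-test-function part, controlled by \eqref{weak heat}, and a variation part, controlled by $\pi_s(K)\le C\mu(K)$. This makes $F$ Lipschitz with $F'(t)=\langle\pi_t,(\partial_t+\Delta_\mu-V)\zeta(t,\cdot)\rangle$, and $F(0)=F(T)=0$ finishes the proof.

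\textbf{Comparison.} Your argument is elementary and self-contained: it needs no density theorem and no localization. It also makes your separate product-function step superfluous. The paper's route is shorter given the citation, and the same tensor-product approximation is reused for the time-dependent Dynkin formula in Lemma~\ref{lem:ito-detailed}.

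\textbf{Two harmless imprecisions.} First, with $\psi_t:=(\Delta_\mu-V)\zeta(t,\cdot)$, the convergence $\frac{1}{h}\int_t^{t+h}\langle\pi_s,\psi_t\rangle\,ds\to\langle\pi_t,\psi_t\rangle$ holds at every $t$. This is simply because $s\mapsto\langle\pi_s,\psi_t\rangle$ is continuous, by vague continuity of $\pi$. No Lebesgue-point argument is needed, and one would be awkward here since $\psi_t$ changes with $t$. Second, the term $\langle\pi_{t+h},(\zeta(t+h)-\zeta(t))/h\rangle$ is handled by uniform convergence of the difference quotient on $K$ plus that same continuity, not by dominated convergence.
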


\begin{proof}
Since $C_c^\infty(M)\subset C_c(M)$, the continuity of 
$t\mapsto \langle \pi_{t}, \varphi \rangle$ on $[0,T]$
is immediate from $\pi=(\pi_{t})_{0\leq t \leq T}\in C([0,T], \Mplus)$.
For $\varphi\in C_c^\infty(M)$, set
\[
A_\varphi(t):=\langle \pi_{t}, \varphi \rangle,
\qquad
B_\varphi(t):=\langle \pi_{t}, (\Delta_{\mu}-V)\varphi \rangle, \qquad 0\leq t\leq T.
\]
As noted above, $A_\varphi$ is continuous on $[0,T]$, 
and \eqref{RN-unif-bdd} ensures that $B_\varphi$ is bounded on $[0,T]$.
Then \eqref{weak heat} yields
\begin{align}
A_\varphi(t)=A_\varphi(0)+\int_0^t B_\varphi(s)\,ds, \quad 0\leq t \leq T,
\label{May24-1}
\end{align}
so $A_\varphi$ is absolutely continuous 
and $A_\varphi'(t)=B_\varphi(t)$ for a.e.\ $t\in[0,T]$.

Now fix $\eta\in C_c^\infty((0,T))$ and set $\zeta(t,x):=\eta(t)\varphi(x)$. 
Since $\eta(0)=\eta(T)=0$,
we multiply \eqref{May24-1} by $\eta'(t)$ and integrate over $[0,T]$ to obtain
\[
\int_0^T A_\varphi(t)\eta'(t)\,dt
= -\int_0^T B_\varphi(t)\eta(t)\,dt,
\]
which is exactly \eqref{eq:weak-space-time} for $\zeta=\eta \varphi$.
By linearity, \eqref{eq:weak-space-time}
extends to every element of the algebraic tensor
product $C_c^\infty((0,T))\otimes C_c^\infty(M)$.

We recall that the
algebraic tensor product \(C_c^\infty((0,T))\otimes C_c^\infty(M)\)
is dense in \(C_c^\infty((0,T)\times M)\) with respect to the standard
test-function topology. Indeed, after localizing the spatial variable by
a finite partition of unity on the compact spatial projection of the
support, this elementary density fact reduces to the Euclidean case 
(see e.g., \cite[Theorem 39.2]{Tre67}).
Then for any $\zeta \in C^{\infty}_{c}((0,T)\times M)$, 
there exists a sequence $\{ \zeta_{n} \}_{n=1}^{\infty} \subset C_c^\infty((0,T))\otimes C_c^\infty(M)$ 
such that $\partial_{t}\zeta_{n}$ and $\Delta_{\mu}\zeta_{n}$ 
converge to $\partial_{t}\zeta$ and $\Delta_{\mu}\zeta$, respectively,
uniformly on each compact subset $K\Subset (0,T)\times M$.
Combining this with the uniform bound $u\in L^{\infty}((0,T)\times M)$, 
we may pass to the limit in \eqref{eq:weak-space-time} for $\zeta_{n}$ 
to obtain the identity for $\zeta\in C^{\infty}_{c}((0,T)\times M)$.
\end{proof}
\subsection{Local parabolic regularity}
In this subsection, we show that bounded distributional solutions of our
parabolic equation (\ref{eq:weak-space-time}) are smooth in the interior. The proof of the following
lemma relies on the interior regularity theory for \emph{distributional}
solutions of parabolic equations developed in \cite[Section 6.4]{Gri09},
which fits our setting particularly well since the operator $\Delta_\mu$ is
symmetric with respect to $\mu$. 
\begin{lm}
\label{lem:interior-smooth}
If $v\in L^\infty((0,T)\times M)$ satisfies (\ref{eq:weak-space-time})
for every $\zeta\in C_c^\infty((0,T)\times M)$, then
$
v \in C^\infty((0,T)\times M)$.
\end{lm}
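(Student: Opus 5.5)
The claim is local, so I will work in a small coordinate cylinder and reduce to the interior regularity theory for distributional solutions of $\partial_t v = \Delta_\mu v$ in \cite[Section 6.4]{Gri09}. The potential term $-Vv$ will be handled by a bootstrap.

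\emph{Localization.} Fix a point $(t_0,x_0)\in(0,T)\times M$ and a relatively compact chart $U\ni x_0$, together with an interval $I\Subset(0,T)$ containing $t_0$. In local coordinates on $U$, the operator $L:=\Delta_\mu - V$ is uniformly elliptic and has smooth coefficients:
\[
L = \frac{1}{w\sqrt{\det g}}\,\partial_i\big(w\sqrt{\det g}\,g^{ij}\partial_j\big) - V.
\]
The identity \eqref{eq:weak-space-time}, tested against $\zeta\in C_c^\infty(I\times U)$, says precisely that $v$ is a distributional solution of $(\partial_t - L)v=0$ on $I\times U$. Here I use that $\Delta_\mu$ is symmetric with respect to $\mu$, and that the multiplication operator $V$ is symmetric as well. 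Hence $\partial_t - L$ is the formal adjoint (with respect to $dt\otimes\mu$) of $-\partial_t - L$, which is the operator acting on $\zeta$. Since $v$ is bounded, it lies in $L^2_{\rm loc}$.

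\emph{Bootstrap.} Rewrite the equation as $(\partial_t-\Delta_\mu)v = -Vv =: f$ in the distributional sense. Here $f\in L^2_{\rm loc}$, because $V$ is smooth and $v$ is bounded. The local parabolic regularity in \cite[Section 6.4]{Gri09} is stated for the heat operator with source terms in the local parabolic Sobolev scale; concretely, one uses mollification in local charts together with Gårding/energy estimates for the difference quotients $\partial_t^k \nabla^m$. It gives the following gain: if $(\partial_t-\Delta_\mu)v = f$ with $f\in H^{k}_{\rm loc}$ (in the parabolic sense), then $v\in H^{k+2}_{\rm loc}$ on slightly smaller cylinders. Applying this with $f=-Vv$, we get $v\in H^2_{\rm loc}$. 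Then $f=-Vv\in H^2_{\rm loc}$, since multiplication by the smooth function $V$ preserves local Sobolev classes. Iterating, $v\in\bigcap_k H^k_{\rm loc}(I'\times U')$. The parabolic Sobolev embedding then yields $v\in C^\infty(I'\times U')$, after modification on a null set. As $(t_0,x_0)$ was arbitrary, $v\in C^\infty((0,T)\times M)$.

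An alternative that avoids the source term altogether is to apply the hypoellipticity of $\partial_t - L$ directly. The operator $\partial_t - \Delta_\mu + V$ is a parabolic operator with smooth coefficients, hence hypoelliptic (for instance, by Hörmander's theorem), so any distributional solution is smooth.

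\emph{Main obstacle.} The main obstacle is to make sure the cited interior theory in \cite{Gri09} really applies. That theory is formulated for $\partial_t - \Delta_\mu$ acting on distributions that are a priori only in $L^2_{\rm loc}$, possibly without any $t$-regularity. I would therefore check that its first step, that an $L^2_{\rm loc}$ distributional solution with $L^2_{\rm loc}$ right-hand side belongs to the local parabolic space, only requires $f\in L^2_{\rm loc}$. If the source-term version is not directly available there, I would fall back on the hypoellipticity argument above.
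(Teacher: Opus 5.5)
Your proposal is correct and follows essentially the same route as the paper. The paper localizes to a chart cylinder $Q=(a,b)\times U$ and rewrites the equation as $\mathcal{P}v=-\rho Vv$, where $\mathcal{P}=\rho\,\partial_t-\partial_i(a^{ij}\partial_j)$, $\rho=w\sqrt{\det g}$ and $a^{ij}=\rho g^{ij}$; this is your $(\partial_t-\Delta_\mu)v=-Vv$ multiplied by $\rho$. It then applies \cite[Theorem 6.25]{Gri09} with $-\rho Vv\in V^{0}_{loc}(Q)$ to get $v\in V^{2}_{loc}(Q)$, bootstraps to $V^{\infty}_{loc}(Q)$, and concludes by the Sobolev embedding. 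The obstacle you flag does not arise, since the cited theorem only needs the source term in $V^{0}_{loc}$, so your hypoellipticity fallback is valid but not needed.
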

\begin{proof}
Fix $(t_0,x_0)\in(0,T)\times M$ and choose $0<a<t_0<b<T$ and a coordinate
neighborhood $U\subset M$ of $x_0$ with relatively compact closure. We set
$Q:=(a,b)\times U$, identified with an open subset of $\R\times\R^d$. 
The weak form (\ref{eq:weak-space-time}) restricted to test functions
$\zeta\in C_c^\infty(Q)\subset C_c^\infty((0,T)\times M)$ becomes
\begin{equation}
(\partial_t -(\Delta_\mu-V))v=0 \quad \mbox{in }   \mathcal{D}^\prime (Q).
\label{parab dist}
\end{equation}

Let $(x^1,\dots,x^d)$ be the local
coordinates on $U$, and write $(g_{ij})$ for the components of the metric
$g$ in these coordinates and $(g^{ij}):=(g_{ij})^{-1}$ on $U$. Since
$\mu(dx)=w(x)\,\mathrm{vol}_g(dx)$ with the smooth positive weight $w$, and
since $\mathrm{vol}_g(dx)=\sqrt{\det g(x)}\,dx$ in the chart, the measure
$\mu$ has the smooth positive density
\[
\rho(x):=w(x)\sqrt{\det g(x)},
\qquad
\mu(dx)=\rho(x)\,dx,
\]
with respect to the Lebesgue measure $dx$ of the chart. Accordingly, the
weighted Laplacian takes the divergence form
\[
\Delta_\mu\varphi
=\frac{1}{w\sqrt{\det g}}\sum_{i,j=1}^{d}
\partial_i\Bigl(w\sqrt{\det g}\,g^{ij}\,\partial_j\varphi\Bigr)
=\frac{1}{\rho}\sum_{i,j=1}^{d}\partial_i\bigl(\rho g^{ij} \partial_j\varphi\bigr),
\qquad \varphi\in C^\infty(U).
\]
We now set
\[
 a^{ij}:=\rho g^{ij}=w\sqrt{\det g} g^{ij}, \qquad i,j=1,\ldots, d,
\]
and introduce the parabolic differential operator
\[
\mathcal{P}:=\rho\,\partial_t
-\sum_{i,j=1}^{d}\partial_i\big( a^{ij}\partial_j \bigr)
\]
in the sense of \cite[Section 6.4.2]{Gri09}, where
the coefficients $\rho>0$ and
$a^{ij}=a^{ji}$ are smooth on $U$, independent of $t$, and
$(a^{ij})$ is positive definite, as required there. 
Then the equation (\ref{parab dist})
can be rewritten as
\begin{align}\label{eq:Gri-eq}
\mathcal{P}v=- \rho V v\qquad\text{in }\mathcal{D}'(Q).
\end{align}
Since $\rho$ and $V$ are smooth, the assumption $v \in L^\infty ((0,T)\times U)$ implies that  
$-\rho V v \in V^{0}_{loc}(Q)$ (see \cite[6.4.1]{Gri09} for the definition of the anisotropic Sobolev space $V^k_{loc}(Q)$). Applying \cite[Theorem 6.25]{Gri09}, we obtain 
$v \in V^{2}_{loc}(Q)$. By the usual bootstrap argument, 
we obtain $v \in V^{\infty}_{loc}(Q)$. The Sobolev embedding theorem
(see \cite[Theorem 6.1]{Gri09}) then yields $v \in C^\infty(Q)$.
\end{proof}
\subsection{A Feynman--Kac type formula}
Let $(X_t)_{t\geq 0}$ be the (weighted) Brownian motion
on the weighted manifold $(M, g,\mu)$ generated by the
weighted Laplacian $\Delta_\mu$. We consider a function 
$z\in C^\infty((0,T)\times M)\cap L^\infty((0,T)\times M)$ satisfying
\[
\partial_t z(t,x) = \Delta_\mu z(t,x) - V(x)z(t,x), \quad (t,x)\in (0,T)\times M.
\]
Using the parabolic regularity of bounded weak solutions discussed in Section 4.1
together with arguments from stochastic analysis,
we obtain the following {\it{Feynman--Kac type formula}}, which plays a key role in the proof of 
Theorem \ref{thm:uniq}. 
\begin{lm}\label{lem:ito-detailed}
Assume that $(M, g, \mu)$ is stochastically complete. Then for 
fixed $t\in(0,T)$, $\varepsilon\in(0,t)$ and $x\in M$, 
we have
\begin{equation}\label{eq:stopped-FK}
z(t,x)
=
\E_x\!\left[
\exp\!\Bigl(-\int_0^{t-\varepsilon}V(X_r)\,dr\Bigr)
 z\bigl(\varepsilon, X_{t-\varepsilon}\bigr)
\right],
\end{equation}
where ${\mathbb E}_{x}$ denotes the expectation with respect to the probability law ${\mathbb P}_{x}$.
\end{lm}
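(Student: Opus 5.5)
The plan is to apply It\^o's formula to the time-reversed process $s\mapsto \exp(-\int_0^s V(X_r)\,dr)\, z(t-s, X_s)$ for $s\in[0,t-\varepsilon]$. First localize in space to avoid integrability issues. Fix an exhaustion of $M$ by relatively compact open sets $\Omega_n\nearrow M$, and let $\tau_n:=\inf\{s>0;\, X_s\notin \Omega_n\}$. Define
\[
Y_s:=\exp\Bigl(-\int_0^{s}V(X_r)\,dr\Bigr) z(t-s,X_s), \qquad 0\le s\le t-\varepsilon .
\]
Since $z$ is smooth on $(0,T)\times M$ and $t-s\in[\varepsilon,t]$, the function $z$ and all its derivatives are bounded on $[\varepsilon,t]\times\overline{\Omega_n}$. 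Hence It\^o's formula applies to $Y_{s\wedge\tau_n}$. One can realize $(X_s)$ as the solution of a martingale problem for $\Delta_\mu$; equivalently, for every $f\in C^{1,2}$, the process $f(s,X_s)-\int_0^s(\partial_r+\Delta_\mu)f(r,X_r)\,dr$ is a local martingale up to $\zeta$. This holds with the normalization in which $\Delta_\mu$ itself is the generator.

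Using this, the drift of $Y$ equals
\[
e^{-\int_0^s V}\bigl(-\partial_t z+\Delta_\mu z - Vz\bigr)(t-s,X_s),
\]
which vanishes by the equation satisfied by $z$. Therefore $Y_{s\wedge\tau_n}$ is a bounded martingale, and its boundedness follows from the $L^\infty$ bound on $z$ together with $V\ge0$. Taking expectations at $s=t-\varepsilon$ gives
\[
z(t,x)=\E_x\bigl[Y_{(t-\varepsilon)\wedge\tau_n}\bigr].
\]

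The next step is to remove the localization, which is where stochastic completeness enters. By (\ref{SC-life}), $\zeta=\infty$ holds $\mathbb P_x$-a.s. Because $\overline{\Omega_n}$ is compact and $X$ has continuous paths in $M$, we get $\tau_n\to\zeta=\infty$ almost surely. Consequently $(t-\varepsilon)\wedge\tau_n=t-\varepsilon$ for all large $n$, almost surely, and so $Y_{(t-\varepsilon)\wedge\tau_n}\to Y_{t-\varepsilon}$ almost surely. We also have the uniform bound $|Y_{(t-\varepsilon)\wedge\tau_n}|\le \|z\|_{L^\infty}$, since the exponential factor is at most $1$. Dominated convergence then yields (\ref{eq:stopped-FK}).

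The main obstacle is the remark that $z\in L^\infty$ is defined only as a bounded function almost everywhere. Since $z$ is continuous, however, its supremum over the interior equals its essential supremum, so the pointwise bound $|z|\le \|z\|_{L^\infty}$ holds everywhere on $(0,T)\times M$. A second point to check is that It\^o's formula holds for the manifold Brownian motion before exit from a chart-free relatively compact set. This follows from the martingale problem characterization on $\Omega_n$, where all coefficients are smooth and bounded.
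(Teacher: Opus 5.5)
Your proposal is correct and follows essentially the same route as the paper. You localize by exit times from a relatively compact exhaustion, show that $e^{-\int_0^{s\wedge\tau_R}V(X_r)\,dr}\,z(t-s\wedge\tau_R,X_{s\wedge\tau_R})$ is a bounded martingale because its drift vanishes by the equation, and then remove the localization using stochastic completeness and dominated convergence. The only difference is that you take the $C^{1,2}$ time-dependent martingale-problem characterization as standard. The paper instead derives the stopped time-dependent Dynkin formula from the time-independent martingale problem, using a cutoff $\chi_R$ and tensor-product approximation, and applies It\^o's product rule in two separate steps.
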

\begin{proof}
First of all, we put $S:=t-\varepsilon$.  
For the fixed initial point $x\in M$, let
$
\mathscr F_s^X
:=
\sigma(X_r;\,0\leq r\leq s),
s\geq0,
$
be the natural filtration generated by $X$.
All martingales below are understood with respect to
$(\mathscr F_s^X)_{s\geq0}$ under $\mathbb P_x$.

We prepare, in the precise form needed below,
the stopped time-dependent Dynkin formula for the 
(weighted) Brownian motion.
Let $D\Subset M$ be an open set, 
\[
\tau_D:=\inf\{s\geq0:X_s\notin D\},
\qquad
\theta_s^D:=s\wedge\tau_D,
\quad 0\leq s\leq S,
\]
$I\subset\mathbb R$ be an open interval containing $[0,S]$ and $F\in C^{\infty}(I\times M)$.
Suppose that $x\in D$ and that there exists a compact set
$K\Subset M$ such that $\operatorname{supp}F(r,\cdot)\subset K$ for every
$r\in I$.  Then
\begin{equation}\label{eq:intrinsic-time-dynkin}
\mathcal N_s^{F,D}
:=F\bigl(\theta_s^D,X_{\theta_s^D}\bigr)-F(0,x)\\
-\int_0^{\theta_s^D}
  (\partial_r+\Delta_\mu)F(r,X_r)\,dr,
\qquad 0\leq s\leq S
\end{equation}
is a continuous martingale under $\mathbb P_x$.
For completeness, we give a proof of this formula without introducing coordinates or
a global frame.  Since $X$ is the diffusion generated by $\Delta_\mu$, for
every $f\in C_c^\infty(M)$,
\[
M_s^{f,D}
:=f\bigl(X_{\theta_s^D}\bigr)-f(x)
  -\int_0^{\theta_s^D}\Delta_\mu f(X_r)\,dr,
\qquad 0\leq s\leq S,
\]
is a continuous local 
martingale under $\mathbb P_x$.
Moreover, noting
\[
\sup_{0\leq s\leq S}|M_s^{f,D}|
\leq 2\|f\|_\infty+S\|\Delta_\mu f\|_\infty,
\]
it is in fact a bounded continuous martingale.  

For $F(r,y)=\phi(r)f(y)$ with $\phi \in C^\infty(I)$ and
$f\in C_c^\infty(M)$, we put $Y_s:=f(X_{\theta_s^D})$ and
$Z_s:=\phi(\theta_s^D)$.
By the definition of $M^{f,D}$, we see
\[
dY_s
=
{\bf 1}_{\{s<\tau_D\}}\Delta_\mu f(X_s)\,ds
+dM_s^{f,D}, \quad
dZ_s
=
{\bf 1}_{\{s<\tau_D\}}\psi'(s)\,ds.
\]
Since $Z$ is a continuous process of finite variation, It\^{o}'s formula gives
\begin{align*}
F(\theta_{s}^{D}, X_{\theta_s^D})-F(0, x)
={}&
\phi(\theta_s^D)f(X_{\theta_s^D})-\phi(0)f(x)
\\
={}&
\int_0^{\theta_s^D}
\phi'(r)f(X_r)\,dr 
+
\int_0^{\theta_s^D}
\phi(r)\Delta_\mu f(X_r)\,dr
+
\int_0^s \phi(r)\,dM_r^{f,D},
\end{align*}
where we used the fact that $M^{f,D}$ is constant after $\tau_D$.
Combining this with
\[
(\partial_r+\Delta_\mu)F(r,y)
=
\phi'(r)f(y)+\phi(r)\Delta_\mu f(y),
\]
we obtain
\[
\mathcal N_s^{F,D}
=
\int_0^s \phi(r)\,dM_r^{f,D},
\qquad 0\leq s\leq S,
\]
and hence \eqref{eq:intrinsic-time-dynkin} is a continuous martingale for every
finite sum of functions of the form $\phi(r)f(y)$.
For a general $F$ as above, choose a function
$\psi \in C_c^\infty(I)$ such that $\psi \equiv 1$
on an open neighborhood of $[0,S]$.  
The sole purpose of the cutoff $\psi$ is to make
$\psi F\in C_c^\infty(I\times M)$.
Since $\psi \equiv 1$ on a neighborhood of $[0,S]$,
we have
\[
\mathcal N_s^{\psi F,D}
=
\mathcal N_s^{F,D},
\qquad 0\leq s\leq S.
\]
The standard
tensor-product density for test functions yields a sequence
\[
F_n(r,y)=\sum_{j=1}^{m_n}\phi_{n,j}(r)f_{n,j}(y),
\qquad
\phi_{n,j}\in C_c^\infty(I),
\quad
f_{n,j}\in C_c^\infty(M),
\]
which converges to $\psi F$ in the test-function topology of
$C_c^\infty(I\times M)$.  In particular, on
$[0,S]\times\overline D$, we have
\[
F_n\longrightarrow F,
\qquad
\partial_rF_n\longrightarrow\partial_rF,
\qquad
\Delta_\mu F_n\longrightarrow\Delta_\mu F \quad \mbox{uniformly as } n\to \infty.
\]
This is the same smooth tensor-product approximation argument used in the
proof of Lemma~\ref{lem:equivweak}.  Consequently,
\begin{align*}
\sup_{0\leq s\leq S}
\bigl|\mathcal N_s^{F_n,D}-\mathcal N_s^{F,D}\bigr|
&\leq
2\|F_n-F\|_{L^\infty([0,S]\times\overline D)}
+S\|(
\partial_r+\Delta_\mu)(F_n-F)
\|_{L^\infty([0,S]\times\overline D)}
\\
&
\longrightarrow0 \quad \mbox{as } n\to \infty.
\end{align*}
The convergence is deterministic and uniform in $s$.  
Hence, for $0\leq u\leq s\leq S$ and every bounded
$\mathscr F_u^X$-measurable random variable $G$,
we may pass to the limit in
\[
\mathbb E_x\bigl[G(
\mathcal N_s^{F_n,D}-\mathcal N_u^{F_n,D})\bigr]=0
\]
to obtain the corresponding identity for $\mathcal N^{F,D}$.  This proves
that $\mathcal N^{F,D}$ is a continuous martingale under ${\mathbb P}_{x}$.

We now choose a nested smooth exhaustion
\begin{equation}\label{eq:nested-exhaustion}
D_1\Subset D_2\Subset\cdots\Subset M,
\qquad
\bigcup_{R=1}^\infty D_R=M,
\end{equation}
where every $D_R$ has smooth boundary.  If $M$ is compact, we may simply
take $D_R=M$ for every $R$.  
For each $R\geq1$, we set
\[
\tau_R:=\inf\{s\geq0:X_s\notin D_R\}.
\]
Since \eqref{eq:nested-exhaustion} exhausts $M$, $(\tau_{R})_{R\geq 1}$ is non-decreasing.

We next choose $\chi_R\in C_c^\infty(D_{R+1})$ such that
$0\leq\chi_R\leq1$ on $M$
and 
$\chi_R\equiv1$ on an open neighborhood of $\overline{D_R}$.
We define
\[
F_R(r,y):=\chi_R(y)z(t-r,y),
\qquad (r,y)\in[0,S]\times M.
\]
The restriction $0<\varepsilon<t<T$ is important here.  Indeed, one may
choose $\eta>0$ so small that
$[\varepsilon-\eta,t+\eta]\subset(0,T)$, and hence $F_R$ extends smoothly to
$(-\eta,S+\eta)\times M$.  Moreover, its spatial support is contained in
the fixed compact set $\operatorname{supp}\chi_R\Subset D_{R+1}$.  We may
therefore apply \eqref{eq:intrinsic-time-dynkin} with $D=D_R$ and
$F=F_R$.

For $0\leq s\leq S$, put
\[
\theta_s^{(R)}:=s\wedge\tau_R,
\qquad
A_s^{(R)}:=\int_0^{\theta_s^{(R)}}V(X_r)\,dr,
\qquad
Y_s^{(R)}:=z\bigl(t-\theta_s^{(R)},X_{\theta_s^{(R)}}\bigr).
\]
By continuity of the sample paths of the (weighted) Brownian motion, 
$X_{\theta_s^{(R)}}\in\overline{D_R}$ holds.  Hence
we have
\[
F_R\bigl(\theta_s^{(R)},X_{\theta_s^{(R)}}\bigr)=Y_s^{(R)},
\qquad
F_R(0,x)=z(t,x).
\]
Furthermore, for $r<\tau_R$, we have $X_r\in D_R$. Therefore
$\chi_R(X_r)=1$,
$\nabla\chi_R(X_r)=0$ and 
$\Delta_\mu\chi_R(X_r)=0$ hold.
Using the product rule for $\Delta_\mu$, we consequently obtain
\begin{align*}
(\partial_r+\Delta_\mu)F_R(r,X_r)
&=
\bigl(-\partial_tz+\Delta_\mu z\bigr)(t-r,X_r)\\
&=V(X_r)z(t-r,X_r),
\qquad r<\tau_R,
\end{align*}
where the last equality follows from
$\partial_tz=\Delta_\mu z-Vz$.
Thus the stopped time-dependent Dynkin formula 
\eqref{eq:intrinsic-time-dynkin} yields a continuous
martingale $(N_s^{(R)})_{0\leq s\leq S}$ such that
\begin{align}
Y_s^{(R)}
&=z(t,x)
 +\int_0^s\1_{\{r<\tau_R\}}
 V(X_r)Y_r^{(R)}\,dr
 +N_s^{(R)},
\qquad 0\leq s\leq S,
\label{eq:Y-decomp}
\end{align}
where we used that $Y_r^{(R)}=z(t-r,X_r)$ on the set
$\{r<\tau_R\}$.

We now set
\[
U_s^{(R)}:=e^{-A_s^{(R)}}Y_s^{(R)},
\qquad 0\leq s\leq S.
\]
Since $A^{(R)}$ and $Y^{(R)}$ satisfy
\[
dA_s^{(R)}=
\1_{\{s<\tau_R\}}V(X_s)\,ds, \qquad 0\leq s \leq S,
\]
and \eqref{eq:Y-decomp}, respectively, it follows from It\^{o}'s formula that
\begin{align}
dU_s^{(R)}
&=e^{-A_s^{(R)}}\,dY_s^{(R)}
  -e^{-A_s^{(R)}}Y_s^{(R)}\,dA_s^{(R)}
\notag\\
&=
e^{-A_s^{(R)}}\,dY_s^{(R)}-{\bf 1}_{\{s<\tau_{R}\}}V(X_{s})Y_{s}^{(R)}e^{-A_{s}^{(R)}}ds
=
e^{-A_s^{(R)}}\,dN_s^{(R)}.
\label{eq:dF}
\end{align}
Therefore $U^{(R)}$ is a continuous local martingale.  On the other hand, since $0<e^{-A_s^{(R)}}\leq1$, 
we also have
\[
|U_s^{(R)}|
\leq |Y_s^{(R)}|
\leq\|z\|_{L^\infty((0,T)\times M)},
\qquad 0\leq s\leq S.
\]
Thus $U^{(R)}$ is a bounded continuous martingale.  Equivalently,
\begin{equation}\label{eq:ito-product}
M_s^{(R)}
:=e^{-A_s^{(R)}}Y_s^{(R)}-z(t,x),
\qquad 0\leq s\leq S,
\end{equation}
is a bounded continuous martingale.  Taking expectations at $s=S=t-\varepsilon$
gives
\begin{equation}
\label{FKF-2}
z(t,x)
=
\E_x\!\left[
\exp\!\Bigl(-\int_0^{(t-\varepsilon)\wedge\tau_R}V(X_r)\,dr\Bigr)
z\bigl(t-(t-\varepsilon)\wedge\tau_R,X_{(t-\varepsilon)\wedge\tau_R}\bigr)
\right].
\end{equation}

Finally, we recall stochastic completeness \eqref{SC-life}. Combining this with
\eqref{eq:nested-exhaustion}, we easily see 
${\mathbb P}_{x}\big(\lim_{R\to \infty}\tau_{R}=\infty \big)=1$.
Hence for the fixed finite time $t-\varepsilon$, $\lim_{R\to \infty}(t-\varepsilon)\wedge \tau_{R}=t-\varepsilon$
holds $\mathbb P_{x}$-almost surely. Since $z(\varepsilon, \cdot)$ is smooth and bounded, the dominated 
convergence theorem applied to \eqref{FKF-2} yields the desired formula \eqref{eq:stopped-FK}.
\end{proof}

For later use, assuming stochastic completeness of $M$,
we collect several standard facts about the \textit{Feynman--Kac semigroup}
associated with the potential $V$, defined by
$$T^{V}_{t}f(x):={\mathbb E}_{x}\Big[ \exp \big(-\int_{0}^{t}V(X_{s})\,ds \big) f(X_{t}) \Big], \quad f\in B_{b}(M).$$
\begin{lm}\label{FK-property}
Let $p^{V}(t,x,y)$ be the minimal Schr{\"o}dinger kernel function 
introduced in Section {\rm{2}}. Then we have
\begin{align}
T^{V}_{t}f(x)&=\int_{M} p^{V}(t,x,y) f(y) \mu(dy), \quad t>0, \, x\in M, \, f\in B_{b}(M).
\label{TV-kernel}
\end{align}
Moreover,
\begin{align}
p^{V}(t,x,y)&=p(t,x,y){\mathbb E}_{x,y;t}\big[ \exp \big(-\int_{0}^{t}V(X_{s})ds\big)\big], \quad t>0, \, x,y\in M,
\label{TV-kernel2}
\end{align}
where ${\mathbb E}_{x,y;t}$ denotes expectation under
the (weighted) Brownian bridge measure ${\mathbb P}_{x,y;t}$ on $C([0,t],M)$ with 
${\mathbb P}_{x,y;t}(X_{0}=x, X_{t}=y)=1$.
Consequently, we have
\begin{align} 0<p^{V}(t, x,y)\leq p(t,x,y), \quad t>0, \, x,y\in M.
\label{HK-comparison}
\end{align}
\end{lm}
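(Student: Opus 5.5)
The approach is to identify both sides of \eqref{TV-kernel} as the semigroup $e^{-tH_V}$ acting on $f$, where $H_V=H^{(1)}_V=-\Delta_\mu+V$. The identification is made first on an exhaustion $D_1\Subset D_2\Subset\cdots$ as in \eqref{eq:nested-exhaustion}, where everything is regular, and then passed to the limit $R\to\infty$ by monotonicity. Afterwards \eqref{TV-kernel2} is obtained by disintegrating the path measure over the endpoint, and \eqref{HK-comparison} then follows at once.

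\emph{Step 1 (Dirichlet level).} For each $R$, I consider the killed Feynman--Kac semigroup
\[
T^{V,R}_tf(x):=\E_x\bigl[e^{-\int_0^tV(X_s)ds}f(X_t)\1_{\{t<\tau_R\}}\bigr].
\]
On the relatively compact smooth domain $D_R$, the Dirichlet Schr\"odinger operator $-\Delta_\mu+V$ has a smooth kernel $p^V_{D_R}$. I will show that $T^{V,R}_tf=\int_{D_R}p^V_{D_R}(t,\cdot,y)f(y)\mu(dy)$ for $f\in C_c^\infty(D_R)$. To do so, set $z(s,y):=\int p^V_{D_R}(s,y,y')f(y')\mu(dy')$; this $z$ is smooth up to the boundary and vanishes on $\partial D_R$. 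I then repeat the stopped martingale argument of Lemma \ref{lem:ito-detailed} with $D=D_R$, letting $\varepsilon\downarrow0$ and using $z(\varepsilon,\cdot)\to f$ uniformly. Monotone class extends the identity to $f\in B_b(M)$.

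\emph{Step 2 (exhaustion).} As $R\uparrow\infty$, the kernels $p^V_{D_R}$ increase to the minimal Schr\"odinger kernel $p^V$. This is the standard characterization of the minimal kernel through exhaustions, as in \cite{Gri09} for $V=0$; it holds verbatim for $V\ge0$ by the parabolic maximum principle, and it is consistent with essential self-adjointness because the Friedrichs/minimal extension is unique. On the probabilistic side, $\1_{\{t<\tau_R\}}\uparrow\1_{\{t<\zeta\}}=1$ almost surely by stochastic completeness \eqref{SC-life}. For $f\ge0$, monotone convergence on both sides gives \eqref{TV-kernel}, and splitting $f=f^+-f^-$ gives the general case. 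I expect the main obstacle to be rigorously justifying the convergence $p^V_{D_R}\uparrow p^V$ and identifying the limit with the $L^2$ semigroup kernel. I would handle this through quadratic forms: the form of $H_V$ on $C_c^\infty(M)$, closed, is the monotone limit of the Dirichlet forms on $D_R$, and monotone convergence of forms (Kato--Simon) gives strong resolvent convergence.

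\emph{Step 3 (bridge representation and comparison).} Since $p(t,x,\cdot)$ is the transition density of $X_t$ under $\mathbb P_x$ and the process is conservative, the bridge measures $\mathbb P_{x,y;t}$ give the disintegration $\mathbb P_x(\,\cdot\,)=\int_M\mathbb P_{x,y;t}(\,\cdot\,)\,p(t,x,y)\mu(dy)$ on $\mathscr F^X_t$. Inserting this into the definition of $T^V_tf$ yields
\[
T^V_tf(x)=\int_M f(y)\,p(t,x,y)\,\E_{x,y;t}\bigl[e^{-\int_0^tV}\bigr]\mu(dy).
\]
Comparing with \eqref{TV-kernel} for all $f$ gives \eqref{TV-kernel2} for $\mu$-almost every $y$. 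Both sides are continuous in $y$: the left side is smooth, and the right side is continuous because of the continuity of bridges and the boundedness of $V$ on compacts. Hence the identity holds everywhere. Finally, $0<e^{-\int V}\le1$ gives $p^V\le p$, and $p^V>0$ was already recorded in Section 2, which proves \eqref{HK-comparison}.
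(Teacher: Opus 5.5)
Your proposal is correct, but it reaches \eqref{TV-kernel} by a different route than the paper.

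The paper stays global. For $f\in C_c^\infty(M)$ it takes $z(t,\cdot):=\int_M p^V(t,\cdot,y)f(y)\,\mu(dy)$, a bounded smooth solution of $\partial_t z=\Delta_\mu z-Vz$ on all of $M$, and plugs it into Lemma~\ref{lem:ito-detailed}. There, stochastic completeness is exactly what removes the stopping times $\tau_R$, since $z$ need not vanish at the exit point. It then lets $\varepsilon\to0$, runs the same monotone class argument you use, and reads off \eqref{TV-kernel2} from the bridge disintegration as in your Step 3.

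You instead localize to Dirichlet problems on $D_R$, where the boundary condition kills the stopped term outright. You then pass to the limit through the exhaustion characterization $p^V_{D_R}\uparrow p^V$.

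Your route has three advantages. Stochastic completeness enters only via $\1_{\{t<\tau_R\}}\uparrow 1$, so the argument actually proves $\int_M p^V(t,x,y)f(y)\mu(dy)=\E_x[e^{-\int_0^tV(X_s)ds}f(X_t)\1_{\{t<\zeta\}}]$ without that hypothesis. It justifies the name ``minimal''. And its $\varepsilon\to0$ limit is a uniform convergence on the compact set $\overline{D_R}$.

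The price is heavier machinery. You need boundary regularity for the Dirichlet Schr\"odinger problem, so that $z$ extends smoothly across $\partial D_R$ and \eqref{eq:intrinsic-time-dynkin} applies. You also need monotone convergence of forms, plus a local regularity argument to upgrade $L^2$ convergence of the semigroups to pointwise convergence of the kernels. The paper, by contrast, only needs that $p^V$ is a smooth sub-Markovian kernel solving the equation, with $P^V_\varepsilon f\to f$ locally uniformly.

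In Step 3 you are more careful than the paper. Matching integrals gives \eqref{TV-kernel2} only for $\mu$-a.e.\ $y$. Your upgrade via continuity in $y$ (weak continuity of the bridge laws in the endpoint, applied to the bounded continuous path functional $e^{-\int_0^tV}$) fills a point the paper passes over.
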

\begin{proof} We denote the right-hand side of (\ref{TV-kernel}) by $P_{t}^{V}f(x)$ and set
$${\cal H}:=\{f\in B_{b}(M); \, T_{t}^{V}f(x)=P_{t}^{V}f(x), \, t>0, x\in M \}.$$
Repeating the same argument as in the proof of Lemma \ref{lem:ito-detailed}, we have
\eqref{TV-kernel} for $f\in C^{\infty}_{c}(M)$ (see also \cite[Theorem 3.2, Chapter V]{IW89}
for details). This means $C^{\infty}_{c}(M) \subset {\cal H}$.
If $\{f_{j}\}_{j=1}^{\infty}\subset {\cal H}$ satisfies $0\leq f_{j}(x) \nearrow f(x)$ for all $x\in M$ and some $f\in B_{b}(M)$, 
$T_{t}^{V}f_{j}(x) \nearrow T_{t}^{V}f(x)$ and $P_{t}^{V}f_{j}(x) \nearrow P_{t}^{V}f(x)$ hold for all $t>0$ and 
$x\in M$ by applying Lebesgue's monotone convergence theorem. Thus we have $f\in {\cal H}$.
By choosing a sequence $\{\chi_{j}\}_{j=1}^{\infty}\subset C^{\infty}_{c}(M)$ such that 
$0\leq \chi_{j}(x) \nearrow 1, x\in M$ as $j\to \infty$ and recalling $C^{\infty}_{c}(M) \subset {\cal H}$, we also have
$1\in {\cal H}$. Then the functional monotone class theorem (cf. \cite[Theorem 8.15]{Sch17}) implies that 
${\cal H}$ contains every bounded $\sigma(C^{\infty}_{c}(M))$-measurable function.
Since $\sigma(C^{\infty}_{c}(M))$ coincides with the Borel $\sigma$-algebra on $M$, 
this means ${\cal H}=B_{b}(M)$. Thus we have shown (\ref{TV-kernel}).

As in \cite[Section 5.4]{Hsu02}, we may rewrite $T_{t}^{V}f(x)$ as 
\begin{align*}
T_{t}^{V}f(x)&=\int_{M} {\mathbb E}_{x,y;t}\Big[ \exp \big(
-\int_{0}^{t}V(X_{s})ds \big) \Big] p(t,x,y) f(y) \mu(dy).
\end{align*}
Then we have (\ref{TV-kernel2}) as a consequence of (\ref{TV-kernel}).
Moreover, it follows from $V\geq 0$ that
$0<{\mathbb E}_{x,y;t}\big[ \exp \big(
-\int_{0}^{t}V(X_{s})ds \big) \big] \leq 1$. Combining this with (\ref{TV-kernel2}), we obtain (\ref{HK-comparison}).
\end{proof}
\subsection{Proof of Theorem~\ref{thm:uniq}}
In this subsection, we give a proof of Theorem~\ref{thm:uniq}. Throughout this subsection, we always
assume that $(M,g,\mu)$ is stochastically complete. As preparation, we show the following two lemmas.
\begin{lm}\label{kernel-L1}
For a fixed $x\in M$, we define a function $K^{V}_{x}:(0,\infty) \to L^{1}(\mu)$ by
$K^{V}_{x}(t):=p^{V}(t,x,\cdot)$ for $t>0$. Then $K^{V}_{x}$ is continuous.
\end{lm}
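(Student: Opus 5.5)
The plan is to prove continuity of $t\mapsto p^V(t,x,\cdot)$ in $L^1(\mu)$ by a Scheffé-type argument. This combines three ingredients: pointwise continuity of the kernel in $t$, the domination $p^V\le p$ from \eqref{HK-comparison}, and convergence of the total masses.

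Fix $t_0>0$ and a sequence $t_n\to t_0$ with $t_n\in(t_0/2,2t_0)$.

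First, since $p^V(t,x,y)$ is a positive smooth function of $(t,x,y)$, we have $p^V(t_n,x,y)\to p^V(t_0,x,y)$ for every $y\in M$.

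Second, we need the masses $m(t):=\int_M p^V(t,x,y)\,\mu(dy)=T^V_t\mathbf 1(x)$ to be continuous in $t$, using \eqref{TV-kernel}. By the Feynman--Kac representation,
\[
m(t)=\mathbb E_x\Big[\exp\Big(-\int_0^t V(X_s)\,ds\Big)\Big].
\]
Stochastic completeness \eqref{SC-life} gives $\zeta=\infty$ almost surely. The paths are continuous and $V\ge0$ is continuous, so $t\mapsto\int_0^tV(X_s)\,ds$ is finite and continuous almost surely. Since the integrand is bounded by $1$, dominated convergence gives $m(t_n)\to m(t_0)$.

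The conclusion then follows from Scheffé's lemma, i.e. the Brezis--Lieb/Scheffé argument. The functions $(p^V(t_0,x,\cdot)-p^V(t_n,x,\cdot))^+$ are bounded by $p^V(t_0,x,\cdot)\in L^1(\mu)$ and tend to $0$ pointwise, so their integrals tend to $0$. Combined with $m(t_n)\to m(t_0)$, this yields $\|p^V(t_n,x,\cdot)-p^V(t_0,x,\cdot)\|_{L^1(\mu)}\to0$.

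An alternative route for the mass step avoids the process. One could dominate $p^V(t_n,x,y)\le p(t_n,x,y)$ and use stochastic completeness in the form \eqref{SC-kernel}, $\int p(t_n,x,y)\,\mu(dy)=1$. Applying the generalized dominated convergence theorem then requires $p(t_n,x,\cdot)\to p(t_0,x,\cdot)$ pointwise together with equality of the integrals, both of which hold.

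The main obstacle is the continuity of the total mass. Without stochastic completeness, mass could escape to infinity, and pointwise convergence alone would not give $L^1$ convergence. I would use the probabilistic argument as the primary route, since it relies only on \eqref{SC-life} and \eqref{TV-kernel}, and note that it fits with how stochastic completeness was used in Lemma \ref{lem:ito-detailed}.
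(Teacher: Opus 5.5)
Your proof is correct. Its main line differs from the paper's in how the convergence of masses is obtained.

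The paper applies Pratt's lemma directly to $f_j=|p^V(t_j,x,\cdot)-p^V(t,x,\cdot)|$, using the dominating sequence $g_j=p(t_j,x,\cdot)+p(t,x,\cdot)$. The comparison \eqref{HK-comparison} gives $f_j\le g_j$. Stochastic completeness in the kernel form \eqref{SC-kernel} makes $\int g_j\,d\mu=2$ constant. So no continuity of masses has to be proved at all. This is essentially your ``alternative route'', with Pratt applied to $|p^V(t_j)-p^V(t)|$ rather than to the masses.

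Your primary route dispenses with the heat kernel $p$ and the bound $p^V\le p$ altogether. Despite your opening sentence, it never uses \eqref{HK-comparison}. What it needs is:
\begin{itemize}
\item \eqref{TV-kernel} for $f\equiv\mathbf 1$;
\item non-explosion \eqref{SC-life};
\item continuity of paths.
\end{itemize}
These give continuity of $t\mapsto T^V_t\mathbf 1(x)=\mathbb E_x\bigl[e^{-\int_0^tV(X_s)\,ds}\bigr]$ by dominated convergence, and Scheffé's lemma finishes. Equivalently, your Scheffé step is Pratt's lemma with $g_n=p^V(t_n,x,\cdot)+p^V(t_0,x,\cdot)$, once mass continuity is known.

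The paper's version is a one-line analytic argument, but it leans on the comparison $p^V\le p$, which it derived earlier through the Brownian bridge representation \eqref{TV-kernel2}. Yours avoids that comparison at the cost of a short probabilistic dominated-convergence computation.
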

\begin{proof} We fix $t>0$ and $x\in M$ and take a sequence $(t_{j})_{j=1}^{\infty}$ in $(0,\infty)$
such that $t_{j}\to t$ as $j\to \infty$. We set 
$f_{j}(y):=\vert \, p^{V}(t_{j},x,y)-p^{V}(t,x,y) \vert$, $g_{j}(y)=p(t_{j},x,y)+p(t,x,y)$ and 
$g(y)=2p(t,x,y)$ for $y\in M$. The comparison bound \eqref{HK-comparison} gives
$$ f_{j}(y) \leq p^{V}(t_{j},x,y)+p^{V}(t,x,y) \leq g_{j}(y), \quad y\in M.$$
By stochastic completeness of $M$, we also have
$$\int_{M} g_{j}(y)\mu(dy)=2=\int_{M}g(y) \mu(dy), \quad j\in \mathbb N.
$$
Since $p^{V}$ and $p$ are smooth,
$\lim_{j\to \infty}f_{j}(y)=0$ and $\lim_{j\to \infty}g_{j}(y)=g(y)$. We therefore obtain
$\lim_{j\to \infty} \int_{M} f_{j}(y) \mu(dy)=0$ by
applying Pratt's lemma (cf. \cite[Problem 12.3]{Sch17}).
This means $\lim_{j\to \infty}\Vert K^{V}_{x}(t_{j})-K^{V}_{x}(t) \Vert_{L^{1}(\mu)}=0$, which completes the proof.
\end{proof}
\begin{lm}\label{lem:FK-sol}
Let $u_{0}:M\to [0,1]$ be a Borel measurable function and set
\begin{equation}\label{eq:FK-solution-at-zero}
u(0,x):=u_{0}(x),\qquad
u(t,x):=T^{V}_t u_{0}(x),\quad 0<t\leq T,\ x\in M.
\nonumber
\end{equation}
Then
$\pi^{u}:=\{\pi^{u}_{t}(dx):=u(t,x)\mu(dx) \}_{0\leq t\leq T}$
is a bounded weak solution to
\eqref{hdleq} in the sense of Definition~\ref{def:weak}.
\end{lm}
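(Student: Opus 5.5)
We need to check the three conditions of Definition~\ref{def:weak} for $\pi^{u}$, with $u(t,\cdot)=T^V_t u_0$ for $t>0$. By Lemma~\ref{FK-property}, $u(t,x)=\int_M p^V(t,x,y)u_0(y)\mu(dy)$. Condition (i) holds by definition. For condition (ii), $0\le u_0\le 1$ together with \eqref{HK-comparison} and stochastic completeness give $0\le u(t,x)\le \int_M p(t,x,y)\mu(dy)=1$, so $C=1$ works. It remains to prove vague continuity of $t\mapsto\pi^u_t$ on $[0,T]$ and the identity \eqref{weak heat}.

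\textbf{Weak equation.} The plan is to use symmetry and duality. Fix $\varphi\in C_c^\infty(M)$. By Fubini and the symmetry $p^V(t,x,y)=p^V(t,y,x)$,
\[
\langle \pi^u_t,\varphi\rangle=\int_M u_0(y)\,(T^V_t\varphi)(y)\,\mu(dy),\qquad t>0 .
\]
We first verify the pointwise identity $T^V_t\varphi(y)-\varphi(y)=\int_0^t T^V_s\big((\Delta_\mu-V)\varphi\big)(y)\,ds$. This follows from the time-dependent Dynkin formula \eqref{eq:intrinsic-time-dynkin} applied with $F(r,y)=\varphi(y)$, combined with the exponential weight $e^{-A_s}$ exactly as in the proof of Lemma~\ref{lem:ito-detailed}. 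Since $\varphi$ has compact support, the stopping at $\tau_R$ can be removed once $D_R\supset\operatorname{supp}\varphi$. Stochastic completeness, via $\tau_R\to\infty$, together with bounded convergence then yields the identity, because $(\Delta_\mu-V)\varphi$ is bounded.

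Next we integrate this identity against $u_0\,d\mu$. To justify Fubini, note that $|T^V_s\psi|\le\|\psi\|_\infty$, where $\psi:=(\Delta_\mu-V)\varphi$. However, $u_0\mu$ need not be a finite measure, so we first rewrite the integral using the symmetry again:
\[
\int u_0\,T^V_s\psi\,d\mu=\int \psi\,u(s,\cdot)\,d\mu .
\]
The right-hand side is an integral of a compactly supported bounded function, hence finite. The same issue arises on the left-hand side $\int u_0\,(T^V_t\varphi-\varphi)\,d\mu$. I expect this to be the main technical point: $T^V_t\varphi$ is not compactly supported, so the integrals have to be handled through the absolutely convergent form
\[
\iint u_0(y)\,p^V(s,y,x)\,|\psi(x)|\,\mu(dx)\,\mu(dy)\le \|\psi\|_{L^1(\mu)}<\infty,
\]
where the bound uses $u_0\le1$ and $\int p^V(s,\cdot,x)\,d\mu\le 1$. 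With this bound in hand, Tonelli and Fubini apply, and we obtain $\langle\pi^u_t,\varphi\rangle-\langle\pi^u_0,\varphi\rangle=\int_0^t\langle\pi^u_s,(\Delta_\mu-V)\varphi\rangle\,ds$.

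\textbf{Continuity.} For $t>0$, continuity of $t\mapsto\langle\pi^u_t,\varphi\rangle=\int\varphi(x)\int p^V(t,x,y)u_0(y)\,\mu(dy)\,\mu(dx)$ can be obtained in two ways. One is Lemma~\ref{kernel-L1} with dominated convergence in $x$ over the compact set $\operatorname{supp}\varphi$. Alternatively, and more simply, the integral identity just proved shows that $t\mapsto\langle\pi^u_t,\varphi\rangle$ is absolutely continuous on $[0,T]$, including at $t=0$, since the integrand is bounded by $\|\psi\|_{L^1(\mu)}$. Continuity for every $\varphi\in C_c^\infty(M)$, together with the uniform local mass bound $\pi^u_t(K)\le\mu(K)$, then extends to every $\varphi\in C_c(M)$ by uniform approximation. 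This gives $\pi^u\in C([0,T],\mathcal M_+(M))$, which completes the verification of Definition~\ref{def:weak}.
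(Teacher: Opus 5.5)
Your proposal is correct. For (i), (ii) and the weak equation \eqref{weak heat} it follows the paper's argument: you take the identity $T^V_t\varphi=\varphi+\int_0^tT^V_s\big((\Delta_\mu-V)\varphi\big)\,ds$ from the stopped Dynkin formula with the weight $e^{-A_s}$, integrate it against $u_0\,d\mu$, and move it onto $u(s,\cdot)$ by the symmetry of $p^V$. One wording point: the stopping at $\tau_R$ is removed by $\tau_R\to\infty$ and bounded convergence, not by $D_R\supset\supp\varphi$; the inclusion only spares you the cutoff $\chi_R$. Your Tonelli bound by $\|(\Delta_\mu-V)\varphi\|_{L^1(\mu)}$ makes explicit an integrability point that the paper passes over, since $u_0\mu$ need not be finite.

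The genuine difference is how you obtain vague continuity of $t\mapsto\pi^u_t$. The paper proves it first, by extending $\{T^V_t\}$ to a strongly continuous contraction semigroup on $L^1(\mu)$ (citing \cite{Shi97}). It then uses \eqref{TV-symmetric} to get $|\langle\pi^u_t,\varphi\rangle-\langle\pi^u_s,\varphi\rangle|\le\|T^V_t\varphi-T^V_s\varphi\|_{L^1(\mu)}$ for all $\varphi\in C_c(M)$ at once. This is short, but it leans on an external semigroup result and on identifying the probabilistic $T^V_t$ with the $L^1$-extension of $e^{-tH_V}$.

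You reverse the order and read continuity off the weak equation itself, mirroring the observation in Lemma~\ref{lem:equivweak} that \eqref{weak heat} makes $t\mapsto\langle\pi_t,\varphi\rangle$ absolutely continuous. For $\varphi\in C_c^\infty(M)$ this gives the Lipschitz bound $|\langle\pi^u_t,\varphi\rangle-\langle\pi^u_s,\varphi\rangle|\le|t-s|\,\|(\Delta_\mu-V)\varphi\|_{L^1(\mu)}$ on all of $[0,T]$, including at $t=0$. The local mass bound $\pi^u_t(K)\le\mu(K)$ then carries continuity to $C_c(M)$ by uniform approximation. Your route is more elementary and self-contained: it needs only $0\le u\le 1$ and the pointwise Feynman--Kac identity, and no $L^1$ semigroup theory.
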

\begin{proof}
First, we prove $\pi^{u}\in C([0,T], \mc M_+(M))$.
We use the convention $T_0^V={\rm{Id}}$.
Using 
\eqref{TV-kernel} and
the symmetry of the minimal Schr{\"{o}}dinger kernel function $p^{V}(t,x,y)=p^{V}(t,y,x)$, $t>0, x,y\in M$, we easily 
have
\begin{align}
\int_{M} \varphi(x) (T^{V}_{t}u_{0})(x) \, \mu(dx)=\int_{M} u_{0}(x)(T^{V}_{t}\varphi)(x) \, \mu(dx), \quad \varphi \in 
C_{c}(M),~t\geq 0.
\label{TV-symmetric}
\end{align}
Since $T_{t}^{V}f=P^{V}_{t}f=e^{-tH_{V}}f$ for $f\in C^{\infty}_{c}(M)$,
$\{T_{t}^{V}\}_{t\geq 0}$ is regarded as a symmetric strongly continuous 
contraction semigroup on $L^{2}(\mu)$. 
Moreover, it can be extended to an $L^{1}(\mu)$-strongly continuous contraction semigroup
(see e.g., \cite[Proposition 2.2]{Shi97} for the proof).
Combining this with (\ref{TV-symmetric}), we see
\begin{align}
\big \vert \langle \pi^{u}_{t}, \varphi \rangle
- \langle \pi^{u}_{s}, \varphi \rangle \big \vert
&=\Big \vert \int_{M} \big( T_{t}^{V}\varphi(x)-T_{s}^{V}\varphi(x) \big) u_{0}(x) \mu(dx) \Big \vert
\nonumber \\
&\leq \Vert T_{t}^{V}\varphi-T_{s}^{V}\varphi \Vert_{L^{1}(\mu)}
\Vert u_{0} \Vert_{L^{\infty}(\mu)} \to 0  ~~ \mbox{ as }
s\to t, \quad \varphi \in C_{c}(M),
\nonumber 
\end{align}
which means $\pi^{u}\in C([0,T], \mc M_+(M))$.

Since $0\leq T^{V}_{t}u_{0}(x) \leq 1$ holds for all $x\in M$ and $0\leq t \leq T$, 
we have shown (ii) in Definition \ref{def:weak}. Hence it remains to prove
(\ref{weak heat}). 
The same argument as in the proof of Lemma \ref{lem:ito-detailed} gives
\begin{equation}
T^{V}_{t}\varphi(x)=\varphi(x)-\int_{0}^{t} T^{V}_{s}(H_{V}\varphi)(x) \, ds, \quad t\geq 0, \, x\in M, \, \varphi\in 
C^{\infty}_{c}(M).
\label{TV-FK-Ito}
\end{equation}
Multiplying \eqref{TV-FK-Ito} by $u_0(x)$,
integrating with respect to $\mu(dx)$, and using \eqref{TV-symmetric}, we
obtain
\begin{align*}
\langle\pi_t^u,\varphi\rangle-\langle\pi_0^u,\varphi\rangle
&=-\int_0^t\int_Mu_0(x)T_s^V(H_V\varphi)(x)\,\mu(dx)\,ds\\
&=-\int_0^t\int_M(T_s^Vu_0)(x)H_V\varphi(x)\,\mu(dx)\,ds
=\int_0^t\langle\pi_s^u,\Delta_\mu\varphi-V\varphi\rangle\,ds,
\end{align*}
and hence condition \textup{(iii)} is satisfied.
We complete the proof.
\end{proof}
\begin{proof}[{\bfseries Proof of Theorem~\ref{thm:uniq}}]
Let $\pi_t(dx)=u(t,x)\mu(dx)$ and 
$\widetilde\pi_t(dx)=v(t,x)\mu(dx)$
be two bounded weak solutions with the same initial Borel measurable function
$u_0:M\to[0,1]$.  Let $C_u$ and $C_v$ be positive constants for which condition
\textup{(ii)} of Definition~\ref{def:weak} holds for $\pi$ and
$\widetilde\pi$, respectively.
We set $f(t,x):=u(t,x)-v(t,x)$ and
$C_f:=C_u+C_v$.
It follows from Lemma~\ref{lem:interior-smooth}
that $f \in C^\infty((0,T)\times M)$
and $\|f(t, \cdot) \|_{L^\infty (\mu)}\le C_{f}$ for any $0\leq t \leq T$.  
Moreover, $f$ satisfies $f(0,x)=0$, $x\in M$ and 
\begin{equation}\label{eq:w-eq}
\partial_t f(t,x) = \Delta_\mu f(t,x) - V(x)f(t,x), \qquad t>0,~x\in M.
\end{equation}

We now fix $t\in(0,T)$, $x\in M$ and $\varepsilon\in(0,t)$.  By
Lemma~\ref{lem:ito-detailed}, we have
\begin{equation}\label{eq:mart-rep}
f(t,x)=\E_x\!\left[\exp\!\Bigl(-\int_0^{t-\varepsilon}V(X_r)\,dr\Bigr)
f(\varepsilon,X_{t-\varepsilon})\right].
\end{equation}
We may rewrite \eqref{eq:mart-rep} as 
\begin{equation}\label{eq:FK-semigroup}
f(t,x) = \big(T^{V}_{t-\varepsilon}\,f(\varepsilon,\cdot)\big)(x)=\int_{M} p^{V}(t-\varepsilon, x,y) f(\varepsilon,y) \mu(dy).
\end{equation}

For fixed $t>0$ and $x\in M$, we set $K^{V}_{x}(t):=p^{V}(t,x,\cdot)\in L^1(\mu)$.  
Since $C_c(M)$ is dense in $L^1(\mu)$, for any small $\delta>0$, we may choose a function 
$\phi_\delta\in C_c(M)$ such that
$
\|
K^{V}_{x}(t)
-\phi_\delta\|_{L^1(\mu)}<\delta/2
$.
Moreover, by Lemma~\ref{kernel-L1}, there exists $\eta\in(0,t/2)$ such that
$
\big \| 
K^{V}_{x}(t-\varepsilon)-K^{V}_{x}(t)
\big \|_{L^1(\mu)}<\delta/2
$
for all $\varepsilon\in(0,\eta)$,
and therefore
\[
\|K^{V}_{x}(t-\varepsilon)
-\phi_\delta\|_{L^1(\mu)}<\delta
\qquad\text{for all }\varepsilon\in(0,\eta).
\]
Now, for $0<\varepsilon<\eta$, \eqref{eq:FK-semigroup} and the bound
$\|f(\varepsilon,\cdot)\|_{L^{\infty}(\mu)}\le C_{f}$ imply
\begin{align}
|f(t,x)|
&=
\left|\int_M p^{V}(t-\varepsilon,x,y)\,f(\varepsilon,y)\,\mu(dy)\right|
\notag\\
&\le
\left|\int_M \phi_\delta(y)\,f(\varepsilon,y)\,\mu(dy)\right|
+
\| 
K^{V}_{x}(t-\varepsilon)
-\phi_\delta\|_{L^1(\mu)}
\|f(\varepsilon,\cdot)\|_{L^\infty}
\notag\\
&\le
\left|\int_M \phi_\delta(y)\,f(\varepsilon,y)\,\mu(dy)\right|
+C_{f}\delta.
\label{eq:split}
\end{align}

Next, we show that the first term on the right-hand side of \eqref{eq:split} converges
to $0$ as $\varepsilon\searrow0$.
Recall that the signed measure $\pi^{f}_\varepsilon(dx):=f(\varepsilon,x)\,\mu(dx)$ satisfies
$\pi^{f}_\varepsilon(dx)=\pi_{\varepsilon}(dx)-{\tilde{\pi}}_{\varepsilon}(dx)$, where
$\pi, {\tilde{\pi}} \in C([0,T];\Mplus)$ with the same initial datum $u_0(x)\,\mu(dx)$.
By the definition of vague continuity, for every $\varphi\in C_c(M)$, we have
\begin{align*}
\int_M \varphi(x) \pi^{f}_\varepsilon (dx)
&= \int_M\varphi(x)\pi_\varepsilon (dx)-
\int_M\varphi(x){\tilde{\pi}}_\varepsilon (dx)
\nonumber \\
& \longrightarrow
\int_M\varphi(x)\,
\pi_{0} (dx)
-
\int_M\varphi(x)\,
{\tilde{\pi}}_{0} (dx)=0
\quad 
\mbox{ as } \varepsilon \searrow 0.
\end{align*}
In particular, taking $\varphi=\phi_\delta\in C_c(M)$, we obtain
\[
\lim_{\varepsilon \searrow 0} \int_M \phi_\delta(y)\,f(\varepsilon,y)\,\mu(dy)=0.
\]
Hence (\ref{eq:split}) yields
\[
|f(t,x)| \le 0 + C_{f}\delta, \qquad t\in (0,T), \, x\in M.
\]
Since $\delta>0$ is arbitrary, 
$f\equiv 0$ on $(0,T)\times M$. Moreover, combining this with $f(0,\cdot)=0$, 
we have $\pi_t=\widetilde\pi_t$ for all $0\leq t<T$.

It remains to show $\pi_T=\widetilde\pi_T$.  Let
$(t_n)_{n\geq1}\subset(0,T)$ satisfy $t_n\nearrow T$.  For every
$\varphi\in C_c(M)$, the vague continuity of both solutions $\pi$ and $\widetilde \pi$ and
$\pi_{t}={\widetilde \pi}_{t}$, $0\leq t<T$, 
imply
\begin{align*}
\langle\pi_T,\varphi\rangle
&=\lim_{n\to\infty}\langle\pi_{t_n},\varphi\rangle
 =\lim_{n\to\infty}\langle\widetilde\pi_{t_n},\varphi\rangle
 =\langle\widetilde\pi_T,\varphi\rangle.
\end{align*}
This means $\pi_T=\widetilde\pi_T$, and thus we have shown
$\pi_t=\widetilde\pi_t$ for all $0\leq t\leq T$.

Finally, by Lemma~\ref{lem:FK-sol}, the unique solution is given by
$\pi_{t}(dx)=(T^{V}_{t}u_{0})(x) \mu(dx)$, $0\leq t \leq T$.
This yields the representation formula
\eqref{explicit-representation} and we complete the proof.
\end{proof}
\vspace{2mm}
\noindent
{\textbf{Acknowledgments.}}~The first author was partially supported by JSPS Grant-in-Aid for Scientific Research (C)
No. 22K03280, No. 26K06833 and (S) No. 22H04942. The second author was partially supported by JSPS Grant-in-Aid for 
Scientific Research (C) No. 23K03155, 26K06833, (B) No. 23K20801 and 
the Fukuzawa Fund (Keio Gijuku Fukuzawa Memorial
Fund for Advancement of Education and Research) of Keio University and the visiting fellowships of Exeter College,
University of Oxford. The third author was partially supported by 
JSPS Grant-in-Aid for Scientific Research (C) No. 26K06832.
\vspace{2mm} \\
\noindent
{\bf Data availability.}
No data are associated with this article.
\vspace{5mm} \\
\noindent
{\large{\bf Declarations}}
\vspace{2mm} \\
\noindent
{\bf Conflict of interest.}
The authors declare that they have no conflicts of interest.


\begin{thebibliography}{99}
%
%
%
\bibitem[CG21]{CG21}
J.P. Chen and P. Gon\c{c}alves:
{\it{Asymptotic behavior of density in the boundary-driven exclusion process on the Sierpinski gasket}},
Math. Phys. Anal. Geom. {\bf{24}} (2021), pp. 1--65.
%
\bibitem[EK86]{EK86}
S.N. Ethier and T.G. Kurtz:
{\it{Markov Processes: Characterization and Convergence}},
Wiley Series in Probability and Mathematical Statistics,
John Wiley \& Sons, New York, 1986, x+534 pp.
%
\bibitem[Fag10]{Fag10}
A. Faggionato:
{\it{Hydrodynamic limit of symmetric exclusion processes in inhomogeneous media}},
{\tt{arXiv:1003.5521}} (2010), 8 pp.
%
\bibitem[Fun18]{Fun18}
T. Funaki:
{\it{Hydrodynamic limit for exclusion processes}},
Commun. Math. Stat. {\bf{6}} (2018), pp. 417--480.
%
\bibitem[GR20]{GR20}
B. van Ginkel and F. Redig:
{\it{Hydrodynamic limit of the symmetric exclusion process on a compact Riemannian manifold}},
J. Stat. Phys. {\bf{178}} (2020), pp. 75--116.
%
\bibitem[GR22]{GR22}
B. van Ginkel and F. Redig:
{\it{Equilibrium fluctuations for the symmetric exclusion process on a compact Riemannian manifold}},
Markov Proc. Relat. Fields {\bf{28}} (2022), pp. 29--51.
%
\bibitem[Gri06]{Gri06}
A. Grigor'yan:
{\it{Heat kernels on weighted manifolds and applications}},
in: The Ubiquitous Heat Kernel, Contemp. Math. {\bf{398}},
American Mathematical Society, Providence, RI, 2006, pp. 93--191.
%
\bibitem[Gri09]{Gri09}
A. Grigor'yan:
{\it{Heat Kernel and Analysis on Manifolds}},
AMS/IP Studies in Advanced Mathematics {\bf{47}},
American Mathematical Society, Providence, RI, 2009, xviii+482 pp.
%
\bibitem[Gua23]{Gua23}
Z. Guan:
{\it{Hydrodynamic limits of interacting particle systems on crystal lattices in periodic realizations}},
Electron. J. Probab. {\bf{28}} (2023), Paper No. 97, 30 pp.
%
\bibitem[Hsu02]{Hsu02}
E.P. Hsu:
{\it{Stochastic Analysis on Manifolds}},
Graduate Studies in Mathematics {\bf{38}},
American Mathematical Society, Providence, RI, 2002, xiv+281 pp.
%
\bibitem[IW89]{IW89}
N. Ikeda and S. Watanabe:
{\it{Stochastic Differential Equations and Diffusion Processes}}, 2nd ed.,
North-Holland Mathematical Library {\bf{24}}, North-Holland Publishing Co.,
Amsterdam; Kodansha, Ltd., Tokyo, 1989, xvi+555 pp.
%
\bibitem[IK24]{IK24}
S. Ishiwata and H. Kawabi:
{\it{A graph discretized approximation of semigroups for diffusion with drift and killing
on a complete Riemannian manifold}}, Math. Ann. {\bf{390}} (2024), pp. 2459--2495.
%
\bibitem[Jak86]{Jak86}
A. Jakubowski:
{\it{On the Skorokhod topology}},
Ann. Inst. Henri Poincar\'{e} Probab. Stat. {\bf{22}} (1986), pp. 263--285.
%
\bibitem[Jar09]{Jar09}
M. Jara: 
{\it{Hydrodynamic limit for a zero-range process in the Sierpinski gasket}},
Comm. Math. Phys. {\bf{288}} (2009), pp. 773--797.
%
\bibitem[Jar11]{Jar11}
M. Jara:
{\it{Hydrodynamic limit of the exclusion process in inhomogeneous media}},
in: Dynamics, Games and Science II, Springer Proc. Math. {\bf{2}},
Springer, Heidelberg, 2011, pp. 449--465.
%
\bibitem[JRV24]{JRV24}
J. Junn\'e, F. Redig and R. Versendaal:
{\it{Hydrodynamic limit of the symmetric exclusion process on complete Riemannian manifolds and 
principal bundles}},
{\tt{arXiv:2410.20167v2}} (2024), 28 pp.
%
\bibitem[Kal17]{Kal17}
O. Kallenberg:
{\it{Random Measures, Theory and Applications}},
Probability Theory and Stochastic Modelling {\bf{77}},
Springer, Cham, 2017, xiii+694 pp.
%
\bibitem[Kan85]{Kan85}
M. Kanai:
{\it{Rough isometries, and combinatorial approximations of geometries of non-compact
Riemannian manifolds}},
J. Math. Soc. Japan {\bf{37}} (1985), pp. 391--413.
%
\bibitem[KL99]{KL99}
C. Kipnis and C. Landim:
{\it{Scaling Limits of Interacting Particle Systems}},
Grundlehren der Mathematischen Wissenschaften {\bf{320}},
Springer-Verlag, Berlin, 1999, xvi+442 pp.
%
\bibitem[KS00]{KS00}
M. Kotani and T. Sunada: 
{\it{Albanese maps and off diagonal long time asymptotics for the heat kernel}}, 
Comm. Math. Phys. {\bf{209}} (2000), pp. 633--670.
%
\bibitem[Lig85]{Lig85}
T.M. Liggett:
{\it{Interacting Particle Systems}},
Grundlehren der Mathematischen Wissenschaften {\bf{276}},
Springer-Verlag, Berlin, 1985, xv+488 pp.
%
\bibitem[vMT26]{vMT26}
P. van Meurs and K. Tsunoda:
{\it{Hydrodynamic limit for Glauber--Kawasaki dynamics on the Sierpi\'{n}ski gasket}},
{\tt{arXiv:2602.19059}} (2026), 39 pp.
%
\bibitem[Sal01]{Sal01}
L. Saloff-Coste:
{\it{Probability on groups: random walks and invariant diffusions}}, 
Notices Amer. Math. Soc. {\bf{48}} (2001), pp. 968--977.
%
\bibitem[Sch17]{Sch17}
R.L. Schilling:
{\it{Measures, Integrals and Martingales}}, 2nd ed.,
Cambridge University Press, Cambridge, 2017, xvii+476 pp.
%
\bibitem[Sep08]{Sep08}
T. Sepp\"{a}l\"{a}inen:
{\it{Translation invariant exclusion processes}}, lecture notes,
available at {\url{https://people.math.wisc.edu/~tseppalainen/excl-book/ajo.pdf}},
2008, 223 pp.
%
\bibitem[Shi97]{Shi97}
I. Shigekawa:
{\it{$L^{p}$ contraction semigroups for vector valued functions}},
J. Funct. Anal. {\bf{147}} (1997), pp. 69--108.
%
\bibitem[Spo91]{Spo91}
H. Spohn:
{\it{Large Scale Dynamics of Interacting Particles}},
Springer, Berlin--Heidelberg, 1991, xi+342 pp.
%
\bibitem[Sun08]{Sun08}
T. Sunada: 
{\it{Discrete geometric analysis}}, 
in: Analysis on Graphs and its Applications, 
Proc. Sympos. Pure Math. {\bf{77}},
Amer. Math. Soc., Providence, RI, 2008, pp. 51--83.
%
\bibitem[Sun13]{Sun13}
T. Sunada:
{\it{Topological Crystallography: With a View Towards Discrete Geometric Analysis}},
Surveys and Tutorials in the Applied Mathematical Sciences {\bf{6}},
Springer, Tokyo, 2013, xii+229 pp.
%
\bibitem[Tan12]{Tan12}
R. Tanaka:
{\it{Hydrodynamic limit for weakly asymmetric simple exclusion processes in crystal lattices}},
Comm. Math. Phys. {\bf{315}} (2012), pp. 603--641.
%
\bibitem[Tre67]{Tre67}
F.~Tr\`{e}ves:
{\it{Topological Vector Spaces, Distributions and Kernels}},
Academic Press, New York, 1967, xvi+624 pp.
%
\bibitem[Woe05]{Woe05}
W. Woess:
{\it{Lamplighters, Diestel--Leader graphs, random walks, and harmonic functions}},
Combinatorics, Probability \& Computing {\bf{14}} (2005), pp. 415--433.
\end{thebibliography}
\end{document}